\documentclass[a4paper]{article}
\makeatletter

\newcommand{\Rmnum}[1]{\expandafter\@slowromancap\romannumeral #1@}
\makeatother
\usepackage{graphicx}
\usepackage{amsfonts}
\usepackage{amsmath}
\usepackage{amssymb}
\usepackage{fancyhdr}
\usepackage{titlesec}
\usepackage{indentfirst}
\usepackage{tikz}
\usepackage{tikz-cd}
\usepackage{booktabs}
\usepackage{verbatim}
\usepackage{color}
\usepackage{amsthm}
\usepackage{subfigure}
\usepackage{abstract}
\usepackage{multirow}
\usepackage{fullpage}
\usepackage{hyperref}
\usepackage[margin=0pt]{geometry}
\usepackage{lipsum}
\pagestyle{plain}
\usepackage{layout}
\usepackage{mathtools}
\usepackage[page,header]{appendix}
\usepackage{titletoc}
\usepackage{newfloat}
\usepackage{csquotes}
\usepackage{mathrsfs} 
\numberwithin{equation}{section}
\newtheorem{theorem}{Theorem}[section]
\newtheorem{lemma}[theorem]{Lemma}
\newtheorem{example}[theorem]{Example}

\newtheorem{question}[theorem]{Question}
\newtheorem{definition}[theorem]{Definition}
\newtheorem{remark}[theorem]{Remark}

\newtheorem{conjecture}{Conjecture}

\newcommand{\RNum}[1]{\uppercase\expandafter{\romannumeral #1\relax}}

\DeclareMathOperator{\Gal}{Gal}
\DeclareMathOperator{\ind}{ind}

\DeclareMathOperator{\Hom}{Hom}

\DeclareMathOperator{\Stab}{Stab}

\DeclareMathOperator{\Disc}{Disc}

\DeclareMathOperator{\Aut}{Aut}

\DeclareMathOperator{\Ker}{Ker}
\DeclareMathOperator{\ord}{ord}

\DeclareMathOperator{\Cl}{Cl}

\DeclareMathOperator{\Res}{Res}

\usepackage[OT2,T1]{fontenc}
\DeclareSymbolFont{cyrletters}{OT2}{wncyr}{m}{n}
\DeclareMathSymbol{\Sha}{\mathalpha}{cyrletters}{"58}

\newcommand{\charf}{\operatorname{char}}

\newcommand{\C}{\ensuremath{{\mathbb{C}}}}
\newcommand{\Z}{\ensuremath{{\mathbb{Z}}}}

\newcommand{\Q}{\ensuremath{{\mathbb{Q}}}}
\newcommand{\R}{\ensuremath{{\mathbb{R}}}}
\newcommand{\F}{\ensuremath{{\mathbb{F}}}}

\DeclareFloatingEnvironment[fileext=dia,within=section]{diagram}

\topmargin 0cm \oddsidemargin 0.5cm \evensidemargin 0.5cm
\textwidth 14.66cm \textheight 22.00cm

\headheight 0cm
\headsep 0cm

\usepackage{pdfsync}

\topmargin 0cm \oddsidemargin 0.5cm \evensidemargin 0.5cm
\textwidth 14.66cm \textheight 22.00cm

\headheight 0cm
\headsep 0cm

\begin{document}
\title{Counterexamples for T\"urkelli's Modification on Malle's Conjecture}

\author{Jiuya Wang}

\newcommand{\Addresses}{{
		\bigskip
		\footnotesize		
		Jiuya Wang, \textsc{Department of Mathematics, University of Georgia, GA 30602, USA
		}\par\nopagebreak
		\textit{E-mail address}: \texttt{jiuya.wang@uga.edu}	
	}}
\maketitle	
	\begin{abstract}
We give counterexamples for the modification on Malle's Conjecture given by T\"urkelli. T\"urkelli's modification on Malle's conjecture is inspired by an analogue of Malle's conjecture over a function field. As a consequence, our counterexamples demonstrate that the $b$ constant can differ between function fields and number fields. We also show that Kl\"uners' counterexamples give counterexamples for a natural extension of Malle's conjecture to counting number fields by product of ramified primes. We then propose a refined version of Malle's conjecture which implies a new conjectural value for the constant $b$ for number fields. 
	\end{abstract}
	
\bf Key words. \normalfont Malle's conjecture, roots of unity, cyclotomic extension, embedding problem
\pagenumbering{arabic}	

\section{Introduction}
\subsection{Malle's Conjecture}
It is a standard result in algebraic number theory that there are finitely many number fields with bounded discriminant. It is then natural to ask how many number fields there are with bounded discriminant. Malle \cite{Mal02, Mal04} gives a conjectural asymptotic answer for this question. For each number field $F/k$ with degree $n$, we define the permutation Galois group $\Gal(F/k)$ to be the image of $G_{k}$ in $S_n$ induced by $G_{k}$ action on embeddings of $F$ into $\bar{k}$. Given a transitive group $G\subset S_n$, we denote $N_k(G, X)$ to be the number of continuous surjections $\rho: G_k\to G$ where the associated degree $n$ number field $F$ fixed by $\Stab(1)\subset G$ has $\Disc(F/k)\le X$. The conjecture states:
\begin{conjecture}[Malle's Conjecture over Number Fields, \cite{Mal02,Mal04}]\label{conj:Malle}
	Given a number field $k$ and a transitive permutation group $G\subset S_n$. There exist positive constants $c(G,k)$, $a(G)\in \Z$ and $b_M(G,k)\in \Z$ such that 
	\begin{equation}
		N_k(G, X) \sim  c(G,k) X^{1/a(G)} \log^{b_M(G,k)-1} X.
	\end{equation}
\end{conjecture}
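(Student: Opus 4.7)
The statement is a conjecture rather than a theorem, and its full proof is a central open problem in arithmetic statistics; what one can propose is a reconstruction of the heuristic that fixes the constants $a(G)$ and $b_M(G,k)$, together with an outline of the approaches that promote the heuristic to a theorem in restricted cases. The key invariant is Malle's index $\ind(g) = n - \#\{\text{orbits of } g \text{ on } \{1,\dots,n\}\}$. For a prime $\fp$ of $k$ whose tame inertia in $F$ maps to $\sigma\in G$, the $\fp$-exponent of $\Disc(F/k)$ equals $\ind(\sigma)$. Summing over ramified primes shows that the dominant contribution to $N_k(G,X)$ comes from $G$-extensions whose ramification is essentially concentrated at a single small prime, which predicts the exponent $1/a(G)$ with $a(G) = \min_{g\neq e}\ind(g)$. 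Chebotarev density, applied to the residue class of the ramified prime, produces a logarithmic factor whose exponent is the number of $\Gal(\bar k/k)$-orbits, under the cyclotomic action on conjugacy classes, of minimum-index elements; this is precisely Malle's $b_M(G,k)$.

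The route from this heuristic to a proof splits into (i) a workable parametrization of $\Sur(G_k,G)$ by objects one can count by discriminant, and (ii) sharp sieving to isolate the main term. For abelian $G$, class field theory realizes $\Sur(G_k,G)$ as ray class characters, and a Tauberian analysis of the associated Dirichlet series yields the conjecture (M\"aki, Wright). For degree $3,4,5$ one uses Bhargava-style prehomogeneous parametrizations and geometry of numbers (Davenport--Heilbronn; Bhargava; Bhargava--Shankar--Tsimerman). For groups built from abelian pieces by a short exact sequence, one may attempt an inductive \emph{twisted} count in which the base field is first enlarged to trivialize the outer action, Malle's conjecture is invoked there, and the contributions are then reassembled via a Shintani-type unfolding, with careful bookkeeping of how discriminants transform under the base change.

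The principal obstacle is that neither ingredient extends past a handful of special cases. For generic nonsolvable or otherwise "wild" $G$ there is simply no known parametrization of all $G$-extensions of $k$, so step (i) fails outright. Even when $a(G)$ is provably correct, the exponent $b_M(G,k)$ is delicate: the action of $G_k$ on wild inertia and on the $m$-th roots of unity (for $m$ the order of a minimum-index element) can contribute orbits that Malle's original formula misses. This is exactly the phenomenon exploited by Kl\"uners' counterexamples, and, as foreshadowed by the abstract, is what allows the remainder of the paper to construct counterexamples to T\"urkelli's modification and to propose a refined prediction for $b$.
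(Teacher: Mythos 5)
The statement you were asked about is a conjecture, not a theorem, and the paper offers no proof of it --- indeed the surrounding discussion emphasizes that Conjecture \ref{conj:Malle} is \emph{false} in general for the constant $b_M(G,k)$, by Kl\"uners' counterexamples, and the paper's purpose is to analyze and refine the $b$-constant. You correctly recognize this, and your reconstruction of the heuristic fixing $a(G)=\ind(G)$ via the tame discriminant exponent, and $b_M(G,k)$ as the number of cyclotomic orbits on minimal-index conjugacy classes, matches the standard justification (and the paper's Section \ref{sec:b}). Your survey of the proved cases (abelian via class field theory, low-degree via prehomogeneous spaces, inductive/twisted counts for extensions built from abelian pieces) is accurate and consistent with the references the paper cites.

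One correction to your final paragraph: Kl\"uners' counterexamples do not arise from wild inertia or from orbits that the cyclotomic action ``misses'' --- Malle's formula already accounts correctly for the cyclotomic action on minimal-index classes. The failure comes from a different mechanism: for $G = C_3\wr C_2$, the subfamily of $G$-extensions containing the fixed intermediate cyclotomic field $\Q(\mu_3)$ is already larger than Malle's total prediction, because relative $C_3$-counting over $\Q(\mu_3)$ produces a higher power of $\log X$ (this is the ``big fiber'' phenomenon of Definition \ref{def:big-fiber}, and the self-inconsistency noted in Remark \ref{rmk:malle-inconsist}). Attributing the failure to wild ramification or to an incomplete orbit count would point you toward the wrong fix; the correct diagnosis --- intermediate cyclotomic subfields --- is precisely what motivates T\"urkelli's modification and the paper's refined Conjecture \ref{conj:refined-malle}.
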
 
\noindent Malle also gives a precise conjectural value for $a(G)$ in \cite{Mal02} and for $b_M(G,k)$ in \cite{Mal04}, see Section \ref{sec:b} for a precise description of Malle's proposed constants.  Here we use the subscript in $b_M(G,k)$ to distinguish it from the true value $b(G,k)$ for powers of $\log X$. 

Progress has been made towards this conjecture \cite{Wri89,DH71,DW88,Bha05,Bha10,Klu12,Kluthesis,CDyDO02,JW17,MTTW,BW08,BF,KP21,KWell,fouvry2021malle}. The integer $a(G)$ has been widely believed to be correct. In all cases towards Conjecture \ref{conj:Malle} where the asymptotic distribution for $N_k(G,X)$ is determined, the true value matches the conjectural $a(G)$. Conjecture \ref{conj:Malle} is also sometimes termed as \textit{strong Malle's Conjecture}, in contrast to the \textit{weak Malle's Conjecture} where $N_k(G, X)$ is predicted to be bounded between $X^{1/a(G)}$ and $X^{1/a(G)+\epsilon}$ asymptotically. Recently, work of Ellenberg-Tran-Westerland \cite{ETW} proves the upper bound in the weak Malle's conjecture for every permutation group $G$ over a global function field $k = \F_q(t)$ with large $q$ relatively prime to $|G|$, which gives a strong evidence towards the validity of $a(G)$. In \cite{KWell} Kl\"uners and the author show that the upper bound in weak Malle's conjecture over number fields is equivalent to the discriminant multiplicity conjecture, and the latter is proved for all nilpotent permutation groups. For solvable groups, the discriminant multiplicity conjecture is simply equivalent to $\ell$-torsion conjecture, and the latter is shown to be a consequence of a much weakened version of Cohen-Lenstra type heuristics \cite{pierce2019conjecture}. So we have firm belief in $a(G)$.

The integer $b(G,k)$ is more mysterious. In 2005, Kl\"uners \cite{Klu} gives counterexamples to Conjecture \ref{conj:Malle} by noticing that certain intermediate cyclotomic extensions can contribute larger exponents for $\log X$ than Malle's prediction $b_M(G,k)$. Among these counterexamples of similar spirit, the most famous one is the wreath product $C_3\wr C_2$. Indeed the number of $C_3\wr C_2$-extensions containing the cyclotomic field $\Q(\mu_3)$ already contributes higher powers of $\log X$ than $b_M(G,\Q)$. Essentially, due to the presence of intermediate cyclotomic extensions, one can follow the same construction to show that Conjecture \ref{conj:Malle} is inconsistent with itself in general, without proving any distribution, see Remark \ref{rmk:malle-inconsist}. We do not discuss the constant $c(G,X)$ in this paper, please see recent preprint \cite{loughran2024malle} for discussions. 

\subsection{T\"urkelli's Modification: inspiration and comparison with function fields}
Like many problems in number theory, we can study the counterparts over global function fields. Conjecture \ref{conj:Malle} over global function fields can be stated in a similar way (see Remark \ref{rmk:theta-function-field} for explanations on why we do not conjecture an asymptotic distribution). We state the Malle's conjecture for global function fields, exactly as \cite[Conjecture 1.1]{turkelli2015connected} formulates it (we say $f(X) = \Theta (g(X))$ if $C_1 g(X) \le f(X) \le C_2 g(X)$ for sufficiently large $X$):
\begin{conjecture}[Malle's Conjecture over Function Field]\label{conj:Malle-function-field}
	Given a global function field $Q$ and a transitive permutation group $G\subset S_n$ with $(|G|, \charf(Q)) = 1$, define $a(G)$ and $b_M(G,Q)$ as in Conjecture \ref{conj:Malle}. Then
	\begin{equation}
		N_Q(G,X) = \Theta(X^{1/a(G)} \ln^{b_M(G,Q)-1} X).
	\end{equation}
\end{conjecture}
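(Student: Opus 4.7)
The plan is to recast the counting function $N_Q(G,X)$ for $Q=\F_q(t)$ as a point-count on Hurwitz stacks over $\F_q$. A surjection $\rho\colon G_Q\to G$ with prescribed tame ramification corresponds to a branched $G$-cover of $\P^1_{\F_q}$, so
\begin{equation*}
N_Q(G,X) \;=\; \sum_{\mathbf{c}}\, \bigl|\mathrm{Hur}_{G,\mathbf{c}}(\F_q)\bigr|,
\end{equation*}
where $\mathbf{c}$ runs over tuples of nontrivial conjugacy classes of $G$ (the ramification profile at each branch point), subject to the discriminant constraint $\sum_i \ind(c_i)\deg(P_i)\le \log_q X$. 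By Grothendieck--Lefschetz, each term is an alternating Frobenius trace on $H^\ast_c(\mathrm{Hur}_{G,\mathbf{c},\bar{\F}_q},\Q_\ell)$, so Conjecture \ref{conj:Malle-function-field} reduces to a statement about the $\ell$-adic cohomology of these Hurwitz stacks.

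First I would isolate the top-order ramification profiles: these are tuples supported on conjugacy classes $c$ achieving the minimal index $\ind(c)=|G|/a(G)$, and the number of $G$-orbits on such classes should control $b_M(G,Q)$. A direct combinatorial count of such profiles with bounded total degree yields the anticipated factor $X^{1/a(G)}\ln^{b_M(G,Q)-1}X$, so the conjecture reduces to showing that each Hurwitz component contributes on the order of $q^{\dim \mathrm{Hur}_{G,\mathbf{c}}}$ many $\F_q$-points, uniformly in the number of branch points.

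For the upper bound this is feasible via Lang--Weil combined with a polynomial bound on the number of geometric components of $\mathrm{Hur}_{G,\mathbf{c}}$ in the spirit of Conway--Parker; this is essentially the route carried out in \cite{ETW}. The matching lower bound is the main obstacle. One needs top compactly supported cohomology $H^{2\dim}_c$ to have positive rank with a Frobenius eigenvalue of weight $2\dim$ on a sufficiently large subspace, so that it is not cancelled by lower-weight contributions in the Lefschetz trace. Homological stability results in the spirit of Ellenberg--Venkatesh--Westerland only control the cohomology in a stable range, and outside that range lower-degree terms could in principle overwhelm the main term; ruling this out appears to require either a purity statement for $H^\ast_c(\mathrm{Hur}_{G,\mathbf{c}})$ or an averaging argument over branch loci. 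I expect this step to remain open in full generality, so the proposal rests on the reduction above together with the identification of purity (or stable-range concentration) as the key missing input — and it is precisely the mismatch of this function-field picture with the arithmetic of roots of unity in the number-field case that motivates the counterexamples to come.
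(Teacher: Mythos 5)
This statement is a \emph{conjecture} in the paper, not a theorem: the paper offers no proof of it, and in fact explicitly asserts that it is false. The introduction notes that ``Kl\"uners' counterexamples also hold over global function fields if we allow constant extensions contained in $G$-extensions,'' and Lemma \ref{lem:counting-function-field-Cl-wr-Cd} proves that for $G = C_{\ell}\wr C_{d}$ one has $b(G,\F_q(t)) = b_T(G,\F_q(t)) = \gcd(d,\ell-1)$, whereas $b_M(G,\F_q(t)) = 1$. So no proof strategy can succeed, and your proposal should be compared against a disproof, not a proof.

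The concrete gap in your sketch is precisely at the point where you assert that ``the number of $G$-orbits on [minimal-index] classes should control $b_M(G,Q)$'' and that a ``direct combinatorial count of such profiles'' yields the factor $\ln^{b_M(G,Q)-1}X$. This count only captures the Hurwitz components that are geometrically connected. Components that are irreducible over $\F_q$ but not geometrically connected --- equivalently, $G$-covers of $\P^1_{\F_q}$ whose function field contains a nontrivial constant extension $\F_{q^r}$ --- contribute additional main terms of the same order $X^{1/a(G)}$ but with a possibly larger power of $\ln X$. Accounting for these is exactly T\"urkelli's modification (Definition \ref{def:tb}), and in the wreath-product examples above they dominate: the covers containing the constant subextension corresponding to $\F_q(t)(\mu_\ell)$ already produce $\ln^{\gcd(d,\ell-1)-1}X$ growth. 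Your reduction to purity or stable-range concentration for $H^\ast_c$ of Hurwitz stacks addresses the genuine difficulty in proving a lower bound of the \emph{correct} order (and is indeed the route of \cite{ETW} and \cite{ellenberg2005counting} for the upper bound), but even granting that input, the exponent you would obtain is $b_T(G,Q)$, not $b_M(G,Q)$. (A minor additional slip: minimal-index classes satisfy $\ind(c)=a(G)$, not $|G|/a(G)$.)
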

\noindent Kl\"uners' counterexamples also hold over global function fields if we allow constant extensions contained in $G$-extensions. 

In order to accommodate these counterexamples, T\"urkelli in \cite{turkelli2015connected} gives a modification of Conjecture \ref{conj:Malle} by proposing a new $b$-constant $b_T(G,Q)$ for both function fields and number fields, see Definition \ref{def:tb} in Section \ref{sec:tb} for the description. 
\begin{conjecture}[T\"urkelli's Modification \cite{turkelli2015connected}]\label{conj:T\"urkelli}
	Given a transitive permutation group $G\subset S_n$ and a global field $Q$ with $(|G|, \charf(Q))=1$, 
	\begin{equation}
		b(G, Q) = b_T(G,Q).
	\end{equation}
\end{conjecture}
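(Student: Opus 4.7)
The plan is to establish matching upper and lower bounds $N_Q(G,X) = \Theta(X^{1/a(G)} \log^{b_T(G,Q)-1} X)$ by a careful accounting of the $G_Q$-action on the set of minimal-index conjugacy classes of $G$. The first preparatory step is to repackage Definition \ref{def:tb}: $b_T(G,Q)$ should be presented as the number of orbits of a natural $G_Q$-action on minimal-index conjugacy classes, with cyclotomic twisting built in, i.e.\ $\sigma \in G_Q$ acts on a class $c$ by $\sigma \cdot c = \sigma(c)^{\chi_{\mathrm{cyc}}(\sigma)^{-1}}$ (the action that makes the local tame inertia type at a prime invariant under Frobenius). This formulation is what allows $b_T$ to be compared with Malle's $b_M$, since $b_M$ arises from the orbit count of the plain $G_Q$-conjugation action without the cyclotomic twist.

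Second, over a global function field $Q = \F_q(t)$ with $(q,|G|)=1$, I would pursue the Hurwitz stack approach of Ellenberg-Tran-Westerland \cite{ETW}: $N_Q(G,X)$ is captured by $\F_q$-point counts on a Hurwitz stack $\mathrm{Hur}_G$ parametrizing tamely ramified $G$-covers of $\P^1_{\F_q}$. By a Conway-Parker type stabilization, the geometric irreducible components of the top-dimensional strata are indexed by minimal-index conjugacy classes of $G$, and the geometric Frobenius permutes them by exactly the cyclotomically-twisted Galois action that enters T\"urkelli's definition. The number of Frobenius-fixed components thus equals $b_T(G,Q)$, and combining this with the polynomial point count per component yields the predicted $X^{1/a(G)} \log^{b_T(G,Q)-1} X$ after summing a geometric series over the discriminant range.

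Third, for a number field $Q$ I would stratify $G$-extensions $F/Q$ by the intermediate cyclotomic subfield $F \cap Q^{\mathrm{cyc}}$, and within each stratum reduce by induction on $|G|$, exploiting normal subgroups and their quotients to drop to either smaller transitive groups or to the abelian case, where Wright's explicit asymptotic \cite{Wri89} is available. Careful bookkeeping should show that only strata corresponding to $G_Q$-orbits counted by $b_T(G,Q)$ contribute to the leading asymptotic, each contributing a single $X^{1/a(G)}$ summand; the total count of contributing strata then produces the $\log^{b_T(G,Q)-1} X$ factor. A matching lower bound would come from solving the relevant embedding problem to produce enough $G$-extensions in each contributing stratum, extending the constructions underlying \cite{Bha05, Bha10, KWell} to arbitrary $G$.

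The main obstacle, and the step where I expect the argument to be genuinely delicate, is the inductive step over number fields when $G$ has a nontrivial abelian normal subgroup $A$ whose fixed field already contains a cyclotomic subfield $Q(\mu_m)$ with $m \mid |A|$. In that configuration, class field theory over $Q(\mu_m)$ produces extensions whose discriminant contributions must be matched exactly against the cyclotomically-twisted orbits that define $b_T$, and the local factor at primes dividing $m$ conflates inertia types that the function-field Frobenius keeps distinct. Resolving this requires a tight local-global principle for minimal-index inertia classes, or, failing that, a Tauberian analysis of the associated Dirichlet series that isolates the exact power of $\log X$; this is the crux on which the whole equality hinges.
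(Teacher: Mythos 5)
You are attempting to prove a statement that this paper \emph{disproves}. Conjecture \ref{conj:T\"urkelli} is stated only to be refuted: the paper's main result, Theorem \ref{thm:example-1}, exhibits an infinite family of counterexamples over $\Q$, the simplest being $G = C_3 \wr C_4 \subset S_{12}$ (Example \ref{exa:1}), for which $b_T(G,\Q) = 2$ but the true value is $b(G,\Q) = b_M(G,\Q) = 1$. So there is no proof to compare yours against, and no correct proof can exist for number fields.

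The gap in your argument sits exactly where you flagged the ``delicate'' step, but it is not a technical delicacy --- it is a genuine obstruction. In your third paragraph you stratify $G$-extensions of a number field by their intersection with $Q^{\mathrm{cyc}}$ and assert that ``a matching lower bound would come from solving the relevant embedding problem to produce enough $G$-extensions in each contributing stratum.'' That embedding problem (Question \ref{que:embedding-cyclotomic}) is frequently \emph{not} solvable over number fields: for $G = C_3 \wr C_4$ the stratum realizing $b_T = 2$ consists of $G$-extensions containing $\Q(\mu_3)$ inside their $C_4$-quotient, and a $C_4$-extension of $\Q$ cannot contain $\Q(\mu_3)$ (local obstructions at $3$ and at $\infty$: complex conjugation would have to be an order-$2$ element of $C_4$ surjecting onto $\Gal(\Q(\mu_3)/\Q)$, which is impossible). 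The stratum is empty, its conjectured contribution of $X \log X$ never materializes, and the maximum over the \emph{nonempty} strata is strictly smaller than $b_T$. The structural reason your function-field heuristic does not transfer is that $\Gal(Q^{\mathrm{cyc}}/Q) \cong \hat{\Z}$ is projective for a global function field, so every cyclotomic embedding problem is solvable there (Theorem \ref{thm:embedding-function-field}), whereas $\Gal(k^{\mathrm{cyc}}/k)$ is not projective for a number field $k$. This is precisely why the paper proposes the refined Conjecture \ref{conj:refined-malle}, which inserts the solvability of the embedding problem as an explicit hypothesis before assigning a stratum its $b(\pi,\phi)$; your outline, amended to discard the unsolvable strata, is essentially a heuristic for that refined conjecture rather than for T\"urkelli's.
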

It is based on an extension of Ellenberg-Venkatesh's heuristic argument, where $b(G,Q)$ is related to the number of geometrically connected components of Hurwitz spaces. In \cite{ellenberg2005counting} both $a(G)$ and $b_M(G,Q)$ are shown to match the true counting function when enumerating extensions without constant fields (under the heuristic). Similar proposals are also proposed in \cite{darda2023torsors}. T\"urkelli's new input is to consider $G$-extensions $L/Q$ with the fixed subfield $L^N$, for some normal subgroup $N\in G$ with its quotient $G/N$ cyclic, being exactly the maximal constant extensions contained in $G$. In \cite{turkelli2015connected}, T\"urkelli computed the number of rational components of certain Hurwitz spaces and used this to give an extension of the heuristics in \cite{ellenberg2005counting}, which leads to a conjectural distribution for such $L$, given explicity in \cite{turkelli2015connected}. It is then natural that on the function field side the modification is to take the sum over the finitely many possible constant extensions. To translate the problem to number fields, T\"urkelli considers the sum of his heuristic distribution over all subfields of $Q(\mu_{\infty})$ with Galois group a quotient of $G$.
 
In this paper, we give counterexamples for Conjecture \ref{conj:T\"urkelli} in Theorem \ref{thm:example-1}. We demonstrate the key idea via a simple example:
\begin{example}\label{exa:1}
	Let $G = C_{3}\wr C_4\subset S_{12}$ and $\gcd(q, |G|) =1$ and $q$ large enough compared to $G$. We have
	$$b_T(G, \Q) = b(G, \F_q(t)) = 2 ,\quad\quad  b(G, \Q) = b_M(G, \Q) = 1.$$
\end{example}
\noindent Replacing $C_2$ with $C_4$ in Kl\"uners' counterexample has forbidden the existence of the cyclotomic field $\Q(\mu_3)$ as an intermediate extension over $\Q$. We now make a couple of comments on this example. Firstly, it is probably surprising that in this example the prediction of Malle is correct whereas the modification of T\"urkelli is not! Secondly and more importantly, even when a number field and a global function field have the same relevant cyclotomic extension $\Gal(Q(\mu_{d})/Q)$ for $d$ determined by $G$, it can happen that the $b$-constants are different! Thirdly, when $\Gal(Q(\mu_{d})/Q)$ are the same, it seems that there exist more field extensions on the function field side. 

We now demonstrate the reasoning behind the example and the last comment more carefully. Given a finite group $G$ and $G/N$ a quotient group, the study of whether a particular $G/N$-extension can be embedded into a $G$-extension is called \emph{the embedding problem}. It is a study with rich history and theory, and historically played a central role in solving the inverse Galois problem for solvable groups. A particular embedding problem becomes necessary in studying $b(G,k)$ in Conjecture \ref{conj:Malle}: given a fixed cyclotomic extension, i.e., $k(\mu_{n})$ with $\Gal(k(\mu_n)/k)$ being a quotient of $G$, can $k(\mu_{n})$ be embedded into a $G$-extension. We formulate precisely the following question:
\begin{question}[Embedding Cyclotomic Extensions]\label{que:embedding-cyclotomic}
Let $Q$ be a global field. Given a surjective group homomorphism $\pi: G\to B$ and a cyclotomic $B$-extension $F/Q$ (equivalently a surjective continous group homomorphism $\phi: G_{Q} \to B$ that factors through $G_{Q}/G_F$), does there exist a surjective $\tilde{\phi}: G_Q \to G$ such that $\pi \circ\tilde{\phi}  = \phi$?
		\begin{center}\label{diag:embedding}
		\begin{tikzcd}	
			&  &  & G_Q \arrow[ld, "\tilde{\phi}" ', dashed]\arrow[d,"\phi",two heads] &  \\		
			0\arrow{r} & \Ker(\pi) \arrow{r} & G \arrow[r, "\pi"] & B \arrow{r} & 0 \\
		\end{tikzcd}
	\end{center}
\end{question}
\noindent In Example \ref{exa:1}, we can easily see that a $C_4$-extension cannot contain $\Q(\mu_3)$ because one encounters local obstructions at both $p=3$ and $p=\infty$. It seems quite difficult to solve this problem in full generality. We will discuss some cases in Section \ref{sec:embed}, which suffices for proving the following theorem, giving an infinite family of examples where $b(G,\Q)$ is bounded between $b_T(G, \Q)$ and $b_M(G, \Q)$.  

\begin{theorem}\label{thm:example-1}
Let $\ell$ be an odd prime number and $d = \prod_i p_i^{r_i} \neq 2$ where $p_i$ are all prime numbers. Let $G = C_{\ell} \wr C_{d}\subset S_{\ell d}$ with $(|G|, \charf(Q)) = 1$, $\Gal(\F_q(t)(\mu_{\ell})/\F_q(t)) \simeq \Gal(\Q(\mu_{\ell})/\Q)$ and $q$ large enough compared to $G$. Denote $\gcd(d, \ell-1) = \prod_i p_i^{s_i}$. Let $s=\text{val}_2(\ell-1) -1$ when $\text{val}_2(d) > \text{val}_2(\ell-1)$ and $s = 0$ otherwise. Then
$$b_T(G, \Q) = b(G, \F_q(t)) =  \prod_i p_i^{s_i},  \quad b(G, \Q) = \prod_{i, r_i = s_i} p_i^{s_i} \cdot 2^s , \quad b_M(G, \Q) = 1.$$
As a consequence, there exists $G\subset S_n$ such that $\Gal(\F_q(t)(\mu_{\ell})/\F_q(t) )\simeq \Gal(\Q(\mu_{\ell})/\Q)$ and $b(G, \F_q(t)) > b(G, \Q)$.
\end{theorem}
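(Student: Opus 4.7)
The plan is to evaluate $a(G)$, $b_M(G,\Q)$, $b_T(G,\Q)$, $b(G,\F_q(t))$, and $b(G,\Q)$ in turn. First I would write a generic element of $G=C_\ell\wr C_d\subset S_{\ell d}$ as $((h_1,\ldots,h_d),\sigma)\in C_\ell^d\rtimes C_d$ acting on $[\ell]\times[d]$; a direct cycle count then gives $a(G)=\ell-1$, attained exactly by elements with $\sigma=e$ and a unique nontrivial coordinate $h_j\in C_\ell\setminus\{e\}$. Since $C_\ell$ is abelian and $C_d$-conjugation merely permutes positions, these split into $\ell-1$ conjugacy classes parameterized by $C_\ell\setminus\{e\}$. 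The absolute Galois group acts on them through $\Aut(C_\ell)=(\Z/\ell)^*$ by multiplication, which is transitive; under the hypothesis on $q$ this yields $b_M(G,\Q)=b_M(G,\F_q(t))=1$.

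Next I would apply T\"urkelli's recipe (Definition \ref{def:tb}) to compute $b_T(G,\Q)$ and $b(G,\F_q(t))$. The cyclic quotients of $G$ are $C_m$ for $m\mid d$, and the dominant contribution comes from the maximal quotient $G/C_\ell^d=C_d$. For a fixed $C_d$-subextension $K/Q$, counting $G$-extensions $L\supset K$ reduces via conductor--discriminant and Kummer theory to counting $C_d$-orbits of characters $G_K\to\mu_\ell$ under the twisted action combining the permutation action on $C_\ell^d$ with the cyclotomic character on $\mu_\ell$. The resulting exponent of $\log X$ works out to $\gcd(d,\ell-1)=\prod_i p_i^{s_i}$. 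Over $\F_q(t)$ the constant extension $\F_{q^d}(t)/\F_q(t)$ realizes this, so $b(G,\F_q(t))=\prod_i p_i^{s_i}$; over $\Q$ the same exponent is what T\"urkelli's heuristic formally predicts (since all cyclotomic $C_m$-subfields are formally admitted), so $b_T(G,\Q)=\prod_i p_i^{s_i}$.

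For $b(G,\Q)$ only those cyclic cyclotomic extensions of $\Q$ that actually arise as $L^N$ for an honest $G$-extension $L/\Q$ contribute, so the embedding problem of Question \ref{que:embedding-cyclotomic} intervenes. I would decompose $d=\prod_i p_i^{r_i}$ and analyze the embedding $G\twoheadrightarrow C_{p_i^{r_i}}$ prime by prime, using Section \ref{sec:embed}. For odd $p_i$, the local analysis shows the $p_i$-contribution survives exactly when $r_i=s_i$, giving the factor $p_i^{s_i}$, and vanishes otherwise. For $p_i=2$ additional local obstructions at $\infty$ and at $\ell$ --- the same mechanism that blocks $\Q(\mu_3)\subset$ a $C_4$-extension in Example \ref{exa:1} --- cut the $2$-part down to $2^s$. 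Combining yields
\[
b(G,\Q)=\prod_{i,\,r_i=s_i}p_i^{s_i}\cdot 2^s,
\]
and the strict inequality $b(G,\F_q(t))>b(G,\Q)$ follows whenever some $p_i\mid d$ is excluded, e.g.\ $\ell=3,\,d=4$ as in Example \ref{exa:1}.

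The main obstacle is the embedding-problem input to the last step: the prime-by-prime local analysis, and in particular the $2$-adic/archimedean obstruction that produces the $2^s$ correction, is the technical heart of the argument and is precisely what separates the number-field answer from the function-field answer. Everything else --- the minimum-index computation, the orbit counts, and the translation of T\"urkelli's heuristic into an explicit log-exponent --- is a routine unraveling once the admissible cyclotomic subfields have been identified.
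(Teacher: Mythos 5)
Your route is the same as the paper's: compute $b_M$ from the wreath-product structure, compute $b_T$ by reducing to the maximal cyclotomic $\phi$ into the $C_d$-quotient, obtain the true $b$-values by reducing to $\max_F b(C_\ell,F)$ over $C_d$-extensions $F$ together with Wright's theorem $b(C_\ell,F)=[F\cap Q(\mu_\ell):Q]$, and separate $\Q$ from $\F_q(t)$ via solvability of the cyclotomic embedding problem (local obstructions at $\ell$ and $\infty$ for odd $p_i$ versus $p_i=2$, exactly as in the paper's Example \ref{example:embedding} and Theorem \ref{thm:embedding-number-field}). The group-theoretic and embedding-theoretic parts of your outline are correct.

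The genuine gap is that you classify the counting step --- the identity $b(G,\cdot)=\max_F b(C_\ell,F)$ --- as ``routine unraveling,'' when it is in fact the main analytic content and cannot be extracted from the heuristic orbit count. A Kummer-theoretic orbit computation over a \emph{fixed} $F$ only yields a lower bound for $b(G,\cdot)$; the matching upper bound requires summing $N_F(C_\ell, X/\Disc(F)^{\ell})$ over \emph{all} $C_d$-extensions $F$ with uniform control in $F$. Over $\Q$ the paper imports this from the inductive theorem of Alberts--Lemke Oliver--Wang--Wood, whose hypothesis $\tfrac12+\tfrac{p}{d(p-1)}<\tfrac{\ell}{\ell-1}$ is precisely why $d=2$ is excluded from the statement --- a restriction your argument never accounts for. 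Over $\F_q(t)$ no such theorem is available off the shelf: the paper must (i) upgrade Wright's weighted partial sums to genuine $\Theta$-asymptotics (Theorem \ref{thm:abelian-function-field}), (ii) prove a genus-uniform bound $N_F(C_\ell,X)=O(C^{g(F)})X^{1/a}\ln^{b-1}X$ via the trivial $\ell^{2g}$ bound on class-group torsion and a contour-shift in the zeta polynomial, and (iii) verify that $\sum_F C^{g(F)}q^{-\ell(2g-2)}$ converges, which is where the hypothesis that $q$ be large relative to $G$ actually enters. Your proposal invokes neither of these inputs, so as written it establishes only the lower bounds for the true $b$-values, not the equalities claimed in the theorem.
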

\noindent We exclude $d=2$ only because $b(G, \Q)$ is not proved over number fields currently due to the lack of good $\ell$-torsion bound. One can construct more examples along this line, either in wreath products or non-wreath products. We don't try to expand in that direction here. Theorem \ref{thm:example-1} provides an infinite family of examples that are not complicated as groups, and also indicates the robustness in the comments we make after Example \ref{exa:1}. Therefore we make the following conjecture:
\begin{conjecture}\label{conj:function-field-larger}
Let $k$ be a number field. Then for any transitive permutation group $G$, we have
\begin{equation}
	b_M(G, k) \le b(G,k) \le b_T(G, k).
\end{equation}
\end{conjecture}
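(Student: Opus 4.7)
The plan is to split Conjecture \ref{conj:function-field-larger} into three tasks: the purely group-theoretic comparison $b_M(G,k) \le b_T(G,k)$, the lower bound $b_M(G,k) \le b(G,k)$, and the upper bound $b(G,k) \le b_T(G,k)$. The first should drop out directly from the definition of $b_T$ to be recalled in Section \ref{sec:tb}: Türkelli's recipe is built by taking a maximum over cyclic quotients $B$ of $G$ together with cyclotomic $B$-extensions of $k$, and the trivial quotient $B=1$ already recovers Malle's original $b_M(G,k)$. Concretely I would verify that for every admissible $B$ the number of $\Gal(k(\mu_\infty)/k)$-orbits on minimal-index conjugacy classes relevant to Türkelli's count is at least the corresponding number for the trivial quotient, which is a finite check at the group-theoretic level.

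For the lower bound $b_M(G,k) \le b(G,k)$, I would directly construct $\gg X^{1/a(G)}\log^{b_M(G,k)-1}X$ surjections $G_k \twoheadrightarrow G$ by ramifying at $b_M(G,k)$-many distinct tame primes whose inertia generators realize minimal-index classes from distinct $\Gal(k(\mu_\infty)/k)$-orbits. This is the standard Malle lower-bound recipe: once the primes are chosen up to norm $X^{1/a(G)}$, multinomial counting produces the prescribed power of $\log X$, and it remains only to show that a positive proportion of these local choices glue to honest global surjections of the correct discriminant. Such constructions already appear in \cite{Klu} and in subsequent work on lower bounds towards Malle's conjecture, so the proposal is simply to package them as a uniform lower bound; in this direction one does not need to resolve strong Malle.

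The main obstacle is the upper bound $b(G,k) \le b_T(G,k)$. The natural strategy is to stratify
\[
N_k(G,X) \;=\; \sum_{\substack{N \trianglelefteq G \\ G/N \text{ cyclic}}} \ \sum_{\phi : G_k \twoheadrightarrow G/N} N_k(G, X \mid \phi),
\]
where $\phi$ ranges over homomorphisms factoring through $\Gal(k(\mu_\infty)/k)$ and $N_k(G, X \mid \phi)$ counts surjective lifts of $\phi$ to $G$. Matching Türkelli's prediction requires a Malle-type fiber bound of the shape $N_k(G, X \mid \phi) \ll X^{1/a(G)}\log^{\beta(\phi)-1}X$, with $\beta(\phi)$ equal to the $\phi$-contribution in Türkelli's recipe, after which summing over $\phi$ and $N$ reconstructs the $\log^{b_T(G,k)-1}X$ factor. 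Such conditional upper bounds are essentially a relative version of weak Malle that is currently open: by the equivalence of \cite{KWell} they reduce to discriminant-multiplicity bounds for $\ker(G \to G/N)$-extensions over a fixed cyclotomic base, hence ultimately to twisted $\ell$-torsion bounds. A secondary subtlety is that, as Question \ref{que:embedding-cyclotomic} highlights, some $\phi$ may be obstructed and contribute nothing to $N_k(G,X)$ while still showing up in $b_T$; this is harmless for an upper bound, but one must make sure the fiber-wise Malle bound has the predicted shape uniformly in $\phi$ and does not pick up extra logarithms from the obstructed locus.
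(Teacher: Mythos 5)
The statement you are trying to prove is Conjecture \ref{conj:function-field-larger}, not a theorem: the paper offers no proof of it, explicitly lists it among the statements for which no proof (and no counterexample) is known, and supports it only by the evidence of the explicit family in Theorem \ref{thm:example-1}. So there is no argument in the paper to compare yours against, and any complete ``proof'' would have to resolve problems that are wide open. Your decomposition into three inequalities is the right way to organize the question, and the first piece, $b_M(G,k)\le b_T(G,k)$, is indeed proved in the paper exactly as you describe (it is the Lemma following Theorem \ref{thm:bt-simple}: $b_M(G,Q)=b(\pi,\phi)$ for the trivial pair, and $b_T$ is a maximum over all pairs). That part is fine.

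The gaps are in the other two pieces, and they are not technical loose ends but open problems. For the lower bound $b_M(G,k)\le b(G,k)$, your ``standard recipe'' of ramifying at tame primes in prescribed minimal-index classes and gluing presupposes that one can realize $G$ as a Galois group over $k$ with prescribed local behavior at arbitrarily many primes; for general transitive $G$ this already contains the inverse Galois problem, and producing $\gg X^{1/a(G)}\log^{b_M-1}X$ such extensions is known only for restricted classes of groups (abelian, nilpotent, certain wreath and direct products), not in the generality you claim. The citation of \cite{Klu} covers specific wreath products, not a uniform construction. For the upper bound $b(G,k)\le b_T(G,k)$, you correctly identify that the fiberwise bound $N_k(G,X\mid\phi)\ll X^{1/a(G)}\log^{b(\pi,\phi)-1}X$ is a relative weak-Malle statement that is open; but note the situation is worse than ``reduces to twisted $\ell$-torsion bounds'': even the unconditional existence of the exponent $b(G,k)$ itself (i.e.\ that $N_k(G,X)$ has the shape $cX^{1/a}\log^{b-1}X$) is part of Malle's conjecture, so without first fixing a robust definition of $b(G,k)$ via $\limsup$/$\liminf$ of $\log\bigl(N_k(G,X)X^{-1/a(G)}\bigr)/\log\log X$ the inequality does not even have an unconditional meaning. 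Your write-up should present this as a conjecture with a proof strategy and identified obstructions, not as a proof.
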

It is worth mentioning that at this moment, the issue from the embedding problem in T\"urkelli's modification seems to only exist for number fields. We are not aware of any counterexamples over function fields, either for Conjecture \ref{conj:T\"urkelli} or Problem \ref{que:embedding-cyclotomic}. Given Theorem \ref{thm:embedding-function-field}, it seems to be plausible to expect we don't have such failures for Problem \ref{que:embedding-cyclotomic} over function field. Though a positive answer in full generality seems to be difficult to prove.  

\subsection{General Invariants}
Malle's conjecture is also interesting because of its connection to other asymptotic questions. In fact, field enumeration naturally occurs when one studies  statistical questions for all $G$-extensions as a family. For example, it is exactly the denominator appearing in Cohen-Lenstra type heuristics. Choosing the ordering for field enumeration is important from this perspective. It has been noticed that ordering fields by discriminants does not always produce the predicted average number from Cohen-Martinet heuristics , e.g. $G = \Z/4\Z$ \cite{bartel2020class}. In fact, this phenomenon already exists for $G = D_4$ in \cite{CDyDO02} and Kl\"uners' examples of wreath products \cite{Klu}. To avoid such problems, people have worked in the past with conductor or product of ramified primes \cite{Woo10a,D4VS}. It is suggested first in \cite{Woo10a} to use the product of ramified primes as the counting invariant and it is conjectured by \cite{bartel2020class} that it is always good for the purpose of Cohen-Lenstra heuristics, and is actually used for proving versions of Cohen-Lenstra heuristics over function fields for general Galois groups. With notation introduced in Section \ref{sec:b}, it naturally extends Conjecture \ref{conj:Malle}, Conjecture \ref{conj:Malle-function-field} and Conjecture \ref{conj:T\"urkelli} (with all counterexamples carried over):
\begin{conjecture}[Generalized Malle's Conjecture]\label{conj:Malle-general-invariant}
Given a global function field $Q$ and a transitive permutation group $G\subset S_n$ with $(|G|, \charf(Q)) = 1$. Let $\text{inv}$ be a counting invariant(see Definition \ref{def:invariants}). Then there exist positive constants $a(G^{\text{inv}}) \in \Z$ and $b(G^{\text{inv}},Q) \in \Z$ such that
	$$N_Q(G^{\text{inv}}, X) = \Theta( X^{1/a(G^{\text{inv}})} \ln^{b(G^{\text{inv}},Q)-1} X),$$
	where $b(G^{\text{inv}}, Q) = b_M(G^{\text{inv}}, Q)$ (replace $\Theta$ with $\sim$ when $Q$ is number field). Similarly $b_T(G^{\text{inv}})$ given in Section \ref{sec:b} is the analogue of T\"urkelli's modified constant.
\end{conjecture}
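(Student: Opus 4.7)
The plan is to mirror the established approaches to Malle's conjecture for the discriminant, replacing the discriminant weighting throughout by the invariant $\mathrm{inv}$. The first step is to unfold Definition \ref{def:invariants} so that $\mathrm{inv}$ is encoded as a nonnegative integer weight $w_{\mathrm{inv}}(C)$ on each conjugacy class $C$ of local inertia; this immediately suggests $a(G^{\mathrm{inv}}) = \min_{C\neq \{e\}} w_{\mathrm{inv}}(C)$ and $b_M(G^{\mathrm{inv}},Q)$ equal to the number of $\Gal(Q(\mu_\infty)/Q)$-orbits on conjugacy classes of minimum weight. This parallels Malle's original combinatorial recipe and specializes to the classical constants when $\mathrm{inv}$ is the discriminant; the analogous $b_T(G^{\mathrm{inv}})$ is defined by the same orbit-count applied to the cyclic constant subextensions appearing in T\"urkelli's prescription.

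Next I would handle the abelian case via class field theory. The Dirichlet series $\sum_{\phi\colon G_Q \twoheadrightarrow G} \mathrm{inv}(\phi)^{-s}$ is an Euler product whose local factors depend only on $w_{\mathrm{inv}}$, and a Wright-type Tauberian argument \cite{Wri89} furnishes the asymptotic with the predicted exponents $1/a(G^{\mathrm{inv}})$ and $b_M(G^{\mathrm{inv}},Q) - 1$. For solvable and nilpotent $G$ I would adapt the $\ell$-torsion and discriminant multiplicity framework of \cite{KWell} together with the wreath product techniques of \cite{Klu12, KP21}, replacing the discriminant exponent by $w_{\mathrm{inv}}$; the key lemma to establish in this generality is an invariant-weighted version of the discriminant multiplicity conjecture. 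For function fields, the Hurwitz space input of \cite{ETW} transports once the dimensions of the relevant connected components are recomputed with the new weighting, which is a purely combinatorial check that the relation between $w_{\mathrm{inv}}$ and the affine-variety dimension of the moduli space is preserved.

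The main obstacle is the same cyclotomic embedding phenomenon that drove Theorem \ref{thm:example-1}: for any nontrivial cyclic quotient $\pi\colon G \twoheadrightarrow B$, whether the cyclotomic $B$-extension $Q(\mu_d)/Q$ lifts to a $G$-extension of $Q$ still governs the leading term, and this obstruction is essentially invariant-independent. Consequently I expect the clean equality $b(G^{\mathrm{inv}},Q) = b_M(G^{\mathrm{inv}},Q)$ to hold only over function fields, while over number fields the correct statement is an invariant-weighted version of Conjecture \ref{conj:function-field-larger}, with the sharp $b(G^{\mathrm{inv}},Q)$ squeezed between $b_M(G^{\mathrm{inv}},Q)$ and $b_T(G^{\mathrm{inv}},Q)$. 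A complete proof therefore requires, in addition to the framework described above, a case-by-case resolution of Question \ref{que:embedding-cyclotomic} of exactly the character carried out in Theorem \ref{thm:example-1}; this is the step I expect to be genuinely hard and inherently group-theoretic rather than analytic.
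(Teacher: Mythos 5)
This statement is a \emph{conjecture} that the paper records precisely in order to refute it, so there is no ``proof'' to reproduce and your proposal is attempting to establish something the paper shows to be false. The Notation section states explicitly that Conjecture \ref{conj:Malle-general-invariant} has counterexamples, and Theorems \ref{thm:example-rad-1} and \ref{thm:example-rad-2} (via Lemmas \ref{lem:Cl-wr-Cl-1} and \ref{lem:Cl2-wr-Cl} and the subsequent field-counting lemma) exhibit them: for $G = C_\ell\wr C_{\ell-1}$ with $\ell\ge 5$ and $\mathrm{inv}=\mathrm{rad}$, one has $b(G^{\mathrm{rad}},\Q) > b_M(G^{\mathrm{rad}},\Q)$, contradicting the asserted equality $b(G^{\mathrm{inv}},Q)=b_M(G^{\mathrm{inv}},Q)$. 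The mechanism is the one you do not account for: the subfamily of $G$-extensions containing $\Q(\mu_\ell)$ is already too large. When counting by $\mathrm{rad}$, every nontrivial element of $N=C_\ell^{\,\ell-1}$ has minimal exponent, so $\mathcal{S}_{min}(N^{\mathrm{rad}})$ is all of $N\setminus\{e\}$ and the twisted orbit count $b(\pi,\phi)$ for $\phi$ cutting out $\Q(\mu_\ell)$ is of size roughly $\ell^{\ell-1}/(\ell-1)$, which dwarfs $b_M(G^{\mathrm{rad}},\Q)$. No amount of analytic machinery (Wright, $\ell$-torsion bounds, Hurwitz spaces) can rescue the equality; indeed, carrying out your own wreath-product induction would surface exactly this big fiber.

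Your closing paragraph misdiagnoses where the conjecture fails. The cyclotomic embedding obstruction of Question \ref{que:embedding-cyclotomic} is the reason $b_T$ can \emph{overcount} over number fields (Theorem \ref{thm:example-1}); it is irrelevant to why $b_M$ \emph{undercounts} here, since for wreath products the relevant cyclotomic field always embeds and the Kl\"uners mechanism applies unobstructed. You also assert that $b=b_M$ should hold cleanly over function fields, but the paper notes that Kl\"uners-type counterexamples persist over global function fields once constant extensions inside $G$-extensions are allowed, which is why Conjecture \ref{conj:Malle-function-field} itself is listed among the refuted statements. The paper's actual resolution is not to prove Conjecture \ref{conj:Malle-general-invariant} but to replace it by the refined Conjecture \ref{conj:refined-malle}, which stratifies $N_Q(G^{\mathrm{inv}},X)$ by the intersection with $Q(\mu_d)$ and assigns each stratum its own constant $b(\pi,\phi)$, subject to solvability of the embedding problem.
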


The reason for the aforementioned issue is very often concluded to be the following group theoretic reason. In the upcoming paper of the author with Alberts, Lemke-Oliver and Wood, we define \textit{concentrated} groups, to capture this feature:
\begin{definition}[Concentrated Group, \cite{ALOWW}]\label{def:concentrated-group}
	We say a transitive permutation group $G\subset S_n$ is \emph{concentrated}, or \emph{concentrated in $N$}, if there exists a proper normal subgroup $N$ of $G$ such that 
	\begin{equation}
			\langle s\mid \ind(s) = \min_{g\neq e \in G} \ind(g) \rangle \subset N,
	\end{equation}
	(see definition of $\ind(\cdot)$ in Definition \ref{def:ind}). More generally, given an counting invariant $\text{inv}$ (see Definition \ref{def:invariants}), we say \emph{$G$ is concentrated in $N$ with respect to $\text{inv}$} if $\langle s\mid \exp(s) = \min_{g\neq e \in G} \exp(g) \rangle \subset N$.
\end{definition}
\noindent If $G$ is concentrated in $N$ with respect to the discriminant or some other invariants, then the number of $G$-extensions with a fixed $G/N$-quotient is expected to have positive density among all $G$-extensions. In fact, this is exactly why the method in Alberts-Lemke-Oliver-Wang-Wood \cite{ALOWW} is so effective in proving many more cases of Malle's conjecture when $G$ is concentrated. We term this phenomena on the field counting side by
\begin{definition}[Big Fiber]\label{def:big-fiber}
	Let $\mathcal{F}(G^{\text{inv}}, X)$ be the number of $G$-extensions $K$ with $\text{inv}(K)\le X$. We say a field counting question $G^{\text{inv}}$ over base field $Q$ has a \emph{big fiber over $M$} if there exists a nontrivial field extension $M\neq Q$ such that 
	\begin{equation}
		\liminf_{X\to \infty} \frac{\sharp\{  K\in \mathcal{F}(G^{\text{inv}}, X)\mid M\subset \tilde{K}\}}{\sharp\{  K\in \mathcal{F}(G^{\text{inv}}, X)\}} >0.
	\end{equation}
\end{definition}

It is very tempting to imagine that $G$ being concentrated is equivalent to the existence of big fiber(s) in counting $G$-extensions (with respect to any invariant), or even stronger, that $G$ being non-concentrated is equivalent to guarantee a good constant like an Euler product, which suggests good independence among different $p$. Unfortunately, both hopes are not true. In fact, even for $G = S_3$, the field counting question $S_3^{\text{rad}}$ with product of ramified primes is already proven by \cite{shankar2024asymptotics} to have one big fiber over $\Q(\mu_3)$, even though $S_3$ is not concentrated with respect to the product of ramified primes, not to mention the constant being one Euler product! 

It is exactly for the same reason for Kl\"uners to get his counter examples and for $S_3^{\text{rad}}$ to have one big fiber. It is then very natural to push this reasoning further to get counterexamples for Conjecture \ref{conj:Malle-general-invariant} for general non-concentrated invariants. 

Denoting $b_M(G^{\text{rad}}, Q)$ to be the $b$-constant appearing in Conjecture \ref{conj:Malle-general-invariant} for counting $G$-extensions by the product of ramified primes, we show that actually the original counterexamples of Kl\"uners are already counterexamples for Conjecture \ref{conj:Malle-general-invariant}.
\begin{theorem}\label{thm:example-rad-1}
	Let $\ell$ be an odd prime and $G = C_{\ell} \wr C_{\ell-1}$. When $\ell\ge 5$, we have
	$$b(G^{\text{rad}}, \Q) > b_M(G^{\text{rad}}, \Q).$$
\end{theorem}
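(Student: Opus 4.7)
The plan is to adapt Klüners' original discriminant-counterexample construction \cite{Klu} to the radical invariant. Since $\Q(\mu_\ell)$ is a $C_{\ell-1}$-quotient of $G = C_\ell \wr C_{\ell-1}$, I would show that the subfamily of $G$-extensions $\tilde K/\Q$ containing $\Q(\mu_\ell)$, counted by $\text{rad}$, already contributes a power of $\log X$ strictly larger than Malle's prediction $b_M(G^{\text{rad}}, \Q) - 1$. The mechanism is the same as in the classical $C_3 \wr C_2$ discriminant counterexample, only with the weighting on ramified primes changed from $\text{disc}$ to $\text{rad}$.

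First I would compute $b_M(G^{\text{rad}}, \Q)$ from the definition in Section \ref{sec:b}. Since $\expo^{\text{rad}}(g) = 1$ for every $g \neq e$, this constant equals the number of $\Gal(\Q(\mu_{\expo(G)})/\Q)$-orbits on all nontrivial conjugacy classes of $G$. Using the semidirect decomposition $G = N \rtimes C_{\ell-1}$ with $N = C_\ell^{\ell-1}$ and $\expo(G) = \ell(\ell-1)$, the Galois power action on the $h = e$ sector factors through $(\Z/\ell)^*$ and acts by scalar multiplication on $N$, while $C_{\ell-1}$ acts by coordinate rotation; a Burnside calculation on this product action shows the $h = e$ sector contributes orbits whose leading count is $\ell^{\ell-1}/(\ell-1)^2$, and each of the remaining sectors ($h \neq e$) contributes at most $\ell^{\gcd(a,\ell-1)}$ classes, negligible compared to the main term.

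For the lower bound, any $G$-extension $\tilde K/\Q$ with $\tilde K^N = \Q(\mu_\ell)$ is the Galois closure over $\Q$ of a $C_\ell$-extension $M/F$, where $F = \Q(\mu_\ell)$. Since $F/\Q$ is ramified only at $\ell$, the rational primes ramified in $\tilde K$ coincide with those ramified in $M$, so $\text{rad}_\Q(\tilde K) \asymp \text{rad}_\Q(M)$, and the map $M \mapsto \tilde K$ is finite-to-one onto the generic subfamily $\Gal(\tilde K/\Q) = G$. By Kummer theory, $C_\ell$-extensions $M = F(\sqrt[\ell]{\alpha})$ correspond to $\alpha \in F^*/(F^*)^\ell$ modulo $\F_\ell^*$-scaling, with $\fp \nmid \ell$ ramified iff $v_\fp(\alpha) \not\equiv 0 \pmod \ell$. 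Grouping the primes of $F$ above each rational prime $p \neq \ell$ (which splits into $g(p) = (\ell-1)/\ord_\ell(p)$ primes), the Dirichlet series counting by $\text{rad}_\Q$ factors, modulo bounded corrections from $\co_F^*/(\co_F^*)^\ell$, $\Cl(F)/\Cl(F)^\ell$, and the factor at $\ell$, as
\[
Z(s) \;\sim\; \prod_{p \neq \ell} \left( 1 + \frac{\ell^{g(p)} - 1}{p^s} \right).
\]
Chebotarev gives the density of $p$ with $\ord_\ell(p) = f$ as $\phi(f)/(\ell-1)$, and a standard Tauberian argument then yields
\[
\sharp\{M : \text{rad}_\Q(M) \le X\} \;\asymp\; X \log^{A-1} X,
\qquad
A = \sum_{f \mid \ell-1} \frac{\phi(f)(\ell^{(\ell-1)/f} - 1)}{\ell-1}.
\]
The single term $f = 1$ alone contributes $(\ell^{\ell-1}-1)/(\ell-1)$, which exceeds the leading $\ell^{\ell-1}/(\ell-1)^2$ of $b_M$ by a factor of $\sim (\ell-1)$; hence $A > b_M(G^{\text{rad}}, \Q)$ for $\ell \geq 5$. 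A direct computation gives $A = 5 = b_M$ for $\ell = 3$, which is why the statement excludes that case.

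The main obstacle is controlling the global corrections in the Kummer count so that the naive Euler product $Z(s)$ really captures the correct exponent $A$: one must check that the constraint that the ideal $\prod \fp^{v_\fp(\alpha)}$ be trivial in $\Cl(F)/\Cl(F)^\ell$, together with the unit contribution from $\co_F^*/(\co_F^*)^\ell$, contribute only multiplicative constants rather than additional powers of $\log X$. This can be handled by a standard Selmer-group / $\ell$-rank argument using finiteness of $\Cl(F)[\ell]$ and $\co_F^*/(\co_F^*)^\ell$. A secondary verification is that the generic condition $\Gal(\tilde K/\Q) = G$ holds for a positive density of $M$, which reduces to the fact that a generic character $\chi \in \hat N$ has $C_{\ell-1}$-orbit spanning all of $\hat N$.
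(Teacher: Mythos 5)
Your proposal is correct and lands on the same numerical comparison as the paper, but the analytic half follows a different route. The group-theoretic half --- computing $b_M(G^{\text{rad}},\Q)$ by Burnside's lemma for the rotation-times-scalar action of $C_{\ell-1}\times(\Z/\ell\Z)^{\times}$ on $N\setminus\{e\}$, plus a crude bound on the classes outside $N$ --- is exactly the paper's Lemma \ref{lem:Cl-wr-Cl-1}, and your exponent $A=\frac{1}{\ell-1}\sum_{f\mid \ell-1}\phi(f)\bigl(\ell^{(\ell-1)/f}-1\bigr)$ is precisely the paper's $b(\pi,\phi)=\frac{1}{\ell-1}\sum_{r\in C_{\ell-1}}\bigl(\ell^{(\ell-1)/\ord(r)}-1\bigr)$ for $\phi$ cutting out $\Q(\mu_\ell)$. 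Where you diverge is the lower bound $b(G^{\text{rad}},\Q)\ge A$: the paper gets this in one line by citing the twisted-Malle results for abelian kernels \cite{alberts2021harmonic,alberts2022statistics}, which give the asymptotic $X\log^{b(\pi,\phi)-1}X$ for $G$-extensions containing $\Q(\mu_\ell)$ ordered by $\text{rad}$; you instead re-derive this special case by hand via Kummer theory over $F=\Q(\mu_\ell)$, the Euler product $\prod_{p\neq\ell}\bigl(1+(\ell^{g(p)}-1)p^{-s}\bigr)$, Chebotarev, and a Tauberian theorem. These are the same computation at bottom --- your Selmer/class-group/unit control is exactly the content of the cited harmonic-analysis results (or of Wright's method \cite{Wri89}) --- so your argument is more self-contained at the cost of re-proving a known black box. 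One quantitative point to tighten: for $\ell=5$ the classes outside $N$ are not negligible against the main term --- the coset of the order-$2$ element of $C_4$ alone carries $\ell^{(\ell-1)/2}=25$ conjugacy classes, versus a leading term of about $\ell^{\ell-1}/(\ell-1)^2=39$ --- so the asserted inequality $A>b_M(G^{\text{rad}},\Q)$ should be verified numerically at $\ell=5$ (it holds: $A=164$ against $b_M\le 79$) rather than deduced purely from the leading-order comparison; for $\ell\ge 7$ the asymptotic argument is safe.
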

\noindent In fact, the lower bound is exactly established from counting $G$-extensions containing the $\Q(\mu_{\ell})$. We also show the same phenomena to hold for other groups $G= C_{\ell} \wr C_m$ with $m| \ell-1$ among Kl\"uners' counterexamples, see Lemma \ref{lem:Cl-wr-Cl-1}.

We make a couple comments on these examples. Firstly, though it seems surprisingly simple, somehow it has been ignored by experts, see \cite{koymans2023malle}. Secondly as it can be seen, such a problem does not require subtle constraints on the underlying group structure of $G$. To indicate this in contrast to nilpotent examples in \cite{koymans2023malle}, we also give a convenient infinite family of nilpotent examples.
\begin{theorem}\label{thm:example-rad-2}
	Let $\ell$ be an odd prime and $G = C_{\ell^2} \wr C_{\ell}$. When $\ell\ge 3$, we have
	\begin{equation}
		b(G^{\text{rad}}, \Q) > b_M(G^{\text{rad}}, \Q).
	\end{equation}
\end{theorem}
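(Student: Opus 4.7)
The plan is to adapt the Kl\"uners-style strategy used in Theorem~\ref{thm:example-rad-1}: fix a cyclotomic $C_\ell$-subextension of $\Q$ and show that the partial count of $G$-extensions containing it already exceeds $b_M(G^{\text{rad}},\Q)$. Since $G = C_{\ell^2}\wr C_\ell$ has $C_\ell$ as a quotient (via projection onto the top factor), and $F := \Q(\mu_{\ell^2})^{C_{\ell-1}}$, the unique cyclic $C_\ell$-subfield of $\Q(\mu_{\ell^2})$, realizes this quotient cyclotomically and is ramified only at $\ell$, the first step is to count $G$-extensions $L/\Q$ with $L^N = F$, where $N = C_{\ell^2}^\ell$ denotes the wreath-product base.

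Since $N = \operatorname{Ind}_{1}^{C_\ell} C_{\ell^2}$ as a $C_\ell$-module, Shapiro's lemma (or a direct Frobenius reciprocity argument) identifies such extensions, up to a finite $\Aut(C_{\ell^2})$ factor and an $\ell$-to-one overcount from $C_\ell$-conjugates, with continuous surjective characters $\phi\colon G_F \to C_{\ell^2}$ whose $C_\ell$-conjugates collectively generate $N$; the non-generic $\phi$'s form a lower-order set and can be discarded. A rational prime $p\neq \ell$ is ramified in $L/\Q$ iff $\phi$ is nontrivial at some prime $\mathfrak{p}$ of $F$ above $p$, and class field theory over $F$ expresses the Dirichlet series $D(s) = \sum_L \text{rad}(L/\Q)^{-s}$ as an Euler product. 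Only two types of primes contribute to the pole at $s=1$: split primes $p$ with $\ell^2 \mid p-1$ (Chebotarev density $1/(\ell(\ell-1))$), contributing the local factor $1 + (\ell^{2\ell}-1) p^{-s}$ from the $\ell^{2\ell}-1$ nontrivial $\ell$-tuples of local $C_{\ell^2}$-characters at the $\ell$ primes above $p$; and inert primes $p$ with $\ell \mid p-1$ but $\ell^2 \nmid p-1$ (density $1/\ell$), for which $\ell^2 \mid p^\ell-1$ so the unique prime above admits $\ell^2-1$ nontrivial characters, yielding $1 + (\ell^2-1) p^{-s}$. A standard Selberg--Delange/Tauberian analysis then produces a pole of order
\[
b'(\ell) \;=\; \frac{\ell^{2\ell}-1}{\ell(\ell-1)} \;+\; \frac{\ell^2-1}{\ell} \;=\; \ell \;+\; \frac{\ell^{2\ell-1}-1}{\ell-1},
\]
giving the lower bound $b(G^{\text{rad}},\Q) \ge b'(\ell)$.

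To conclude, I would bound $b_M(G^{\text{rad}},\Q)$ from above. This Malle constant equals the number of $\Gal(\Q(\mu_{|G|})/\Q)$-orbits on nontrivial conjugacy classes of $G$; a direct wreath-product analysis, using that $\exp(G) = \ell^3$ and hence that the Galois action factors through $(\Z/\ell^3)^\ast$ of order $\ell^2(\ell-1)$, shows the dominant contribution comes from generic $j=0$ classes with trivial stabilizer, yielding $b_M(G^{\text{rad}},\Q) \sim \ell^{2\ell-2}/(\ell-1)$. Since $b'(\ell) \sim \ell^{2\ell-2}$ exceeds this by a factor of $\ell-1 \ge 2$, the strict inequality $b'(\ell) > b_M(G^{\text{rad}},\Q)$ holds for all $\ell \ge 3$; direct computation at $\ell = 3$ confirms $b'(3) = 124$ while $b_M(G^{\text{rad}},\Q) = 51$ via Burnside applied to the 266 nontrivial conjugacy classes of $C_9\wr C_3$.

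The main technical obstacle is the Tauberian step: upgrading the Dirichlet-series pole order of the restricted series into a genuine lower bound on $\#\{L : \text{rad}(L/\Q)\le X\}$. This requires controlling the bounded local factors at $\ell$ and the ``non-generic'' $\phi$'s whose induced map $G_F\to N$ lands in a proper $C_\ell$-submodule; both contribute only lower-order terms and can be handled by standard Selberg--Delange techniques familiar from Wood's work on counting abelian extensions by product of ramified primes.
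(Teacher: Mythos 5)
Your proposal is correct and follows the same overall strategy as the paper: lower-bound $b(G^{\text{rad}},\Q)$ by counting only the $G$-extensions whose $C_\ell$-quotient field is the degree-$\ell$ subfield $F$ of $\Q(\mu_{\ell^2})$, then compare with an upper bound for $b_M(G^{\text{rad}},\Q)$. The execution of the lower bound is genuinely different, though. The paper phrases the sub-count as the twisted Malle problem $N_{\Q,\pi,\phi}(G^{\text{rad}},X)$ with abelian kernel $N=C_{\ell^2}^\ell$, quotes Alberts and Alberts--O'Dorney for the asymptotic with exponent $b(\pi,\phi)$, and then bounds $b(\pi,\phi)\ge\frac{\ell^{2\ell}-1}{\ell(\ell-1)}$ by keeping only the identity term in Burnside's lemma. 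You instead compute the pole order of the class-field-theoretic Dirichlet series over $F$ directly, and your value $b'(\ell)=\frac{\ell^{2\ell}-1}{\ell(\ell-1)}+\frac{\ell^2-1}{\ell}$ is in fact the \emph{exact} value of $b(\pi,\phi)$: the second term is precisely the Burnside contribution of the $\ell-1$ nonidentity elements $(r,s)$ of $G(\pi,\phi)$ with $s\equiv 1\bmod \ell$, each fixing the $\ell^2-1$ "geometric progressions" $(g_0,sg_0,\dots)$ in $N\setminus\{e\}$. Your check at $\ell=3$ ($b_M=46+5=51$ versus $b'=124$) is correct. Two small points to repair in a full write-up. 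First, the surjectivity and Tauberian issues you flag at the end are exactly what the cited Alberts--O'Dorney machinery handles, so you could simply quote it as the paper does rather than redo Selberg--Delange by hand. Second, your asymptotic comparison has a slip: $b'(\ell)\sim\ell^{2\ell-1}/(\ell-1)$ exceeds $b_M\sim\ell^{2\ell-2}/(\ell-1)$ by a factor of $\ell$, not $\ell-1$; this does not affect the conclusion, but to get the strict inequality for every $\ell\ge 3$ (not just asymptotically) you should bound the non-identity Burnside terms inside $N$ by $O(\ell^{\ell+2})/(\ell^2(\ell-1))$ and the classes outside $N$ by $(\ell-1)\ell^2$ explicitly, as the paper does.
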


Thirdly, Theorem \ref{thm:example-rad-1} and Theorem \ref{thm:example-rad-2} only takes advantage of the fact that $b_M(G^{\text{rad}}, \Q) < b_T(G^{\text{rad}}, \Q)$ (as well as examples in \cite{koymans2023malle}). All issues we point out in Theorem \ref{thm:example-1} persist for general invariants, as expected. That means for $\text{rad}$, the same issue appears for $b_T(G^{\text{rad}}, Q)$ as well! See Example \ref{exm:C3wrC4-bt} and \ref{exa:koymans-pagano}. 

Fourthly, this issue of $b_M(G^{\text{rad}}, Q)$ demonstrated by Theorem \ref{thm:example-rad-1} and \ref{thm:example-rad-2}, does not conflict with previous results in Cohen-Lenstra heuristics on function field side, e.g. \cite{liu2019predicted,sawin2023conjectures,liu2022non}, where the product of ramified primes is being used, since all statistics are proved for $G$-extensions that do not contain constant extensions. Finally these examples show that there doesn't seem to exist a unifying invariant, discriminant or product of ramified primes, that solves all the troubles once and for all. 

\subsection{A New Proposal}
Now in retrospect of both types of examples presented, the intermediate subfield containing/not containing roots of unity can affect $b$ constants easily from different mechanisms. We don't try to design any invariant to suppress the influence from roots of unity. The solution we propose in the following is a refined version of Malle's conjecture with general invariants following the exact spirit from global function fields, where $G$-extensions with particular fixed constant extensions are naturally grouped together. Here we split up $N_{Q}(G^{\text{inv}}, X)$ into a finite set of subquestions according to the intersection of $G$-extensions with relevant cyclotomic extensions:

\begin{conjecture}[Refined Malle's Conjecture]\label{conj:refined-malle}
	Let $G\subset S_n$ be a finite group, $Q$ a global field with $(|G|, \charf(Q)) = 1$. Let $d = \text{lcm}_{\exp(g) = \exp(G)} \ord(g)$. For any cyclotomic subfield $F\subset Q(\mu_{d})$ with $\Gal(F/k) = B$, i.e. a surjectiion $\phi:G_Q \to B$ that factors through $\Gal(Q(\mu_d)/Q) \to B$, and a surjection $\pi: G \to B$, we define $N_{Q, \pi, \phi}(G^{\text{inv}}, X)$ to be the number of continuous surjective liftings $\tilde{\phi}$ such that: 
	\begin{itemize}
		\item it makes the diagram commute:
	\begin{center}
    \begin{equation}
		\begin{tikzcd}	
			&  &  & G_Q \arrow[ld, "\tilde{\phi}" ', dashed]\arrow[d,"\phi",two heads] &  \\		
			0\arrow{r} & \Ker(\pi) \arrow{r} & G \arrow[r, "\pi"] & B \arrow{r} & 0 \\
		\end{tikzcd}
            \label{diag:embedding-2}
    \end{equation}
	\end{center}
	\item the fixed Galois field $K(\tilde{\phi})$ associated to $\tilde{\phi}$ satisfy $K({\tilde{\phi}})\cap Q(\mu_d) = F$
    \item $\text{inv}(K(\tilde{\phi})) \le X$
    \end{itemize}
    We conjecture that either the above embedding problem  is not solvable with surjections and $N_{Q, \pi, \phi}(G^{\text{inv}}, X)  = 0$ or
	\begin{equation}
		N_{Q, \pi, \phi}(G^{\text{inv}}, X) = \Theta( X^{1/a(\pi)} \ln^{b(\pi, \phi)-1} X),
	\end{equation}
	(when $Q$ is a number field, replace $\Theta$ with $\sim$) where
	\begin{equation}
		a(\pi)= \min_{g\neq e} \{  \exp(g) \mid g\in \Ker(\pi)  \}, \quad \quad b(\pi, \phi)= |\mathcal{S}_{min}(\Ker(\pi)^{\text{inv}})/G(\pi, \phi)|
	\end{equation}
	where $\mathcal{S}_{min}(\Ker(\pi)^{\text{inv}}):=\{ g\in \Ker(\pi): \exp(g) =a(\pi)\}$ and $G(\pi, \phi):= G\times_{\pi, \phi} \Gal(Q(\mu_d)/Q) = \{(x,y) \mid \pi(x) = \phi(y) \}$ is naturally considered as a subgroup of $G\times (\Z/d\Z)^{\times}$. The action of $(x,y)\in G(\pi, \phi)$ on $\mathcal{S}_{min}(\Ker(\pi)^{\text{inv}})$ is $(x,y)\cdot g = x^{-1} g^y x$. 
\end{conjecture}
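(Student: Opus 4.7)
The plan is to adapt the Ellenberg--Venkatesh heuristic to the refined count $N_{Q,\pi,\phi}(G^{\text{inv}},X)$, tracking how ramified primes of $Q$ split across the base cyclotomic extension $F$ and how the induced $\Gal(F/Q)$-action on $\Ker(\pi)$ feeds into the orbit count defining $b(\pi,\phi)$. First I would parametrize the lifts $\tilde\phi$ in Diagram~\eqref{diag:embedding-2} by their restrictions $\psi=\tilde\phi|_{G_F}\colon G_F \to \Ker(\pi)$, which are $B$-equivariant surjections for the action of $B = G_Q/G_F$ on $\Ker(\pi)$ induced by $\phi$ together with a set-theoretic splitting of $\pi$. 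The fiber condition $K(\tilde\phi)\cap Q(\mu_d)=F$ selects those lifts whose image in $G$ meets $\Gal(Q(\mu_d)/Q)$ in exactly $\Gal(Q(\mu_d)/F)$; lifts with strictly larger cyclotomic intersection are handled by inclusion--exclusion and correspond to subquestions $N_{Q,\pi',\phi'}$ for quotients $\pi'\colon G\to B'$ with $B\subsetneq B'$.

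Next I would analyze the local contribution to $\text{inv}$. For a prime $\mathfrak{p}$ of $Q$ tamely ramified in $K(\tilde\phi)$ but unramified in $F$, $\mathfrak{p}$ must split completely in $F$ and its inertia is generated by some $g\in \Ker(\pi)$ contributing $\Nm(\mathfrak{p})^{\exp(g)}$ to $\text{inv}(K(\tilde\phi))$. The dominant contribution therefore comes from primes whose inertia generator lies in $\mathcal{S}_{min}(\Ker(\pi)^{\text{inv}})$, which accounts for the factor $a(\pi)$ in the main term. The finitely many primes of $Q$ ramified in $F$ contribute only to the leading constant, not to the exponent of $\log X$.

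To extract the exponent of $\log X$, I would apply a Tauberian theorem to the Dirichlet series $\sum \text{inv}(K(\tilde\phi))^{-s}$ expanded as an Euler product over primes of $Q$ weighted by Chebotarev density. At a prime $\mathfrak{p}$ with $\Frob_{\mathfrak{p}}=y$ in $\Gal(Q(\mu_d)/Q)$, a compatible choice of inertia generator $g\in \mathcal{S}_{min}(\Ker(\pi)^{\text{inv}})$ is parametrized, modulo $G$-conjugation and the cyclotomic twist $g\mapsto g^y$, by the action of the fibered group $G(\pi,\phi) = G\times_{\pi,\phi}\Gal(Q(\mu_d)/Q)$ via $(x,y)\cdot g = x^{-1} g^y x$. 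Summing over such primes yields a Dirichlet series with a pole at $s=1/a(\pi)$ of order exactly $|\mathcal{S}_{min}(\Ker(\pi)^{\text{inv}})/G(\pi,\phi)|$, giving $b(\pi,\phi)$ as predicted.

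The main obstacle is upgrading these heuristics to rigorous upper and lower bounds. Lower bounds of the right order should follow from explicit prime-by-prime constructions in the spirit of Kl\"uners' counterexamples. Upper bounds are much harder: since even the weak Malle conjecture remains open for general $G$ over number fields, the refined version inherits its full difficulty. In favorable settings --- $G$ nilpotent, or the Ellenberg--Tran--Westerland regime over $\F_q(t)$ with $(q,|G|)=1$ and $q$ large --- the required upper bound is within reach via existing Hurwitz-space or invariant-theoretic technology, and one can then induct on $|G|$ up the $B$-tower to handle the inclusion--exclusion separating the fibers $\{K(\tilde\phi)\cap Q(\mu_d)=F\}$.
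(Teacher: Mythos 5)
The statement you are addressing is a conjecture, not a theorem: the paper states Conjecture \ref{conj:refined-malle} as its proposed replacement for Malle's and T\"urkelli's predictions and explicitly does not prove it (the paper only records that no counterexamples are known). So there is no proof in the paper to compare against, and your proposal cannot be assessed as a correct proof. What you have written is a heuristic justification, and to your credit you essentially say so: your final paragraph concedes that the upper bound ``inherits the full difficulty'' of weak Malle, which is open for general $G$ over number fields. The positive content of your sketch --- forming the Euler product over primes of $Q$, weighting by the Chebotarev density of the Frobenius in $\Gal(Q(\mu_d)/Q)$, and reading off the order of the pole at $s=1/a(\pi)$ as the number of orbits of $G(\pi,\phi)$ on $\mathcal{S}_{min}(\Ker(\pi)^{\text{inv}})$ via Burnside's lemma --- is essentially the same motivation the paper itself gives in Section \ref{sec:tb} when deriving $b(\pi,\phi)$ from the tame relator $x_vy_vx_v^{-1}=y_v^{|v|}$. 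That is a derivation of the conjectural exponent, not a proof of the asymptotic, and the paper only verifies instances of it (lower bounds for wreath products via the inductive counting of Section \ref{sec:counting}).

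There is also a concrete error inside the heuristic: you assert that a prime $\mathfrak{p}$ tamely ramified in $K(\tilde\phi)$ but unramified in $F$ ``must split completely in $F$.'' It need not; being unramified in $F$ only forces the inertia generator into $\Ker(\pi)$, while the Frobenius of $\mathfrak{p}$ can land on any element of $B$. This is precisely why the relevant symmetry group is the fibered product $G(\pi,\phi)$ with the compatibility $\pi(x)=\phi(y)$, coupling the conjugation $x$ with the cyclotomic twist $y$ through the Frobenius; if the minimally ramified primes all split in $F$ you would only see the fiber over the identity and the orbit count would come out wrong. Your later paragraphs do use $G(\pi,\phi)$ correctly, so this is an internal inconsistency rather than a fatal flaw of the heuristic, but in any case the proposal does not --- and given the current state of the art cannot --- establish the conjectured asymptotic $\Theta(X^{1/a(\pi)}\ln^{b(\pi,\phi)-1}X)$ for general $G$ and general $\text{inv}$.
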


Very recent progresses on function field side \cite{landesman2025homological} over rational global function fields supports both Conjecture \ref{conj:refined-malle} and Conjecture \ref{conj:new-malle}, in the sense that when no lifting problem occurs, there does not exist new issues for $b$ over function fields. Analogue of $N_{Q, \pi, \phi}(G,X)$ is also discussed in \cite[Conjecture 5.1]{koymans2023malle} for nilpotent $G^{\text{rad}}$ without noticing the issue of lifting property, see Example \ref{exa:koymans-pagano} where we show lifting problems do occur for $p$-groups over number fields as well. 

In terms of the original Malle's Conjecture, we also give the following consequence on $N_{k}(G,X)$ over number fields. We see that
\begin{equation}
	N_k(G,X) = \sum_{\pi, \phi} N_{Q, \pi, \phi}(G,X)
\end{equation}
where $(\pi, \phi)$ ranges over the tuples in Conjecture \ref{conj:refined-malle}. Notice this is a finite disjoint sum. It then follows as a consequence of Conjecture \ref{conj:refined-malle}, that
\begin{conjecture}[New Malle's Conjecture]\label{conj:new-malle}
	Given a transitive permutation group $G\subset S_n$ and a number field $k$. Denote $d = \text{lcm}_{g, \ind(g) = \ind(G)} \ord(g)$. We conjecture that
	\begin{equation}
		b(G, k) = \max \quad  b(\pi, \phi)
	\end{equation}
where the maximum ranges over the finitely many pair of continuous surjective $(\pi, \phi)$ such that: \\
1) $\phi:G_k \to G/\Ker(\pi)$ exactly cut out cyclotomic subfields $k(\mu_d)/k$ and $\ind(\Ker(\pi)) = \ind(G)$; \\
2) $\phi$ can be lifted to $\tilde{\phi}$ in Diagram (\ref{diag:embedding-2}).
\end{conjecture}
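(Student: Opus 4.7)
The plan is to derive Conjecture \ref{conj:new-malle} as a direct consequence of Conjecture \ref{conj:refined-malle} specialized to the discriminant invariant. The starting observation (stated just above the conjecture) is that $N_k(G,X) = \sum_{(\pi,\phi)} N_{Q,\pi,\phi}(G,X)$ is a finite disjoint sum, since every $G$-extension $L/k$ is sorted by the unique pair $(\pi,\phi)$ recording $\Gal(L \cap k(\mu_d)/k)$. Applying the refined conjecture termwise yields either a vanishing contribution (when the embedding problem in Diagram~(\ref{diag:embedding-2}) has no surjective solution) or an asymptotic $N_{Q,\pi,\phi}(G,X) \sim c_{\pi,\phi}\, X^{1/a(\pi)} \log^{b(\pi,\phi)-1} X$. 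Since we are summing finitely many power-log functions, the leading term is governed by the pairs that first minimize $a(\pi)$ and then, among those, maximize $b(\pi,\phi)$, which will produce $b(G,k)$.

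I would then identify the dominant pairs. Because $a(\pi) = \min_{e \ne g \in \Ker(\pi)} \ind(g) \ge \ind(G)$ with equality iff $\Ker(\pi)$ contains some element of minimum index, the minimum of $a(\pi)$ is $\ind(G)$ and is achieved precisely under the condition $\ind(\Ker(\pi)) = \ind(G)$ from (1). For every such $\pi$, the integer $d = \text{lcm}_{\ind(g) = \ind(G)} \ord(g)$ appearing in Conjecture~\ref{conj:new-malle} agrees with the $d$ of Conjecture~\ref{conj:refined-malle} because all relevant generators lie in $\Ker(\pi)$, and consequently $\mathcal{S}_{min}(\Ker(\pi))$ in both conjectures matches. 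Condition (2) (solvability of the embedding) simply picks out the nonvanishing asymptotic summands. Taking the maximum of $b(\pi,\phi)$ over the resulting finite collection of admissible $(\pi,\phi)$ then yields the claimed value of $b(G,k)$.

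The main obstacle is the remaining part of condition (1): that $\phi$ cuts out the full cyclotomic field $k(\mu_d)/k$ rather than a proper subfield. The comparison one must make is that any pair $(\pi, \phi)$ with $B = G/\Ker(\pi) \subsetneq \Gal(k(\mu_d)/k)$ can be ``refined'' to a pair $(\pi',\phi')$ with $B' = \Gal(k(\mu_d)/k)$ whose orbit count is at least as large. Although $G(\pi', \phi') \hookrightarrow G(\pi, \phi)$ and $\mathcal{S}_{min}(\Ker(\pi')) \subseteq \mathcal{S}_{min}(\Ker(\pi))$, so neither side dominates a priori, the test case $G = C_3 \wr C_2$ (where the $B = \Gal(k(\mu_3)/k)$ choice gives two orbits while the trivial $B = \{e\}$ choice gives only one) suggests that the refinement always weakly increases the orbit count by breaking $(\Z/d\Z)^{\times}$-induced collapses. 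The technical heart of the derivation is therefore a group-theoretic orbit lemma establishing this phenomenon in general, together with a parallel argument handling the case when no group-theoretic refinement of $\pi$ to $B' = \Gal(k(\mu_d)/k)$ exists; once this is in hand, the decomposition above finishes the derivation as a bookkeeping consequence.
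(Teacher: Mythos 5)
Your first two paragraphs reproduce the paper's derivation correctly: Conjecture \ref{conj:new-malle} is obtained purely by writing $N_k(G,X)=\sum_{\pi,\phi}N_{k,\pi,\phi}(G,X)$ as a finite disjoint sum (each $G$-extension $K$ determines the unique pair via $F=\widetilde K\cap k(\mu_d)$), applying Conjecture \ref{conj:refined-malle} termwise, and reading off the dominant term among the non-vanishing summands with $a(\pi)=\ind(G)$. That is all the paper does, and condition (2) is exactly the bookkeeping of which summands vanish.

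The problem is your third paragraph, which rests on a misreading of condition (1). The condition ``$\phi$ exactly cut[s] out cyclotomic subfields $k(\mu_d)/k$'' means that $k(\phi)$ is a subfield of $k(\mu_d)$ and that the intersection $\widetilde K\cap k(\mu_d)$ equals $k(\phi)$ exactly (matching the second bullet of Conjecture \ref{conj:refined-malle}); it does \emph{not} require $k(\phi)=k(\mu_d)$. This is confirmed by the sentence immediately following the conjecture, which says that relative to Theorem \ref{thm:bt-simple} --- where $\phi$ ranges over \emph{all} maps factoring through $\Gal(k(\mu_d)/k)$ --- the only new condition is the lifting property. So the maximum is taken over all cyclotomic $\phi$, and no comparison among different cyclotomic $\phi$ is needed. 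More importantly, the ``group-theoretic orbit lemma'' you propose as the technical heart --- that refining $\phi$ toward the full cyclotomic field weakly increases $b(\pi,\phi)$ --- is false. Lemma \ref{lem:bphi-comparison} only reduces a general $\phi$ to the cyclotomic $\phi'$ cutting out $M(\phi)=Q(\phi)\cap Q(\mu_d)$; the paper states explicitly that no further comparison among cyclotomic $\phi$ is possible, and Example \ref{exm:C3wrC4-bt} exhibits $G=C_3\wr C_4$ with $b(\pi,\phi)=19$ for the trivial $\phi$ but $b(\pi,\phi)=17$ for $\phi$ cutting out $\Q(i)$, so enlarging the intermediate field can strictly decrease the orbit count. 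Dropping the third paragraph and keeping the maximum over all admissible cyclotomic $\phi$ (as the statement actually requires) leaves a complete derivation.
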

\noindent Comparing with Theorem \ref{thm:bt-simple}, we now have one more condition on the lifting property of $\phi$. 

We believe that for each $(\pi, \phi)$, the counting function $N_{Q,\pi, \phi}(G,X)$ should stand as each individual problem, as this is exactly the analogue of T\"urkelli's idea when brought over to number fields. We also expect many interesting statistical questions for each of these families, as the existence of roots of unity or interserction of cyclotomic fields not only affect field distribution, but also various arithmetic objects. 

Finally, we give the organization of this paper. In Section \ref{sec:b}, we discuss predictions of $b(G,Q)$ by Malle and T\"urkelli. In particular, we give a simplified definition of $b_T(G, Q)$ in Theorem \ref{thm:bt-simple}. In Section \ref{sec:counting}, we verify $b(G,Q)$ in all examples we listed in Theorem \ref{thm:example-1}, Theorem \ref{thm:example-rad-1} and Theorem \ref{thm:example-rad-2}. This includes computing the predictions from the group theoretic side, and also carrying over inductive argument to prove the true $b$. We also discuss a couple differences for counting function field and number fields in this section. In Section \ref{sec:embed}, we discuss the difference of Problem \ref{que:embedding-cyclotomic} over function fields and number fields, and give an explicit criteria when $G$ is solvable and $G$ is abelian respectively. 

\section*{Notation}
We use $Q$ to denote the base field when we discuss results for general global fields, and use $k$ to denote the base field when we discuss results for number fields. Without further explanation, $G$ is always a transitive permutation group. We use $G^{\text{inv}}$ to denote counting problem for $G$-extensions by an invariant $\text{inv}$. Frequent examples of invariants include $\text{rad}$ which is product of ramified primes and $\text{disc}$ which is discriminant. We use $\exp:G \to \Z_{>0}$ to denote the exponent function for a general $\text{inv}$. For $\text{disc}$, this function is historically denoted by $\text{ind}$. We denote the counting function for $G$-extensions over $Q$ with respect to $\text{inv}$ by $N_{Q}(G^{\text{inv}},X)$. There are notations for various $b$-constants. When $b$ is decorated with subscript $b_M$( respectively $b_T$), we mean the conjectural value by Malle (respectively T\"urkelli) or by Malle's (respectively T\"urkelli's) principle. Therefore $b_{*}(G^{\text{inv}}, Q)$ means Malle's/T\"urkelli's/true value of the power of log in counting $G$-extensions over base field $Q$ with respect to invariant $\text{inv}$. We use $\pi: G \to G/N \simeq B$ to denote the projection from $G$ and always use $N$ to denote $\Ker(\pi)$. We use $\phi: G_Q \to B$ to denote a general abelian extension. The counting function $N_{Q,\pi, \phi}(G^{\text{inv}}, X)$ counts the number of $G$-extensions with respect to the conditions listed in Conjecture \ref{conj:refined-malle}. The letter $d$ is the lcm of order of elements in $G$ with minimal $\exp(g)$. $G(\pi, \phi)$ is a subgroup of $G\times (\Z/d\Z)^{\times}$, see its definition in (\ref{eqn:Gpiphi}). The $\phi$-twisted action is defined in Definition \ref{def:phi}. We stick to $b(\pi, \phi)$ to denote the orbits of $|\mathcal{S}_{min}(\Ker(\pi)^{\text{inv}})/G(\pi, \phi)|$ under the $\phi$-twisted action from $G(\pi, \phi)$. We listed many conjectures in the introduction as they have been studied or proposed in the past: Conjecture \ref{conj:Malle}, \ref{conj:Malle-function-field}, \ref{conj:T\"urkelli}, \ref{conj:Malle-general-invariant} all have counterexamples. We do not know of counterexamples for Conjecture \ref{conj:function-field-larger} and Conjecture \ref{conj:refined-malle}. We also do not know counterexamples for Conjecture \ref{conj:T\"urkelli} for global function fields.

\section{Description of $b(G,k)$}\label{sec:b}
In this section, we give a precise description of Malle's prediction $b_M(G,k)$ and T\"urkelli's modifiction $b_T(G,k)$. Although the original conjecture of Malle and T\"urkelli are both made only for discriminant, for efficiency in discussing all theorems together, we define them with respect to general invariants once for all. 

Following the spirit in \cite[Section 2.1]{Woo10a}, we give the following definition for a general counting invariant. 
\begin{definition}[Counting Invariant]\label{def:invariants}
	Let $G\subset S_n$ be a finite permutation group and $\mathcal{C}(G)$ be the set of conjugacy classes of $G$. Let $Q$ be a global field. Let $\exp: \mathcal{C}(G) \backslash \{ e \} \to \Z_{> 0}$ be a function where $\exp(g) = \exp(g^k)$ for any $k$ that is relatively prime to $\ord(g)$. For each place $v||G|\cdot \infty$ (above $|G|$ and $\infty$ of $\Q$), we choose $\exp_v: \Sigma_v \to \R_{> 0}$ where $\Sigma_v$ is the set of continuous group homomorphisms $\rho_v: G_{Q_v} \to G\subset S_n$ (up to conjugation in $S_n$). Then for each $G$-extension $K/Q$ with $\Gal(K/Q) \simeq G\subset S_n$, equivalently given by a continuous surjective group homomorphism $\rho_K: G_Q \to G$, we define the \emph{counting invariant} for $K$ associated to $f$ and $f_v$ to be an integer denoted by $\text{inv}(K)$:
	\begin{equation}
			\text{inv}(K) = \prod_{v||G|} |v|^{\exp_v(\rho_v)} \prod_{v|\infty} \exp_v(\rho_v) \prod_{v\nmid |G|} |v|^{\exp(y_v)},
	\end{equation}
	where $y_v$ is any tame inertia generator at $v$ in $\Gal(K/Q)$.
\end{definition}
For a general invariant $\text{inv}$, we use $G^{\text{inv}}$ to denote the counting question with $\text{inv}$ and $N_k(G^{\text{inv}}, X)$ to denote the counting function. For example, $G^{\text{rad}}$ denotes the counting question with radical of discriminant. When we do not specify the invariant, our counting invariant for $G\subset S_n$ is the usual discriminant. 

\subsection{Malle's constant $b_M(G,k)$}
We first describe the $a$-constant $a(G)$ in Conjecture \ref{conj:Malle}.
\begin{definition}[Index]\label{def:ind}
Given a transitive permutation group $G\subset S_n$, we define the \emph{index} for $g\in G\subset S_n$ to be 
	$$\ind(g): = n-\sharp\{ \text{cycles of }g \}.$$
	The \emph{index} of $G$ is defined as
	$$\ind(G): = \min_{g\neq e} \ind(g).$$
\end{definition}
\noindent The integer $a(G)$ is exactly $\ind(G)$. Notice that $\ind(\cdot)$ is exactly the function $\exp(\cdot)$, i.e. the power of $p$ in the discrimint when the tame inertia is $g$, when the counting invariant $\text{inv}$ is discriminant, we in general define
\begin{equation}
	\exp(G^{\text{inv}}):= \min_{g\neq e} \exp(g).
\end{equation}
Recall that
\begin{definition}[Cyclotomic Action]
	Given any field $Q$, the \emph{cyclotomic character} is the canonical homomorphism
	$$\chi_{cyc}: G_Q \to \Aut(\mu_{\infty})\subset \hat{\Z}^{\times} = \lim_{\substack{\longleftarrow\\n}}(\Z/n\Z)^{\times}.$$
	We define the \emph{cyclotomic action} of $G_Q$ on a finite group $G$ as $\sigma(g) = g^{\chi_{cyc}(\sigma)}$.
\end{definition}
\noindent Notice that since $G$ is finite, $g^{\chi_{cyc}(\sigma)}$ only depends on the image $\chi_{cyc}(\sigma)$ in $\Z/|G|\Z$. More concretely, denote $d = |G|$, it suffices to consider the image of $G_{Q}$ into $(\Z/d\Z)^{\times}$. If $\sigma \in G_{Q}$ maps $\sigma(\mu_d) = \mu_d^a$, then $\sigma(g) = g^a$. In fact if a certain group element $g\in G$ has order $m$, then the action of $\sigma \in G_Q$ on $g$ can be computed as $g^{\chi_{cyc}(\sigma)}$ via its image in $(\Z/m\Z)^{\times}$ already. 

\begin{definition}[Malle's $b$]
Denote $\mathcal{C}_{min}(G^{\text{inv}})$ to be the set of conjugacy classes of $G$ with minimal $\exp$. We now define
\begin{equation}\label{eqn:bM}
	b_M(G^{\text{inv}},Q): = | \mathcal{C}_{min}(G^{\text{inv}})/ G_Q|,
\end{equation}
under the cyclotomic action from $G_Q$. 
\end{definition}
\noindent Malle in \cite{Mal04} conjectures that $b_M(G, Q) = b(G, Q)$ where $G$ stands for the natural discriminant as the counting invariant. 
\begin{remark}
This action factors through $\Gal(Q(\mu_d)/Q)$ where $d$ is $\text{lcm}_{g \in \mathcal{C}_{min}(G^{\text{inv}}) } \ord(g)$. This means that the base field dependence in $b_M(G^{\text{inv}},Q)$ only comes from $\Gal(Q(\mu_d)/Q)\subset (\Z/d\Z)^{\times}$. 
\end{remark}
\noindent We demonstrate the computation of $b(G,k)$ in the following examples. It is also stated in \cite{Klu12}.
\begin{example}[Wreath Product]\label{Example:wreath-product}
	Let $G = T\wr B\subset S_{mn}$ be the wreath product as a permutation group, where $T\subset S_m$ and $B\subset S_n$. Since $\ind( (t_1, \cdots, t_m) \rtimes \sigma ) \ge \ind( (t_1, \cdots, t_m)\rtimes e)$, we see $\ind(G) = \ind( (t, e, \cdots, e) \rtimes e)$ where $\ind(t) = \ind(T)$. It is now easy to see that the only elements with this index are exactly in such a form. Therefore $\ind(G) = \ind(T)$ and $\mathcal{C}_{min}(G)$ is in bijection with $\mathcal{C}_{min}(T)$. Moreover the action from $G_Q$ is identical. Thus $b_M(G, Q) = b_M(T,Q)$. 
\end{example}
\begin{remark}\label{rmk:malle-inconsist}
Given the fact that the Galois group of a relative $T$-extension over a $B$-extension has Galois group embedded into $T\wr B$, this example immediately implies that Conjecture \ref{conj:Malle} cannot be true in general, since it is not consistent with itself. Indeed for some $T$ and $B$ (e.g. $T= C_3$ and $B=C_2$), the total prediction of all permutation Galois groups that arise as a relative $T$-extension over a $B$-extension add up to be smaller than the number of relative $T$-extensions for a fixed cyclotomic $B$-extensions. 
\end{remark}

\subsection{T\"urkelli's constant $b_T(G,k)$}\label{sec:tb}
In this section, we give T\"urkelli's modification on Malle's constant $b$ in \cite{turkelli2015connected}. This is of crucial importance, since it is often not interpreted correctly. We adopt a slightly different language from T\"urkelli, as we believe the current approach has more clarity and leads to simplification. We also derive a simple form of T\"urkelli's Conjecture in Theorem \ref{thm:bt-simple}.

Let $G$ be a transitive permutation group and $N\trianglelefteq G$ a normal subgroup and $Q$ a global field and $\phi: G_Q \to G/N$ be any continuous group homomorphism. Let $\text{inv}$ be the counting invariant and $\mathcal{S}_{min}(N^{\text{inv}})$ be the set of group elements in $N\trianglelefteq G$ with minimal $\exp$. Let $d$ be the lcm of $\ord(g)$ for $g\in \mathcal{S}_{min}(G^{\text{inv}})$. Denote $Q(\phi)$ be the field corresponding to $\ker(\phi)$ and $M(\phi):=Q(\phi)\cap Q(\mu_d)$ is the intersection of $Q(\phi)$ with the relevant cyclotomic fields. We define
\begin{equation}\label{eqn:Gpiphi}
    G(\pi, \phi):= G\times_{\pi, \phi} \Gal(Q(\mu_d)/Q) \subset G\times (\Z/d\Z)^{\times}
\end{equation}
to be a fibered product of $G$ and $\Gal(Q(\mu_d)/Q)$ where the fibering map is the induced maps $\phi:\Gal(Q(\mu_d)/Q) \to \Gal(M(\phi)/Q)$ and $\pi:G \to \Gal(Q(\phi))\to \Gal(M(\phi)/Q)$. 
\begin{definition}[$\phi$-twisted cyclotomic action]\label{def:phi}
Given $G$, $\pi$, $\phi$ and $\text{inv}$. We define the \emph{$\phi$-twisted cyclotomic action} of $G(\pi, \phi)$ on $\mathcal{S}_{min}(\Ker(\pi)^{\text{inv}})$ via:
\begin{equation}
    (x,y)\cdot g = x^{-1} \cdot g^y\cdot x.
\end{equation}
We then define
\begin{equation}
    b(\pi, \phi):= |\mathcal{S}_{min}(\Ker(\pi)^{\text{inv}})/G(\pi, \phi)|.
\end{equation}

\end{definition}
\begin{definition}[T\"urkelli's Modified $b$]\label{def:tb}
Given a global field $Q$ and $G^{\text{inv}}$, 
\begin{equation}
		b_T(G^{\text{inv}}, Q):= \max_{\pi, \phi} \quad  b(\pi, \phi),
\end{equation}
where $\pi$ ranges over all $G\to G/N$ where $a(N) =a(G)$ and $\phi$ ranges among $\phi:G_Q \to G/N$ that factors through $\Gal(Q(\mu_{\infty})/Q)$. 
\end{definition}
\begin{remark}\label{rmk:we-generalize}
T\"urkelli \cite{turkelli2015connected} only formulates his conjecture for the discriminant. We take the liberty of calling it $b_T$ for general invariants, with the same spirit carried over. For $\text{inv} = \text{disc}$, his conjecture states
 	\begin{equation}
 	b_T(G, Q):= \max_{N\triangleleft G, \ind(N) = \ind(G), G/N \text{ is abelian}} b( N, G, Q),
 \end{equation}
where his notation $b(N,G,Q)$ is the maximal $b(\pi, \phi)$, denoted to be $b_{\phi}(N,G,Q)$ by T\"urkelli, among all tuple $(\pi, \phi)$ where $\pi: G\to G/N$ is fixed and $\phi$ varies over all $\phi:G_Q \to G/N$ that factors through $\Gal(Q(\mu_{\infty})/Q) \to G/N$. In particular when we restrict to rational number field $\Q$, $\phi$ varies over all abelian $G/N$-extensions of $\Q$. 
\end{remark}
\begin{remark}\label{rmk:we-simplify}
Our $\phi$-twisted action is defined in a slightly different way in \cite{turkelli2015connected}. In particular, T\"urkelli defines $b(\pi, \phi)$ (in his notation $b_{\phi}(G, N, Q)$) to be the number of orbits of $G_{Q}$ acts on the set of conjugacy classes $C$ of $N$ (conjugation taken only inside $N$ instead of $G$) with minimal $\exp$, i.e. $\{ \text{conjugacy class } c\subset N  \mid \exp(c) = \exp(N)\} $, via the action: for $\sigma \in G_Q$
$$\sigma(c) = \bar{\sigma}^{-1} \cdot  {c}^{\chi_{cyc}(\sigma)} \cdot \bar{\sigma},$$
where $\bar{\sigma} \in G$ can be any preimage of $\phi(\sigma)\in G/N$. Notice that this action of $G_Q$ factors through $\Gal(M(\phi)/Q)$. Notice that 
$$\mathcal{S}_{min}(\Ker(\pi)^{\text{inv}})/ (N\times e\subset G(\pi, \phi)) = \{ \text{conjugacy class } c\subset N \mid \exp(c) = \exp(N)\},$$
it is clear that the number of orbits are the same for the two definitions. 
\end{remark}

\begin{remark}
	In \cite{turkelli2015connected} this definition of $b$-constants is only stated for $\phi$ corresponding to abelian extensions. In \cite{alberts2022statistics} this twisted action and orbits $b(\pi, \phi)$ is defined for general $\phi$. In \cite{alberts2022statistics}, $b(\pi, \phi)$ (under different notation) is conjectured to describe the asymptotic distribution for general $N$-torsors over a fixed $\phi$. This is how we should think about it morally. It is justified with a heuristic argument similarly like how Malle's original conjecture is justified. It is also verified to be the correct $b$ when $N$ is abelian, parallel to that Malle's original conjecture also holds for all abelian extensions. However Kl\"uners' counterexample still stands as counterexamples for these more general twisted conjectures.
\end{remark}

\begin{remark}
The order of conjugation in Definition \ref{def:phi} cannot be reversed. The author thanks Loughran and Santens for pointing this out. Similar arguments are already carried over in \cite{alberts2022statistics}. For the convenience of the reader, we give an explanation. Mainly, this is due to the fact that the tame relator is $xyx^{-1}=y^p$, not the other way around. To derive the heuristics $b$ for counting $G$-extensions $\rho:G_Q \to G$ that $\pi\circ\rho = \phi$ where $\pi: G\to G/N$ and $\phi:G_Q \to G/N$ is given, 
an Euler product can be formed to approximate the generating series 
\begin{equation}
    \prod_{v\in Q} \left(\dfrac{1}{|N|} \sum_{\rho_v: G_{Q_v} \to G, \pi\circ\rho_v=\phi_v} \dfrac{1}{|\text{inv}(\rho_v)|^s} \right).
\end{equation}
When $v\nmid |G|$, $\rho_v$ factor through the tame quotient of $G_{Q_v}:=\langle x_v,y_v\mid x_vy_vx_v^{-1}=y_v^{|v|}\rangle$ and $\text{inv}(\rho_v)= |v|^{\text{exp}(\rho_v(y))}$. Denoting $\delta$ to be the Chebotarev density of primes of chosen behaviors. The primes with Frobenius $(\bar{x},a) \in G/N\times \Gal(\Q(\mu_d)/Q)$, all altogether has a contribution 
\begin{equation}
\begin{aligned}
\delta(\bar{x},a) \cdot \dfrac{|\{(x,y)\in G^2\mid xyx^{-1} = y^{|v|}, \pi(x)=\bar{x},\pi(y)=e, \exp(y) = \exp(N) \}|}{|N|} 
\end{aligned}
\end{equation}
to the order of poles. Therefore the total order is
\begin{equation}
\dfrac{1}{|N|\cdot |G/N\times_{\phi}\Gal(Q(\mu_d)/Q)|}\sum_{ (x,a) \in G(\pi, \phi) } |\{y\in \mathcal{S}_{min}(N^{\text{inv}}) \mid xyx^{-1} = y^{a}\}|
\end{equation}
which is the summation of fixed points of $\phi$-twisted action from $G(\pi, \phi)$ on $\mathcal{S}_{min}(N^{\text{inv}})$, where $xyx^{-1} = y^a$ means $y = x^{-1}\cdot y^a\cdot x$ is fixed by the twisted action. From Burnside's Lemma, we can see this is exactly computing the number of orbits of $G(\pi, \phi)$ on $\mathcal{S}_{min}(\Ker{\pi}^{\text{inv}})$. We can also define the twisted action as $(x,a): y \to x y^{-a} x^{-1}$. It can be easily verified that the number of orbits are the same. Notice this does not make a difference on the original Malle's Conjecture since $(x,a)$ and $(x^{-1},a)$ are both contained in $G\times \Gal(Q(\mu_d)/Q)$ when the fibered product is just the direct product. 
\end{remark}

We make two comments. Firstly, it is clear that $b_M(G^{\text{inv}}, Q) = b(\pi, \phi)$ when $\pi$ and $\phi$ are both trivial, therefore
\begin{lemma}
Given $G\subset S_n$ and a global field $Q$, it always holds that
	\begin{equation}
		b_M(G, Q) \le b_T(G,Q).
	\end{equation}
This also holds for general counting invariants $\text{inv}$.
\end{lemma}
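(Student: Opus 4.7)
The plan is to exhibit a single distinguished pair $(\pi,\phi)$ in the maximization defining $b_T$ whose associated orbit count $b(\pi,\phi)$ recovers exactly $b_M(G^{\text{inv}},Q)$. Specifically, I would take $\pi:G\to G/N$ to be the trivial quotient, i.e.\ $N=G$ (so $a(\Ker(\pi))=a(G)$ is automatic), and $\phi:G_Q\to G/N=\{e\}$ the trivial continuous homomorphism. This pair lies in the range over which $b_T(G^{\text{inv}},Q)$ is defined as a maximum, so the inequality will follow once $b(\pi,\phi)=b_M(G^{\text{inv}},Q)$ is verified for this choice.

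For this trivial pair, the fibered product degenerates: since both $\pi$ and $\phi$ land in the trivial group, $G(\pi,\phi)=G\times\Gal(Q(\mu_d)/Q)$ sits inside $G\times(\Z/d\Z)^\times$ as the full direct product. Moreover $\Ker(\pi)=G$, so $\mathcal{S}_{min}(\Ker(\pi)^{\text{inv}})=\mathcal{S}_{min}(G^{\text{inv}})$ is the set of all elements of $G$ realizing the minimal exponent. The $\phi$-twisted action of $(x,y)\in G\times\Gal(Q(\mu_d)/Q)$ on such $g$ is $x^{-1}g^y x$.

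I would then compute the orbit set in two stages. First, quotient by the subgroup $G\times\{e\}$ acting by conjugation; this collapses $\mathcal{S}_{min}(G^{\text{inv}})$ onto the set of conjugacy classes $\mathcal{C}_{min}(G^{\text{inv}})$. Second, quotient by the residual action of $\{e\}\times\Gal(Q(\mu_d)/Q)$, which on conjugacy classes is exactly the cyclotomic action of $G_Q$ (this factors through $\Gal(Q(\mu_d)/Q)$ by the remark following the definition of $b_M$, since the relevant cyclotomic character only sees $g^{\chi_{\mathrm{cyc}}(\sigma)}$ modulo the orders of the elements, which all divide $d$). The resulting orbit set is $\mathcal{C}_{min}(G^{\text{inv}})/G_Q$, whose cardinality is $b_M(G^{\text{inv}},Q)$ by equation (\ref{eqn:bM}).

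Combining these identifications yields $b(\pi,\phi)=b_M(G^{\text{inv}},Q)$ for this particular pair, whence $b_T(G^{\text{inv}},Q)=\max_{\pi',\phi'}b(\pi',\phi')\ge b_M(G^{\text{inv}},Q)$. There is no real obstacle here; the statement is essentially an unpacking of definitions, and the only thing to be careful about is confirming that the trivial pair is actually permitted in the maximization of Definition \ref{def:tb}, which holds because $a(G)=a(G)$ and the trivial homomorphism $G_Q\to\{e\}$ trivially factors through $\Gal(Q(\mu_\infty)/Q)$.
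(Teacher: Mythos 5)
Your proposal is correct and is exactly the paper's argument: the paper proves this lemma by observing that $b_M(G^{\text{inv}},Q) = b(\pi,\phi)$ when $\pi$ and $\phi$ are both trivial, so the maximum defining $b_T$ dominates $b_M$. Your write-up simply spells out the two-stage orbit computation (conjugation collapsing $\mathcal{S}_{min}$ to $\mathcal{C}_{min}$, then the cyclotomic action) that the paper leaves as "clear."
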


Secondly, over function fields $Q^{cyc}$ is simply $Q \cdot \bar{\mathbb{F}}_p$ which is cyclic, however over number fields $Q^{cyc}$ is not, in fact it is countably generated over any number field. This means that determining $b_T(G, Q)$ involves checking for infinite many $\phi$ for number fields but finitely many for function fields. Therefore we make the following simplification for number fields. 

We first compare $b(\pi, \phi)$ among different $\phi$.
\begin{lemma}\label{lem:bphi-comparison}
    Given $G$, $\pi$, $\phi$, $Q(\phi)$, $M(\phi)$ and $\text{inv}$. Let $\phi':G_Q \to \Gal(M(\phi)/Q)$ induced by $\phi$. Denote the $B_0:=\Gal(Q(\phi)/M(\phi))\subset B$ and let $\pi':G \to G/\pi^{-1}(B_0)$. Then
    \begin{equation}
        b(\pi, \phi) \le b(\pi', \phi').
    \end{equation}
\end{lemma}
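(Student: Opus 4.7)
The plan is to reduce the statement to a simple comparison of orbit spaces for the same group action on two nested invariant subsets. First, I would verify that $G(\pi,\phi)$ and $G(\pi',\phi')$ coincide as subgroups of $G\times(\Z/d\Z)^{\times}$. Both are fibered products over $\Gal(M(\phi)/Q) = B/B_0$: the map from $G$ to $B/B_0$ sends $x \mapsto \pi'(x) = \bar\pi(\pi(x))$, where $\bar\pi:B\twoheadrightarrow B/B_0$, and the map $\Gal(Q(\mu_d)/Q)\to B/B_0$ is the one induced by $\phi$, which factors through $\Gal(Q(\mu_d)/Q)$ because $M(\phi)\subset Q(\mu_d)$. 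Since $\phi' = \bar\pi\circ\phi$ and $M(\phi') = M(\phi)$, the fibered product defining $G(\pi',\phi')$ uses exactly the same two structural maps, so $G(\pi,\phi) = G(\pi',\phi')$ as explicit subgroups, and the $\phi$-twisted action $(x,y)\cdot g = x^{-1}g^y x$ is literally the same in both settings.

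Next I would compare the acted-on sets. Because $\pi' = \bar\pi\circ\pi$ with $\ker(\bar\pi) = B_0$, we have $\Ker(\pi') = \pi^{-1}(B_0) \supseteq \Ker(\pi)$. In the intended application (Definition~\ref{def:tb} and Theorem~\ref{thm:bt-simple}) one takes $\pi$ with $\exp(\Ker(\pi)) = \exp(G)$; then $\exp(\Ker(\pi')) = \exp(\Ker(\pi))$ follows at once from $\Ker(\pi)\subset\Ker(\pi')\subset G$, so $\mathcal{S}_{min}(\Ker(\pi)^{\text{inv}}) \subseteq \mathcal{S}_{min}(\Ker(\pi')^{\text{inv}})$ as subsets of $G$.

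Finally I would check invariance and conclude. The action $(x,y)\cdot g = x^{-1}g^y x$ preserves every normal subgroup of $G$ (since conjugation by $x \in G$ stabilizes $\Ker(\pi)$ and $\Ker(\pi')$) and preserves $\exp(g)$ (since $y$ is a unit modulo $\ord(g)$, and $\exp$ is invariant under taking coprime powers by the hypothesis in Definition~\ref{def:invariants}). Hence both $\mathcal{S}_{min}$ sets are invariant subsets under the common action of $G(\pi,\phi) = G(\pi',\phi')$. For any group action on a set with an invariant subset, the number of orbits on the ambient set is at least the number of orbits on the invariant subset; applying this to the inclusion of the previous paragraph gives $b(\pi,\phi) \le b(\pi',\phi')$.

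The main obstacle I anticipate is the bookkeeping point that $\exp(\Ker(\pi')) = \exp(\Ker(\pi))$: without it the two minimal-$\exp$ sets could be disjoint and the orbit inclusion would fail. For the use of the lemma in controlling $b_T(G^{\text{inv}},Q)$ this is automatic from the standing condition $\exp(\Ker(\pi)) = \exp(G)$, but if one wishes the statement in greater generality an auxiliary hypothesis (or a finer orbit comparison tracking elements of $\Ker(\pi')\setminus\Ker(\pi)$) is required. Everything else is a short identification of fibered products plus a one-line orbit count.
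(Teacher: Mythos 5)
Your proof is correct and follows exactly the paper's (one-line) argument: the identification $G(\pi,\phi)=G(\pi',\phi')$ together with the containment $\mathcal{S}_{min}(\Ker(\pi)^{\text{inv}})\subseteq\mathcal{S}_{min}(\Ker(\pi')^{\text{inv}})$ of invariant subsets, hence an inequality of orbit counts. Your added caveat that the containment needs $\exp(\Ker(\pi'))=\exp(\Ker(\pi))$ --- automatic under the standing condition $\exp(\Ker(\pi))=\exp(G)$ from Definition \ref{def:tb} --- is a genuine point the paper's terse proof glosses over, and you resolve it correctly.
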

\begin{proof}
 It follows from $\mathcal{S}_{min}(\Ker(\pi')^{\text{inv}})\subset \mathcal{S}_{min}(\Ker(\pi')^{\text{inv}})$ and $G(\pi, \phi) = G(\pi', \phi')$.
\end{proof}

Applying Lemma \ref{lem:bphi-comparison}, we immediately get the following simplification to compute T\"urkelli's constant:
\begin{theorem}\label{thm:bt-simple}
Given $G\subset S_n$, $Q$ a global field, and $d = \text{lcm} \{ \ord(g) \mid \exp(g) = \exp(G) \}$. We have the following equality:
\begin{equation}
	b_{T}(G^{\text{inv}}, Q) =\max_{\pi, \phi} \quad b(\pi, \phi),
\end{equation}
where the maximum is only taken among all $\pi$ with $\exp(\Ker(\pi)) = \exp(G)$ and $\phi:G_Q\to G/N$ that factors through $\Gal(Q(\mu_d)/Q) \to G/N$.
\end{theorem}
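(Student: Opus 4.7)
The plan is to deduce Theorem \ref{thm:bt-simple} as an immediate corollary of Lemma \ref{lem:bphi-comparison}. Denote the right-hand side maximum by $b_T^{\circ}(G^{\text{inv}}, Q)$, taken over pairs $(\pi, \phi)$ with $\exp(\Ker(\pi)) = \exp(G)$ and $\phi$ factoring through $\Gal(Q(\mu_d)/Q) \to G/N$. Comparing with Definition \ref{def:tb}, every pair admissible for $b_T^{\circ}$ is also admissible for $b_T$, since $\Gal(Q(\mu_d)/Q)$ is a quotient of $\Gal(Q(\mu_\infty)/Q)$. This yields the easy inequality $b_T^{\circ}(G^{\text{inv}}, Q) \le b_T(G^{\text{inv}}, Q)$.

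The main task is the reverse inequality. I will take an arbitrary pair $(\pi, \phi)$ admissible for $b_T$ and produce a pair $(\pi', \phi')$ admissible for $b_T^{\circ}$ with $b(\pi, \phi) \le b(\pi', \phi')$. Following the setup of Lemma \ref{lem:bphi-comparison}, put $M(\phi) = Q(\phi) \cap Q(\mu_d)$, let $B_0 = \Gal(Q(\phi)/M(\phi)) \subset G/N$, and set $\pi' : G \to G/\pi^{-1}(B_0)$ together with the induced map $\phi' : G_Q \to \Gal(M(\phi)/Q)$. Lemma \ref{lem:bphi-comparison} directly supplies the inequality $b(\pi, \phi) \le b(\pi', \phi')$, so it remains only to verify the two admissibility conditions for $b_T^{\circ}$.

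The factoring condition is immediate: $\phi'$ factors through $\Gal(Q(\mu_d)/Q) \to \Gal(M(\phi)/Q)$ because $M(\phi) \subset Q(\mu_d)$ by construction, and $G/\Ker(\pi') \simeq (G/N)/B_0 \simeq \Gal(M(\phi)/Q)$. For the minimality condition, note that $\Ker(\pi') = \pi^{-1}(B_0) \supset N$, so
$$\exp(\Ker(\pi')) \;=\; \min_{g \neq e \in \Ker(\pi')} \exp(g) \;\le\; \min_{g \neq e \in N} \exp(g) \;=\; \exp(N) \;=\; \exp(G);$$
the reverse inequality $\exp(\Ker(\pi')) \ge \exp(G)$ is automatic since $\exp(G) = \min_{g \neq e \in G} \exp(g)$ is the global minimum over $G \setminus \{e\}$. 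Combining both directions gives $\exp(\Ker(\pi')) = \exp(G)$, completing the argument.

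I expect no genuine obstacle in this deduction: all the technical content lies in Lemma \ref{lem:bphi-comparison}, whose proof in the excerpt rests on the inclusion $\mathcal{S}_{min}(\Ker(\pi)^{\text{inv}}) \subset \mathcal{S}_{min}(\Ker(\pi')^{\text{inv}})$ and the identification of fibered products $G(\pi, \phi) = G(\pi', \phi')$. The Theorem is essentially a repackaging that makes the finiteness of the maximization explicit for both function fields and number fields by replacing the infinite tower $Q(\mu_\infty)$ with the finite cyclotomic extension $Q(\mu_d)$.
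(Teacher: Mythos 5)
Your proposal is correct and follows exactly the route the paper intends: the paper derives Theorem \ref{thm:bt-simple} as an immediate consequence of Lemma \ref{lem:bphi-comparison}, and your argument simply fills in the details left implicit there (the easy containment of admissible pairs, and the verification that the pair $(\pi',\phi')$ produced by the lemma satisfies both the factoring condition through $\Gal(Q(\mu_d)/Q)$ and the condition $\exp(\Ker(\pi'))=\exp(G)$). No discrepancy with the paper's approach.
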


\noindent Notice that given $Q(\mu_d)/Q$ being finite, it is now a finite checking with $\phi$ exactly corresponding to subfields of $Q(\mu_d)$. 
\begin{remark}
If $\mathcal{S}_{min}(G^{\text{inv}})\neq G$, then for $\pi$ with $\Ker(\pi)\supset \mathcal{S}_{min}(G^{\text{inv}})$, it suffices to look at $\phi$ that are maximal (in the sense that $G(\pi, \phi)$ is minimal). It may be non-unique. 
\end{remark}

In general, this is the best one can say: there is no further comparison for $b(\pi, \phi)$ in Theorem \ref{thm:bt-simple} among different cyclotomic $\phi$. See Example \ref{exm:C3wrC4-bt}. Lemma \ref{lem:bphi-comparison} also shows that there is no inconsistency in Conjecture \ref{conj:T\"urkelli} and \ref{conj:refined-malle} in the sense of Remark \ref{rmk:malle-inconsist}. It also motivates Conjecture \ref{conj:refined-malle}.

\section{Computation of $b(G,Q)$}\label{sec:counting}
In this section, we verify various $b$-constants in Conjecture \ref{conj:Malle}, Conjecture \ref{conj:T\"urkelli}, Conjecture \ref{conj:Malle-general-invariant} and the true value.
\subsection{Group Constant Computation}
In this section, we give the group theoretic computation of predictions for $b$-constants from Malle and T\"urkelli. We first give the computation towards showing Theorem \ref{thm:example-1}.
\begin{lemma}
	Let $\ell$ be an odd prime number and $d = \prod_i p_i^{r_i}$ where $p_i$ are all prime numbers. Let $G = C_{\ell} \wr C_{d}\subset S_{\ell d}$ and $\gcd(q, |G|) =1$. If $\gcd(d, \ell-1) = \prod_i p_i^{s_i}$ then
	$$b_T(G, \Q) =\gcd(d, \ell-1) \quad \quad b_M(G, \Q) = 1.$$
\end{lemma}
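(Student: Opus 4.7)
The plan is to compute $b_M(G,\Q)$ and $b_T(G,\Q)$ separately, exploiting the wreath structure $G=C_\ell\wr C_d$ and the action of the cyclotomic character on $C_\ell$.

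For $b_M(G,\Q)$, I will reduce to $b_M(C_\ell,\Q)$ via Example \ref{Example:wreath-product}. The minimal-index elements of $C_\ell\subset S_\ell$ are the $\ell-1$ nontrivial elements (each an $\ell$-cycle, each its own conjugacy class since $C_\ell$ is abelian). The cyclotomic character $G_\Q\to(\Z/\ell\Z)^\times$ acts transitively on $C_\ell\setminus\{e\}$ by exponentiation, so $b_M(C_\ell,\Q)=1$ and hence $b_M(G,\Q)=1$.

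For $b_T(G,\Q)$, the first step is to classify the normal subgroups $N\trianglelefteq G$ with $\ind(N)=\ind(G)=\ell-1$. The minimal-index elements of $G$ are exactly the single-coordinate elements $(0,\dots,t,\dots,0)\rtimes e$, and their $G$-normal closure fills out the base $C_\ell^d$, so every such $N$ must contain $C_\ell^d$. Therefore $N=N_{d'}:=C_\ell^d\rtimes C_{d'}$ for a unique divisor $d'\mid d$, with quotient $\pi:G\twoheadrightarrow G/N_{d'}\cong C_{d/d'}$. The $N_{d'}$-conjugacy classes of minimal-index elements are labeled by pairs $(c,t)$ where $c$ ranges over the $d/d'$ orbits of $C_{d'}$ on positions $\{1,\dots,d\}$ and $t\in C_\ell\setminus\{e\}$.

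Next, for each $N_{d'}$, I compute the maximum of $b(\pi,\phi)$ over cyclotomic $\phi:G_\Q\to C_{d/d'}$. By Theorem \ref{thm:bt-simple} together with Lemma \ref{lem:bphi-comparison}, the supremum is attained at a surjective $\phi$ factoring through $\Gal(\Q(\mu_\ell)/\Q)=(\Z/\ell\Z)^\times$, which requires $d/d'\mid\ell-1$. For such $\phi$, the twisted action of $\sigma\in(\Z/\ell\Z)^\times$ on pairs sends $(c,t)\mapsto(c-\phi(\sigma),\,t\cdot\chi_{\mathrm{cyc}}(\sigma))$ with $\chi_{\mathrm{cyc}}(\sigma)=\sigma$. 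Since $\chi_{\mathrm{cyc}}$ acts freely on $C_\ell\setminus\{e\}$, a short orbit–stabilizer argument shows the stabilizer of any $(c,t)$ is trivial, every orbit has size $\ell-1$, and $b(\pi,\phi)=d/d'$. Maximizing $d/d'$ under the joint constraint $d'\mid d$ and $d/d'\mid\ell-1$ forces $d/d'=\gcd(d,\ell-1)$, attained by $d'=d/\gcd(d,\ell-1)$.

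The main obstacle is ruling out larger $b$-values when $d/d'\nmid\ell-1$. In that regime any surjective $\phi$ onto $C_{d/d'}$ must factor through a cyclotomic tower $\Q(\mu_n)$ whose conductor involves primes other than $\ell$, so the joint character $(\phi,\chi_{\mathrm{cyc}}|_{\mu_\ell}):G_\Q\to C_{d/d'}\times(\Z/\ell\Z)^\times$ gains an extra independent component; this enlarges the effective acting group on the $(c,t)$-pairs beyond the cyclic image of $(\Z/\ell\Z)^\times$ and thus strictly decreases $b(\pi,\phi)$. The careful independence bookkeeping — essentially reducing any such $\phi$ via Lemma \ref{lem:bphi-comparison} to a pair $(\pi',\phi')$ whose $b$-value is $\le\gcd(d,\ell-1)$ — is the crux of the argument, and it yields $b_T(G,\Q)=\gcd(d,\ell-1)$.
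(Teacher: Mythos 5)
Your proposal is correct and follows essentially the same route as the paper: $b_M$ via the wreath-product example, restriction to $N\supset C_\ell^d$, reduction via Theorem \ref{thm:bt-simple} and Lemma \ref{lem:bphi-comparison} to surjective $\phi$ factoring through $\Gal(\Q(\mu_\ell)/\Q)$, and the free-orbit count giving $d/d'$ orbits, maximized at $\gcd(d,\ell-1)$. The only difference is presentational: the paper passes directly to the single maximal $\phi$ (the degree-$\gcd(d,\ell-1)$ subfield of $\Q(\mu_\ell)$), so the ``independence bookkeeping'' you defer in your final paragraph is already fully discharged by the two cited results and needs no further argument.
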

\begin{proof}
	By Example \ref{Example:wreath-product}, we have $b_M(G, \Q) = b_M(C_{\ell}, \Q) = 1$.  We now compute $b_T(G, \Q)$. Firstly, we only need to consider those $N\supset C_{\ell}^{d}$, since $C_{\ell}^d$ is the normal closure of any minimal index element. By Theorem \ref{thm:bt-simple} and Lemma \ref{lem:bphi-comparison}, it suffices to consider $\phi: G_{\Q} \to \Z/ \gcd(d, \ell-1) \Z$ correspond to the unique subfield of $\Q(\mu_{\ell})$ with degree $\gcd(d, \ell-1) = \prod_i p_i^{s_i}$. We now compute $b(\pi, \phi)$ where $\pi$ is the unique surjection from $G$ to $\Z/\prod_i p_i^{s_i} \Z$. We use T\"urkelli's original definition, Remark \ref{rmk:we-simplify}, in computing $b(\pi, \phi)$. It is easy to count that the number of conjugacy classes in $N$ with minimal index is $|\mathcal{C}_{min}(N)| = (\ell-1)\gcd(d, \ell-1)$. The action from $G_{\Q}$ factors through $\Z/\prod_i p_i^{s_i} \Z$, therefore it is enough to consider the action from the generator of $\Z/\prod_i p_i^{s_i} \Z$. The orbit length for each class is exactly $\ell-1$, therefore the number of orbits is exactly $\gcd(d, \ell-1) = \prod_i p_i^{s_i}$. 
\end{proof}

 We next give the computation towards showing Theorem \ref{thm:example-rad-1}
\begin{lemma}\label{lem:Cl-wr-Cl-1}
	Let $\ell$ be an odd prime and $m|\ell-1$ with $m>2$, $G = C_{\ell} \wr C_{m}$ and $N = C_{\ell}^{m}$. Let $\phi$ corresponds to the unique $C_m$ subfield contained in $\Q(\mu_{\ell})$. We then have
	\begin{equation}
		b(\pi, \phi) \gg b_M( G^{\text{rad}}, \Q).
	\end{equation}
	For $m = \ell-1$ and $\ell\ge 5$, 
	\begin{equation}
		b(\pi, \phi) > b_M( G^{\text{rad}}, \Q).
	\end{equation}
\end{lemma}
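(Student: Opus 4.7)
The plan is to compute both quantities by Burnside and compare leading terms. By Definition~\ref{def:phi}, $b(\pi, \phi)$ counts orbits of $\mathcal{S}_{\min}(N^{\text{rad}}) = N \setminus \{e\}$ under the $\phi$-twisted action of $G(\pi, \phi)$, where $N = C_\ell^m \cong \F_\ell^m$. Because $N$ is abelian, an element $(n \sigma^k, a) \in G(\pi, \phi)$ acts on $\vec y \in N$ by $\vec y \mapsto a \cdot \sigma^{-k}(\vec y)$, depending only on $(k, a)$; the constraint $k = \phi^*(a)$, with $\phi^*: (\Z/\ell)^\times \twoheadrightarrow C_m$ the quotient by the unique index-$m$ subgroup $H$, means the effective acting group is cyclic of order $\ell - 1$, namely the graph of $\phi^*$ inside $C_m \times (\Z/\ell)^\times$.

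First I would establish, by examining the fixed-point equations $a\, y_{i+k} = y_i$ coordinate-wise, the Burnside formula
\begin{equation*}
b(\pi, \phi) = \frac{1}{\ell - 1}\sum_{a \in (\Z/\ell)^\times} F(\phi^*(a), a), \qquad F(k, a) = \begin{cases} \ell^{\gcd(m, k)} - 1 & \text{if } a^{m/\gcd(m,k)} \equiv 1 \pmod{\ell},\\ 0 & \text{otherwise.}\end{cases}
\end{equation*}
The $a = 1$ term alone gives the clean lower bound $b(\pi, \phi) \geq (\ell^m - 1)/(\ell - 1) = 1 + \ell + \cdots + \ell^{m-1}$.

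Next I would bound $b_M(G^{\text{rad}}, \Q)$ from above by splitting conjugacy classes of $G = N \rtimes C_m$ according to their image in $C_m$. Classes lying in $N$ contribute $\frac{1}{m(\ell-1)}\sum_{k, a} F(k, a)$ via Burnside applied to the \emph{uncoupled} product $C_m \times (\Z/\ell)^\times$ acting by cyclic shift and scalar, since the $G_\Q$-cyclotomic action on order-$\ell$ elements is scalar only. Classes of the form $[(\vec b, \sigma^k)]$ with $k \neq 0$ are parametrized by $V/(1-\sigma^k)V$ modulo $\sigma$-orbits, with total cardinality at most $\sum_{d \mid m, d < m} \varphi(m/d)\ell^d$, of leading order $\ell^{m/p}$ where $p$ is the smallest prime dividing $m$.

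Comparing, the $a = 1$ contribution is $(\ell^m - 1)/(\ell-1)$ in $b(\pi, \phi)$ but only $(\ell^m - 1)/(m(\ell - 1))$ in $b_M$, while all other contributions on both sides are lower-order in $\ell^m$; this gives $b(\pi, \phi) \geq m \cdot b_M + O(\ell^{m/p})$, establishing $b(\pi, \phi) \gg b_M$ for $m > 2$. For $m = \ell - 1$ and $\ell \geq 5$, the sum defining $b(\pi, \phi)$ collapses to the necklace-counting expression $\tfrac{1}{\ell-1}\sum_{d \mid \ell - 1}\varphi((\ell-1)/d)\ell^d$, and a direct numerical comparison with the explicit contribution from classes in $G \setminus N$ (taking into account that the $G_\Q$-power action sends $\sigma^k$ to $\sigma^{ak}$ and mixes different $k$-strata) yields the strict inequality. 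The main obstacle will be controlling the non-$N$ contribution to $b_M$ tightly enough at the small cases $\ell = 5, 7$, where the crude upper bound $\sum_{d \mid m, d<m} \varphi(m/d)\ell^d$ is far from sharp and must be refined via a careful analysis of $\sigma$-orbits on each $V/(1-\sigma^k)V$ combined with the explicit cyclotomic power action on the resulting invariants.
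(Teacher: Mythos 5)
Your proposal follows essentially the same route as the paper's proof: Burnside's lemma applied to $X=N\setminus\{e\}$ under the coupled group (the graph of $\phi^*$ inside $C_m\times(\Z/\ell\Z)^\times$) for $b(\pi,\phi)$, versus the uncoupled product $C_m\times(\Z/\ell\Z)^\times$ for the in-$N$ contribution to $b_M(G^{\text{rad}},\Q)$, with the identity element supplying the dominant terms $(\ell^m-1)/(\ell-1)$ against $(\ell^m-1)/(m(\ell-1))$ and the conjugacy classes outside $N$ treated as a lower-order error. Two remarks: your bound $\sum_{d\mid m,\,d<m}\varphi(m/d)\ell^{d}$ on the classes outside $N$ is actually the more careful one (the paper's count $(m-1)\cdot\ell$ rests on the description ``conjugate iff equal $\sigma$-part and equal coordinate sum,'' which is valid only when $\sigma^k$ generates $C_m$, though both bounds are $O(\ell^{m/2})$ and hence harmless for $m>2$); and your concern about $\ell=5,7$ is unfounded, since already at $\ell=5$, $m=4$ the crude estimates give $b_M\le 44+35=79$ while $b(\pi,\phi)\ge(5^4-1)/4=156$, and the gap only widens from there.
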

\begin{proof}
The conjugacy classes of $C_{\ell} \wr C_{\ell-1}$ come in two types: contained in $N$ and outside of $N$. Within $N$, the class represented by $(a_1, \cdots, a_{\ell-1}) \rtimes e $ contains all rotations of $a_i$, i.e. , $(a_i, a_{i+1}, \cdots, a_{\ell-1}, a_1, \cdots, a_{i-1}) \rtimes e$ for certain $i$. Outside $N$, we have $(a_1, \cdots, a_{\ell-1}) \rtimes \sigma$ conjugate to $(b_1, \cdots, b_{\ell-1}) \rtimes \tau$ if and only if $\tau = \sigma$ and $\sum_i a_i = \sum _i b_i$. 
	
We first compute $b_M(G^{\text{rad}}, \Q)$. We first count the number of $G_{\Q}$-orbits within $N$. Notice that the conjugation of $G$ on $N$ purely comes from $G/N = C_{\ell-1}$ and all nontrivial elements in $N$ have order $\ell$, it then suffices to count the orbits for $G/N \times \Gal(\Q(\mu_{\ell})/\Q)$ acting on $X = N\backslash \{ e\}$ where $(x,y) \cdot n = x^{-1}\cdot  n^{\chi_{cyc}(y)} \cdot x$. By Burnside's Lemma, the number of orbits is
\begin{equation}
|X/ (G/N \times \Gal(\Q(\mu_{\ell})/\Q)) |	=  \frac{1}{|C_{\ell-1} \times C_{\ell}^{\times}|}\sum_{g\in C_{\ell-1} \times C_{\ell}^{\times}} |X^g|,
\end{equation}
Let $g = (r, s)$, then $X^g$ corresponds to the non-trivial eigenvectors of $r$ with eigenvalue $s$. If $r$ generates $C_{\ell-1}$, then as a linear operator $r$ satisfies $r^{\ell-1}-1 = \prod_{\lambda \in \F_{\ell}^{\times}} (r- \lambda) = 0$, which shows that every scalar is an eigenvalue and each eigenvalue has one dimensional eigenspace. Thus we compute that $|X^g|$ for $(r, s)$ is $\ell-1$ for this case. Similarly, when $r$ has order smaller than $\ell-1$, the operator $r$ has $(\ell-1)/ \ord(r)$ identical invariant spaces with dimension $\ord(r)$, within which all $\ord(r)$-th roots of unity in $\F_{\ell}^{\times}$ are eigenvalues with one dimensional eigenspace. Therefore if $s^{\ord(r)} = 1$, we obtain $|X^g| = \ell^{(\ell-1)/\ord(r)} -1$. Now we compute the number of orbits within $N$ is
\begin{equation}
 \frac{1}{(\ell-1)^2} \sum_{r \in C_{\ell-1}} \ord(r) \cdot \big(\ell^{(\ell-1)/\ord(r)}-1 \big)
		\le  \frac{1}{(\ell-1)^2} \Big( \ell^{\ell-1}-1 + (\ell-2)(\ell-1)\cdot \ell^{(\ell-1)/2} \Big),
\end{equation}
with the leading term comes from $g=e$. Notice the total number of classes outside $N$ is $(\ell-2)\cdot \ell$, we then have
\begin{equation}
	b_M(G^{\text{rad}}, \Q)\le  \frac{1}{(\ell-1)^2} \Big( \ell^{\ell-1}-1\Big) + \frac{\ell-2}{\ell-1}\cdot \ell^{(\ell-1)/2} + (\ell-2)\cdot \ell.
\end{equation}

Now we compute $b(\pi, \phi)$ where $\phi: G_{\Q} \to C_{\ell-1}$ corresponds to $\Q(\mu_{\ell})/\Q$. By Burnside's Lemma, the number of orbits is only the sum over $(r,r)$ in previous computation:
\begin{equation}
b(\pi, \phi) =   \frac{1}{\ell-1} \sum_{r \in C_{\ell-1}} \big(\ell^{(\ell-1)/\ord(r)}-1 \big) \ge \frac{|X^e| + (\ell-2)(\ell-1)}{\ell-1}  =  \frac{\ell^{\ell-1}-1}{\ell-1} + (\ell-2). 
\end{equation}
It is checked that when $\ell$ is large enough (i.e. $\ell \ge 5$) we have
$$b(\pi, \phi)> b_M(G^{\text{rad}}, \Q).$$	

For $m|\ell-1$ and $G = C_{\ell} \wr C_m$, we can similarly compute $b_M(G^{\text{rad}}, \Q)$ and $b(\pi, \phi)$ where $\phi: G_{\Q} \to C_m$ corresponds to the unique $C_m$ quotient of $\Q(\mu_{\ell})$. The number of conjugacy classes outside $N$ is $(m-1)\cdot \ell$. For elements inside $N$, we consider $C_m \times \Gal(\Q(\mu_{\ell})/\Q)$ acting on $X = N\backslash e$ to determine its contribution to $b_M(G^{\text{rad}}, \Q)$. Thus by Burnside's Lemma, we have
\begin{equation}
	\begin{aligned}
			b_M(G^{\text{rad}}, \Q) \le &(m-1)\cdot \ell + \frac{1}{(\ell-1)m} \sum_{r\in C_m} \ord(r)\cdot (\ell^{m/\ord(r)}-1)\\
		 \le &  \frac{\ell^{m}-1}{(\ell-1)m} + \frac{m-1}{\ell-1} \cdot \ell^{m/2} +  (m-1)\cdot \ell.
	\end{aligned}
\end{equation}
To compute $b(\pi, \phi)$, it suffices to consider $\Gal(\Q(\mu_{\ell})/\Q)$ acting on the same set $X$ with the same action that $x\cdot n =x^{-1}\cdot n^{\chi_{cyc}(x)} \cdot x$. The orbit number is at least 
\begin{equation}
	b(\pi, \phi) =  \frac{1}{\ell-1} \sum_{r\in C_{m}} \Big( \ell^{m/\ord(r)} -1  \Big) \ge \frac{\ell^m -1}{\ell-1} + \frac{m-1}{\ell-1} \cdot (\ell-1) = \frac{\ell^m -1}{\ell-1} + (m-1).
\end{equation}
Therefore for $m>2$, we have $b(\pi, \phi)  \gg b_M(G^{\text{rad}}, \Q)$.
\end{proof}

 We finally give the computation towards showing Theorem \ref{thm:example-rad-2}.
\begin{lemma}\label{lem:Cl2-wr-Cl}
	Let $\ell$ be an odd prime, $G = C_{\ell^2} \wr C_{\ell}$ and $N= C_{\ell^2}^{\ell}$. For $\ell\ge 3$, and $\phi$ corresponds to the unique $C_{\ell}$-extension only ramified at $\ell$ over $\Q$, we have
	$$b(\pi, \phi) > b_M(G^{\text{rad}}, \Q). $$
\end{lemma}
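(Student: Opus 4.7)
The plan is to compute $b(\pi,\phi)$ and $b_M(G^{\text{rad}},\Q)$ via two Burnside calculations, paralleling the proof of Lemma \ref{lem:Cl-wr-Cl-1}, and to exhibit the factor-of-$\ell$ gap between the effective group sizes that drives the inequality. Writing $G = N\rtimes C_\ell$ with $N = C_{\ell^2}^\ell$, I first observe that because $N$ is abelian, conjugation in $G$ acts on $N$ through $G/N = C_\ell$ by cyclic shift of coordinates, so for $(x,y)\in G(\pi,\phi)$ the $\phi$-twisted action $g\mapsto x^{-1}g^y x$ reduces to $\sigma^{-\phi(y)}(g^y)$, depending only on $y$. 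Since $N$ has exponent $\ell^2$ and $\phi$ factors through $(\Z/\ell^2\Z)^\times = C_{\ell-1}\times C_\ell$, this action factors through $(\Z/\ell^2\Z)^\times$, with the Sylow $\ell$-subgroup generated by $1+\ell$.

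To compute $b(\pi,\phi)$ I run Burnside over $\bar y = y_p y_q \in (\Z/\ell^2\Z)^\times$, where $y_p$ is the Sylow $\ell$-part and $y_q$ the $C_{\ell-1}$-part. If $y_q\neq 1$, the fixed-point equation collapses (either directly or after $\ell$-fold iteration through the cycle-closure $y_p^\ell = 1$) to $(y_q-1)g = 0$; since $y_q-1$ is a unit mod $\ell^2$, this forces $g=0$. If $y_q = 1$ and $y_p\neq 1$, the iteration gives $y_p^\ell g_1 = g_1$, which is automatic because $(1+\ell j)^\ell\equiv 1\pmod{\ell^2}$, so $g$ is parametrized by $g_1$, giving $\ell^2$ fixed elements. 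Combined with the identity contribution $\ell^{2\ell}-1$, Burnside yields
\begin{equation*}
b(\pi,\phi) \;=\; \frac{(\ell^{2\ell}-1)+(\ell-1)(\ell^2-1)}{\ell(\ell-1)}.
\end{equation*}

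I next compute $b_M(G^{\text{rad}},\Q)$ by splitting the conjugacy classes of $G\setminus\{e\}$ into those inside $N$ and those outside. For the $N$-part the acting group is now the direct product $(\Z/\ell^2\Z)^\times\times C_\ell$ on $N\setminus\{0\}$; the Burnside sum picks up two regimes absent in $b(\pi,\phi)$: with trivial shift and $y_p\neq 1$, the equation $\ell j\cdot g_i = 0$ restricts each component to $\ell\Z/\ell^2\Z$, giving $\ell^\ell$ fixed $g$; and with nontrivial shift and $y_p\in C_\ell$, the same cycle-closure gives $\ell^2$ fixed $g$. For classes outside $N$, parametrized by $(\bar s,\sigma)\in C_{\ell^2}\times(C_\ell\setminus\{e\})$ via the norm $\bar s = \sum a_i$, the identity $(a,\sigma)^y = (y\bar s,\sigma^y)$ and Burnside over $y\in (\Z/\ell^2\Z)^\times$ yield exactly $2\ell-1$ orbits, since only $y\equiv 1\pmod\ell$ give $\sigma^y = \sigma$. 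Altogether,
\begin{equation*}
b_M(G^{\text{rad}},\Q) \;=\; \frac{(\ell^{2\ell}-1)+(\ell-1)(\ell^\ell-1)+\ell(\ell-1)(\ell^2-1)}{\ell^2(\ell-1)} \;+\; (2\ell-1).
\end{equation*}

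Subtracting and simplifying, the difference collapses to
\begin{equation*}
b(\pi,\phi) - b_M(G^{\text{rad}},\Q) \;=\; \ell^{\ell-2}(\ell^\ell-1) - (2\ell-1),
\end{equation*}
which is strictly positive for every $\ell\ge 3$ (already $3\cdot 26 - 5 = 73$ at $\ell = 3$, and the first term grows exponentially in $\ell$). The main obstacle is the Burnside bookkeeping for the $N$-part of $b_M$: the three fixed-point regimes (trivial $\bar y$; $y_p\in C_\ell$ with trivial shift; $y_p\in C_\ell$ with nontrivial shift) must be separated carefully, and the factor-of-$\ell$ gap between the denominators $\ell(\ell-1)$ for $b(\pi,\phi)$ and $\ell^2(\ell-1)$ for the $N$-part of $b_M$ must be traced to the clean algebraic cancellation in the numerator; once the action analysis is correct, the rest is routine simplification.
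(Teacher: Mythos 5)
Your proposal is correct and follows the same skeleton as the paper's proof (Burnside's lemma applied to the two actions, with conjugacy classes split into those inside and outside $N$, and the decisive gap coming from the denominators $\ell(\ell-1)$ versus $\ell^2(\ell-1)$ on the dominant term $\ell^{2\ell}-1$). The difference is one of precision: the paper only \emph{bounds} both sides --- it lower-bounds $b(\pi,\phi)$ by the identity contribution $(\ell^{2\ell}-1)/(\ell(\ell-1))$ alone and upper-bounds the outside-$N$ contribution to $b_M$ by the raw class count $(\ell-1)\ell^2$ without quotienting by the cyclotomic action --- whereas you compute both quantities exactly, obtaining the closed-form difference $\ell^{\ell-2}(\ell^\ell-1)-(2\ell-1)$. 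Your exact values check out (at $\ell=3$: $b(\pi,\phi)=124$, $b_M=46+5=51$, difference $73$), and your middle term $(\ell-1)(\ell^\ell-1)$ for the inside-$N$ Burnside sum is the correct count ($\ell-1$ non-identity scalars $s\equiv 1\bmod\ell$ with trivial shift), where the paper writes $\ell\cdot(\ell^\ell-1)$ --- a harmless overcount there since it is used only as an upper bound, but your version is the one that makes the orbit count an integer. The exact computation buys a clean verification (integrality, the explicit value $2\ell-1$ for the outer classes) at the cost of the extra bookkeeping you describe; the paper's cruder bounds suffice for the stated inequality with less work.
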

\begin{proof}
	Similarly with Lemma \ref{lem:Cl-wr-Cl-1}, there are two types of conjugacy classes of $C_{\ell^2}\wr C_{\ell}$. The description of these classes also follow exactly the same rule. We focus on those contained in $N$, since the number of conjugacy classes outside $N$ is bounded from above by $(\ell-1)\ell^2$.
	
	We first compute $b_M(G^{\text{rad}}, \Q)$. The cyclotomic action from $G_{\Q}$ on conjugacy classes within $N$ is from $\Gal(\Q(\mu_{\ell^2})/\Q)$. We apply Burnside's Lemma where $X = C_{\ell^2}^{\ell} \backslash \{e \}$ acted by $C_{\ell} \times \Gal(\Q(\mu_{\ell^2})/\Q)$. The action from $C_{\ell}$ is rotation and the action from $\Gal(\Q(\mu_{\ell^2})/\Q) \simeq \C_{\ell^2}^{\times}$ is scalar multiplication. We have $|X^e| = \ell^{2\ell} -1$. If $r = 0$ then for $g =(r, s)$ we have $|X^g| = \ell^{\ell} -1$ if $s \equiv 1 \mod \ell$ and $|X^g| = 0$ if not. If $r\neq 0$ generates $C_{\ell}$ and $s = 1$, then $X^g$ are all the constant vectors, therefore $|X^g| = \ell^2-1$. If $r\neq 0$ and $s \neq 0$, then $X^g$ contains scalar multiplications of $(1, s, \cdots, s^{\ell-1})$ if $s^{\ell} = 1$ (i.e., $s\equiv 1\mod \ell$) and $X^g$ is empty if $s^{\ell}\neq 1$. Therefore summing up all terms, we obtain that the number of orbits within $N$ is
	\begin{equation}	
 \frac{1}{\ell^2(\ell-1)} \Big(\ell^{2\ell}-1 + \ell\cdot(\ell^{\ell}-1) + (\ell-1) \cdot \ell  \cdot (\ell^2-1)\Big).
	\end{equation}

    Now we compute $b(\pi, \phi)$ where $\phi: G_{\Q} \to C_{\ell}$ be corresponding to the unique degree $\ell$ sub-extension $F/\Q$ in $\Q(\mu_{\ell^2})/\Q$. To count the number of orbits with Burnside's Lemma, we have $X = N\backslash \{ e\}$ acted by $\Gal(\Q(\mu_{\ell^2})/\Q)$. Notice that the number of orbits is at least 
    \begin{equation}
    	\frac{|X^e|}{\ell(\ell-1)}= \frac{\ell^{2\ell}-1}{\ell(\ell-1)}. 
    \end{equation}
    
    When $\ell\ge 3$, we have
$$b(\pi, \phi) > b_M(G^{\text{rad}}, \Q).$$
\end{proof}

\begin{remark}\label{rmk:b=b}
	We remark that by a simple group theoretic consideration: in Lemma \ref{lem:Cl-wr-Cl-1}, 
	\begin{equation}
		b(\pi, \phi)= b_M(C_{\ell}, \Q(\phi)),
	\end{equation}
	where $\Q(\phi)$ is the unique $\Z/m\Z$-quotient of $\Q(\mu_{\ell})$. And in Lemma \ref{lem:Cl2-wr-Cl},
	\begin{equation}
		b(\pi, \phi)= b_{M}(C_{\ell^2}^{\text{rad}}, \Q(\phi)).
	\end{equation}
	where $\Q(\phi)$ is the unique $\Z/\ell\Z$-quotient of $\Q(\mu_{\ell^2})$.
\end{remark}

We now provide a couple examples with explicit numbers being computed. 

\begin{example}\label{exm:C3wrC4-bt}
Let $G = C_3\wr C_4$ and consider $\text{rad}$. There is a unique $C_2$ quotient of $G$. For each $\phi: \Gal(\Z/d\Z)^{\times} \to \Z/2\Z$, $\pi$ is unique, so we have
	\begin{itemize}
			\item 
		$\Q(i)$: $b(\pi,\phi) = 17$,
		\item 
		$\Q(\sqrt{3})$: $b(\pi, \phi) = 17$,
		\item 
	$\Q(\mu_3)$: $b(\pi, \phi) = 29$,
	\item 
	$\Q$: $b(\pi, \phi) = 19$.
	\end{itemize}
	Therefore 
	$$b_T(G^{\text{rad}}, \Q) = 29, \quad \quad b_M(G^{\text{rad}}, \Q) = 19.$$
	Since both $\Q(\mu_3)$, $\Q(i)$ and $\Q(\sqrt{3})$ cannot be embedded to a $C_4$-extension, by Conjecture \ref{conj:refined-malle}, we expect $b(G^{\text{rad}}, \Q) = b_M(G^{\text{rad}}, \Q)$. 
\end{example}

We now make two remarks for Example \ref{exm:C3wrC4-bt}. Firstly, we can see that as the intermediate field becomes larger, the $b$-constant can become bigger or smaller. This demonstrate that the comparison and simplification we did in Theorem \ref{thm:bt-simple} is optimal in general. Secondly, the largest $b$-constant appears when the intermediate field is $\Q(\mu_3)$ and we encounter failure in lifting this extension to a $G$-extension. This means that for product of ramified primes, there will also be counterexamples against $b_T$ due to lifting problem. 

It is conjectured in \cite[Conjecture 5.1]{koymans2023malle} that $b(\pi, \phi)$ is correct for $N_{Q,\pi, \phi}(G^{\text{rad}},X)$ for nilpotent groups. We now give the following example showing that counterexamples due to lifting problem can also appear when we restrict to $p$-groups.
\begin{example}\label{exa:koymans-pagano}
Let $G = C_4\wr C_4$. This group has exponent $16$ and $G^{ab} = C_4\times C_4$. There are altogether $26$ $b(\pi, \phi)$ to look at. Moreover, for each $\phi$, there are possibly more than one $\pi$ that we need to look at and they may correspond to different $b$-value, e.g., let $\phi$ correspond to $\Q(\mu_{16})$, there are three surjections from $G$ to $C_2\times C_4$, 
\begin{equation}
b(\pi_1, \phi)= 21,\quad \quad b(\pi_2, \phi)= 23, \quad\quad b(\pi_3, \phi)= 23.
\end{equation}
The largest $b$ is
\begin{equation}
    b(\pi, \phi)= 79, \quad \text{where} \quad \Q(\phi) = \Q(i), \quad \Ker(\pi) = C_4^2\rtimes C_2 \subset C_4\wr C_4,
\end{equation}
Notice that $\Q(i)$ is totally imaginary, therefore cannot be embedded into a $C_4$-extension, this means that $N_{Q,\pi,\phi}(G^{\text{rad}},X)=0$ for this pair. 
\end{example}

\subsection{Field Counting: Number Fields}
In this subsection, we give statements on $b(G, \Q)$ and $b(G^{rad}, \Q)$ in Theorem \ref{thm:example-1}, Theorem \ref{thm:example-rad-1} and Theorem \ref{thm:example-rad-2}. 

\begin{lemma}
Let $\ell$ be an odd prime number and $d = \prod_i p_i^{r_i} \neq 2$ where $p_i$ are all prime numbers. Let $G = C_{\ell} \wr C_{d}\subset S_{\ell d}$ and $\gcd(q, |G|) =1$. Denote $\gcd(d, \ell-1) = \prod_i p_i^{s_i}$. Let $s=\text{val}_2(\ell-1) -1$ when $\text{val}_2(d) > \text{val}_2(\ell-1)$ and $s = 0$ otherwise. Then
	$$b(G, \Q) = \prod_{i, r_i = s_i} p_i^{s_i} \cdot 2^s .$$
\end{lemma}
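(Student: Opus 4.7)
The plan is to split $N_\Q(G,X)$ according to the intersection $L := K \cap \Q(\mu_\ell)$ of each $G$-extension $K/\Q$ with the ambient cyclotomic field, and to identify the maximum $[L:\Q]$ that can actually occur. Because $G^{\mathrm{ab}} = C_\ell \times C_d$ and $C_\ell$ is coprime to $\ell-1$, one sees that $K\cap\Q(\mu_\ell) = M\cap\Q(\mu_\ell)$ where $M = K^{C_\ell^d}$ is the $C_d$-base of the tower (any cyclotomic subfield of $K$ is abelian over $\Q$ and lands in the $C_d$-factor of $K^{\mathrm{ab}}$). Consequently the problem reduces to determining which subfields of $\Q(\mu_\ell)$ embed into some $C_d$-extension of $\Q$, i.e. an abelian embedding problem of the type discussed in Section \ref{sec:embed}.

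For each admissible $L\subset \Q(\mu_\ell)$ of degree $d''$, the standard inductive wreath-product decomposition $N_\Q(G,X) = \tfrac{1}{d}\sum_M N_M(C_\ell, X/|\disc M|^\ell)$ together with Wright's theorem for abelian $C_\ell$-extensions of $M$ gives an asymptotic contribution of order $\Theta(X^{1/(\ell-1)}\log^{d''-1} X)$: the log exponent $d''-1$ is exactly the orbit count for $\Gal(M(\mu_\ell)/M) = C_{(\ell-1)/d''}$ acting on $C_\ell\setminus\{e\}$, and the Dirichlet sum over $M$'s with fixed intersection $L$ converges since its abscissa $\ell/(\ell-1)$ strictly exceeds $1/a(C_d)$ whenever $d\ge 2$. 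Thus $b(G,\Q)$ equals the maximum $[L:\Q]$ ranging over admissible $L$.

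By CRT this maximum decomposes prime-by-prime, reducing to: for each $p_i\mid d$, does the $C_{p_i^{s_i''}}$-subfield of $\Q(\mu_\ell)$ embed in a $C_{p_i^{r_i}}$-extension of $\Q$? For odd $p_i$ a direct local analysis at $\ell$ suffices: the inertia of the cyclotomic character at $\ell$ is $\mu_{\ell-1}$, whose image in any $C_{p_i^{r_i}}$-character is forced into the unique $C_{p_i^{s_i}}\subset C_{p_i^{r_i}}$, and the algebra of the projection $C_{p_i^{r_i}}\twoheadrightarrow C_{p_i^{s_i''}}$ kills this image unless $s_i''=0$ or $r_i = s_i$. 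Hence the maximal admissible $p_i$-part is $1$ when $r_i > s_i$ and $p_i^{s_i}$ when $r_i = s_i$, yielding the odd-prime contribution $\prod_{i,r_i=s_i}p_i^{s_i}$. For $p=2$ with $v_2(d)\le v_2(\ell-1)$ the same argument contributes $2^{v_2(d)}$.

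The interesting case is $p = 2$ with $v_2(d) > v_2(\ell-1) = t$, where the naive local argument would rule out every non-trivial $2$-part. Here the classical Grunwald–Wang exception at $2$ intervenes: it produces a Wang-twisted $C_{2^{r_2}}$-character of $G_\Q$ whose $C_{2^{t-1}}$-quotient cuts out the $C_{2^{t-1}}$-subfield of $\Q(\mu_\ell)$, at the cost of prescribed deviation from the cyclotomic character at $2$ and $\infty$, while the full $C_{2^t}$-subfield remains non-liftable. Carrying out this construction explicitly, and verifying that the resulting $C_d$-extension produces a genuine $G$-extension with the desired intersection, gives the factor $2^s = 2^{t-1}$ and combines with the odd-prime product to yield the claimed formula. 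The matching upper bound for $N_\Q(G,X)$ is then obtained by summing the finitely many contributions indexed by subfields $L\subset\Q(\mu_\ell)$, using an $\ell$-torsion input on $C_d$-extensions that is available precisely for $d\ne 2$; the Wang-exception construction at $p=2$ is the only genuinely delicate step and is the main obstacle of the argument.
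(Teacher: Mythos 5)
Your overall route is the paper's: reduce to $b(G,\Q)=\max_F\,[F\cap\Q(\mu_\ell):\Q]$ over $C_d$-extensions $F/\Q$ via the inductive counting result of \cite{ALOWW} together with Wright's theorem, and then decide, prime by prime, which subfields $M_n\subset\Q(\mu_\ell)$ embed into a $C_d$-extension by checking local solvability at $v=\ell$ and $v=\infty$. The odd-prime analysis and the case $\mathrm{val}_2(d)\le\mathrm{val}_2(\ell-1)$ agree with the paper (at $p=2$ you should also record the archimedean condition when $M_n$ is totally imaginary, but it is satisfiable exactly in that range).

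The step that fails is the appeal to the Grunwald--Wang exception when $\mathrm{val}_2(d)>\mathrm{val}_2(\ell-1)=:t$. What you call the ``naive local argument'' is an honest local obstruction at $\ell$, and no global phenomenon can override it: the restriction to $G_{\Q_\ell}$ of any global solution of the embedding problem is a local solution, so unsolvability at a single place is final. The Grunwald--Wang special case points in the opposite direction (it obstructs gluing compatible local characters into a global one; it never produces a global character violating a local constraint), and over $\Q$ one is not even in the special case, since $\Sha^1(\Q,\mu_{2^r})=0$, as the paper records around Theorem \ref{thm:nice-k}. Concretely, if $\chi:G_\Q\to C_{2^{r_2}}$ is any character with $r_2>t$, then $\chi(I_\ell)$ is cyclic of order dividing $2^{t}$, hence a proper subgroup of $C_{2^{r_2}}$, and its image under $C_{2^{r_2}}\to C_{2^{t-1}}$ is proper whenever $t\ge2$; so the $C_{2^{t-1}}$-quotient of $\chi$ cannot cut out the degree-$2^{t-1}$ subfield of $\Q(\mu_\ell)$, which is totally ramified at $\ell$. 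The ``Wang-twisted'' character you posit does not exist.

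Carrying your own local analysis to its end therefore yields a trivial $2$-part ($2^0$, not $2^{t-1}$) whenever $\mathrm{val}_2(d)>\mathrm{val}_2(\ell-1)$. For $t=1$ this agrees with the stated formula since then $2^s=1$; for $t\ge2$ it does not. For instance, for $\ell=5$ and $d=8$, inertia at $5$ in any $C_8$-extension of $\Q$ has order dividing $4$ and hence dies in the $C_2$-quotient, so no $C_8$-extension contains $\Q(\sqrt{5})$ and $\max_F[F\cap\Q(\mu_5):\Q]=1$, whereas the formula asserts the value $2$. So the gap is not repairable within your framework: the local criterion you correctly set up is incompatible with the factor $2^s$ in the statement. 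The same tension is present in the paper's own proof, whose embeddability conditions 1) and 2) force $t_2=0$ when $r_2>u_2$ but which nevertheless assigns $t_2=s_2-1$ there.
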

\begin{proof}[Proof of Theorem \ref{thm:example-1}, over $\Q$]
	By \cite[Corollary 1.6]{ALOWW} for $d>2$ and $G = C_{\ell} \wr C_d$ the inequality is satisfied as
	$$\frac{1}{2} + \frac{p}{d(p-1)} < \frac{\ell}{\ell-1},$$
	where $p$ is the minimal prime divisor of $d$. Then we have
	\begin{equation}
		b(G, \Q) = \max_{F/\Q, \Gal(F/\Q) = C_d} b(C_{\ell}, F).
	\end{equation}
	For $G = C_{\ell}$, by \cite{Wri89}, $b(F, C_{\ell}) = b_M(F, C_{\ell}) = [F\cap \Q(\mu_{\ell}):\Q]$. 
	
	Recall the notation $\gcd(d, \ell-1) = \prod_i p_i^{s_i}$ and $d = \prod_i p_i^{r_i} \neq 2$, and denote $\ell-1 = \prod_i p_i^{u_i}$, that is, $s_i = \min\{ r_i, u_i \}$. Since $\Q(\mu_{\ell})$ is cyclic over $\Q$, for each $n| \gcd(d, \ell-1)$, there exists a unique cyclotomic subfield $M = M_n\subset \Q(\mu_{\ell})$ that is only ramified at $\ell$ with degree $n$ over $\Q$. Given $n = \prod_i p_i^{t_i}$ with $0\le t_i\le s_i$, it follows from Theorem \ref{thm:embedding-number-field} that $M_n$ can be embedded into a $C_d$-extension if and only if 1) $\ell \equiv 1 \mod p_i^{r_i}$ for each $p_i|n$, and 2) if $M_n$ is totally imaginary, then $\text{val}_2(d) = \text{val}_2(n)$. Notice that $\ell \equiv 1 \mod p_i^{u_i}$, the first condition amounts to saying that $u_i\ge r_i$ whenever $t_i>0$, equivalently, $s_i = r_i$. For the second condition, $M_n$ is totally imaginary iff $n\nmid (\ell-1)/2$ iff $\text{val}_2(n) = \text{val}_2(\ell-1)$. Thus in this case, we require $\text{val}_2(d) = \text{val}_2(n) = \text{val}_2(\ell-1)$, i.e., $r_i = t_i = u_i$. Therefore the maximal $n$ where $M_n$ can be embedded into a $C_d$-extension can be described by specifying $t_i$: at odd primes, if $u_i \ge r_i$ (i.e., $r_i = s_i$), then let $t_i = s_i = r_i$, otherwise $0$; at $p=2$, if $r_i = u_i$, then we let $t_i = r_i = u_i = s_i$, if $r_i<u_i$, then we let $t_i = s_i = r_i$, if $r_i>u_i$, then we let $t_i = s_i-1 = u_i-1$. It then follows that $b(T, \Q)$ is this particular $n$, which is $\prod_{r_i = s_i } p_i^{s_i} \cdot 2^{s}$ where $s=\text{val}_2(\ell-1) -1$ only when $\text{val}_2(d) > \text{val}_2(\ell-1)$.
\end{proof}

Next we compute a lower bound for $b(G^{\text{rad}}, \Q)$ for $G = C_{\ell} \wr C_{d}$ and $C_{\ell^2} \wr C_{\ell}$.
\begin{lemma}
For $G = C_{\ell} \wr C_{d}\subset S_{\ell d}$ with respect to $N= C_{\ell}^{d}$ and $G = C_{\ell^2} \wr C_{\ell}\subset S_{\ell^3}$ with respect to $N = C_{\ell^2}^{\ell}$. We show that $$b(G^{\text{rad}}, \Q) \ge b(\pi,\phi),$$
where $\phi$ correponds to the unique cyclotomic $C_d$ subfield of $\Q(\mu_{\ell})$ and $C_{\ell}$-subfield of $\Q(\mu_{\ell^2})$.
\end{lemma}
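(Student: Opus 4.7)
The plan is to establish the lower bound by exhibiting $\gg X \log^{b(\pi,\phi)-1} X$ distinct $G$-extensions of $\Q$ with $\mathrm{rad} \le X$. I will realize each such extension as the Galois closure over $\Q$ of a relative $T$-extension $L/F$, where $F = \Q(\phi)$ is the specified cyclotomic field and $T = C_\ell$ (respectively $T = C_{\ell^2}$). The payoff of this construction is that counting abelian $T$-extensions of the fixed base $F$ is classical, and by Remark \ref{rmk:b=b} the $b$-constant appearing in that count is exactly $b(\pi,\phi)$ in both cases.

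First, I would show that a positive proportion of relative $T$-extensions $L/F$ have Galois closure $\tilde{L}/\Q$ with $\Gal(\tilde{L}/\Q) \cong G$. Via the natural action of $\Gal(F/\Q)$ on the Galois conjugates $\{L^{\sigma}\}$, one gets an embedding $\Gal(\tilde{L}/\Q) \hookrightarrow T \wr \Gal(F/\Q) = G$, and its image is full $G$ precisely when the conjugate extensions are linearly disjoint over $F$. Extensions failing this disjointness come from proper subgroups of $G$ surjecting onto $\Gal(F/\Q)$; by M\"obius inversion over such subgroups, the generic locus still contains a positive-density subfamily (and in fact the degenerate loci contribute strictly smaller powers of $\log$).

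Second, I would compare $\mathrm{rad}(\tilde{L})$ with the $F$-radical of $L/F$. Any rational prime $p$ ramified in $\tilde{L}$ either divides $\mathrm{rad}(F)$ (a fixed finite set) or lies below a prime $\mathfrak{p}$ of $F$ ramified in $L/F$; since $N\mathfrak{p} \ge p$, this yields
\begin{equation*}
    \mathrm{rad}(\tilde{L}) \le \mathrm{rad}(F) \cdot \Nm_{F/\Q}\bigl(\mathrm{rad}_F(L/F)\bigr).
\end{equation*}
Consequently it suffices to count relative $T$-extensions $L/F$ with $\Nm_{F/\Q}(\mathrm{rad}_F(L/F)) \le X/\mathrm{rad}(F)$. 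Applying Wright's theorem \cite{Wri89} for abelian extensions over $F$ (counted by norm of the relative radical, a standard variant of the discriminant count), one obtains an asymptotic of the form $c Y \log^{b_M(T^{\mathrm{rad}}, F)-1} Y$; by Remark \ref{rmk:b=b} this exponent equals $b(\pi,\phi)$. Setting $Y = X/\mathrm{rad}(F)$ and combining with the genericity step yields the desired lower bound $b(G^{\mathrm{rad}}, \Q) \ge b(\pi,\phi)$.

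The step I expect to be most delicate is the genericity assertion when $G = C_{\ell^2} \wr C_\ell$, where the subgroup lattice of $G$ is richer and one must rule out configurations such as relative $C_{\ell^2}$-extensions whose conjugates span only a proper $\Gal(F/\Q)$-submodule of $C_{\ell^2}^\ell$, or those that factor through a proper sub-wreath structure; a careful M\"obius inversion over maximal proper subgroups of $G$ that still surject onto $\Gal(F/\Q)$ handles this, but the bookkeeping must track both the $\ell$-rank and cyclic structure simultaneously. For $G = C_\ell \wr C_d$ the analogous argument is essentially the construction used in Kl\"uners' original counterexample and transfers directly.
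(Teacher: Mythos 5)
There is a genuine gap, and it sits exactly at the point your argument is supposed to produce the large exponent. Your plan counts \emph{single} relative $T$-extensions $L/F$ ordered by $\Nm_{F/\Q}(\mathrm{rad}_F(L/F))$ and transfers the resulting power of $\log$ to $b(G^{\mathrm{rad}},\Q)$. The inequality $\mathrm{rad}(\tilde L)\le \mathrm{rad}(F)\cdot \Nm_{F/\Q}(\mathrm{rad}_F(L/F))$ is true and does give \emph{a} lower bound, but the exponent it produces is the Malle constant for the count you actually perform, namely the number of orbits of $T\setminus\{e\}$ under $\Gal(F(\mu_{|T|})/F)$ --- not $b(\pi,\phi)$. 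These differ by orders of magnitude: for $G=C_5\wr C_4$, $F=\Q(\mu_5)$, one has $b(\pi,\phi)=\frac{1}{4}\bigl((5^4-1)+(5^2-1)+2(5-1)\bigr)=164$, while Wright's count of $C_5$-extensions of $\Q(\mu_5)$ by the norm of the relative radical has $b=4$. The source of the discrepancy is precisely the phenomenon the lemma is meant to exploit: if $p$ splits in $F$ as $\mathfrak p_1\cdots\mathfrak p_m$ and several $\mathfrak p_i$ ramify, then $\Nm_{F/\Q}(\mathrm{rad}_F)$ charges $p^k$ while $\mathrm{rad}(\tilde L)$ charges only $p$. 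In your ordering such configurations are pushed far out and never contribute to the main term, so the family you exhibit has size only $\Theta(X\log^{b_M(T,F)-1}X)$ and cannot certify the exponent $b(\pi,\phi)$. Relatedly, you cannot invoke Remark \ref{rmk:b=b} to identify your exponent with $b(\pi,\phi)$: whatever that remark intends, it cannot be the Malle constant of the relative-radical count over $F$, as the numerical check above shows.

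To get $b(\pi,\phi)$ one must count the full collection of conjugates simultaneously --- equivalently, the lifts $\tilde\phi:G_\Q\to G$ of the fixed $\phi$, i.e.\ $N$-torsors over $F$ with $N=C_\ell^{d}$ (resp.\ $C_{\ell^2}^{\ell}$) abelian --- ordered by the product of \emph{rational} primes ramified in $\tilde L$. Then the Euler factor at a rational prime $p$ sums over all inertia elements in $N$, each weighted by $p^{-s}$ regardless of how many primes of $F$ above $p$ ramify, and the order of the pole at $s=1$ is the number of twisted orbits of $N\setminus\{e\}$ under $G(\pi,\phi)$, which is $b(\pi,\phi)$. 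This is what the paper does: it quotes the theorems of Alberts and Alberts--O'Dorney, which establish exactly this asymptotic for abelian $N$ and a fixed $\phi$, and the lower bound follows immediately (surjectivity of generic lifts onto $G$ is easy for wreath products). Your genericity and radical-comparison steps are fine in spirit but are not the obstacle; the obstacle is that the ordering you chose on the auxiliary family destroys the combinatorics that make $b(\pi,\phi)$ large.
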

\begin{proof}
By taking $T = N$, this can be translated to \cite{alberts2021harmonic,alberts2022statistics} and choosing any $\pi: G_{\Q} \to G$ such that its natural restriction $\pi: G_{\Q} \to G/N$ corresponds to the cyclotomic extension $F = \Q(\mu_{\ell})$ (respectively the unique $C_{\ell}$-subfield $F$ contained in $\Q(\mu_{\ell^2})$). Since they are wreath product, there exists $G$ extensions containing $F$. We then obtain that the number of $G$-extension containing $F$ with $\text{rad}< X$ has an asymptotic distribution with $a(G^{\text{inv}}) = 1$ and $b(\pi, \phi)$. This gives a lower bound on $b(G^{\text{rad}}, \Q)$ already. 
\end{proof}

\subsection{Field Counting: Global Function Fields}\label{ssec:counting-function-field}
Our main task in this section is to give a proof for the following lemma to prove Theorem \ref{thm:example-1}:
\begin{lemma}\label{lem:counting-function-field-Cl-wr-Cd}
	Let $\ell$ be an odd prime number and $d = \prod_i p_i^{r_i} \neq 2$ where $p_i$ are all prime numbers. Let $G = C_{\ell} \wr C_{d}\subset S_{\ell d}$ and $\gcd(q, |G|) =1$, $\Gal(\F_q(t)(\mu_{\ell})/\F_q(t)) = \Gal(\Q(\mu_{\ell})/\Q)$ and $q$ large enough comparing to $G$. Denote $\gcd(d, \ell-1) = \prod_i p_i^{s_i}$.
	$$b(G, \F_q(t)) = b_T(G, \F_q(t)) = \prod_i p_i^{s_i}.$$
\end{lemma}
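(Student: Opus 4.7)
The plan is to establish $b(G, \F_q(t)) = b_T(G, \F_q(t)) = \prod_i p_i^{s_i}$ by a group-theoretic computation of $b_T$ followed by a field-counting argument matching the true $b$ with $b_T$. First I would compute $b_T(G, \F_q(t))$: by Theorem \ref{thm:bt-simple} combined with Lemma \ref{lem:bphi-comparison}, the value of $b_T$ depends only on the image of $\Gal(\F_q(t)(\mu_d)/\F_q(t))$ in $(\Z/d\Z)^\times$ together with the $\phi$-twisted action on minimal-exponent classes. The hypothesis $\Gal(\F_q(t)(\mu_\ell)/\F_q(t)) \simeq \Gal(\Q(\mu_\ell)/\Q)$ (equivalently, $q$ is a primitive root modulo $\ell$) makes these group-theoretic inputs identical to the $\Q$-case, so the same Burnside-style computation carried out in the preceding number-field lemma yields $b_T(G, \F_q(t)) = \prod_i p_i^{s_i}$.

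Next, to identify $b(G, \F_q(t))$ with this value, I would follow the template of the preceding subsection. The inductive input \cite[Corollary 1.6]{ALOWW} reduces the problem to
$$b(G, \F_q(t)) = \max_{F/\F_q(t),\, \Gal(F/\F_q(t)) = C_d} b(C_\ell, F),$$
since the required inequality $\tfrac{1}{2} + \tfrac{p}{d(p-1)} < \tfrac{\ell}{\ell-1}$ (where $p$ is the smallest prime divisor of $d>2$) is the same one already verified over $\Q$ and is purely a group-theoretic statement. Wright's theorem in its function-field form gives $b(C_\ell, F) = [F \cap \F_q(t)(\mu_\ell) : \F_q(t)]$ for each $C_d$-base $F$. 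The crucial contrast with $\Q$ is that over $\F_q(t)$ there is no embedding obstruction for cyclotomic subfields: for every divisor $n \mid \gcd(d,\ell-1)$, the unique $C_n$-subfield $\F_{q^n}(t) \subset \F_q(t)(\mu_\ell)$ embeds into the $C_d$ constant-field extension $\F_{q^d}(t)/\F_q(t)$. Hence the maximum of $[F\cap \F_q(t)(\mu_\ell):\F_q(t)]$ across all $C_d$-extensions $F$ is attained at $F = \F_{q^d}(t)$ with value $\gcd(d,\ell-1) = \prod_i p_i^{s_i}$; the reverse inequality is automatic since any intersection of a $C_d$-extension with $\F_q(t)(\mu_\ell)$ has degree dividing $\gcd(d,\ell-1)$.

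The main obstacle is ensuring that the function-field analogs of \cite[Corollary 1.6]{ALOWW} and Wright's theorem are available in the required form. The upper half of the counting should follow from the Ellenberg--Tran--Westerland weak Malle upper bound \cite{ETW}, which is the reason for the hypothesis that $q$ be large relative to $|G|$, and the lower half from the Hurwitz-space/harmonic-analysis construction of \cite{alberts2021harmonic, alberts2022statistics} producing $G$-extensions that contain the constant base $\F_{q^d}(t)$, exactly paralleling the construction in the preceding subsection. Once these two ingredients are in hand, the argument is formally identical to the $\Q$-case except that the case analysis of which cyclotomic subfields can be embedded into a $C_d$-extension collapses trivially, since constant-field extensions of $\F_q(t)$ form a linearly ordered cyclic tower with no local-at-infinity or $p$-adic obstructions.
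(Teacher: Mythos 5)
Your overall architecture matches the paper's: compute $b_T$ group-theoretically, reduce $b(G,\F_q(t))$ to $\max_F b(C_\ell,F)$ over $C_d$-extensions $F$, and observe that over function fields the embedding obstruction disappears. Your explicit witness $F=\F_{q^d}(t)$, whose intersection with $\F_q(t)(\mu_\ell)=\F_{q^{\ell-1}}(t)$ is the degree-$\gcd(d,\ell-1)$ constant extension, is a clean and correct way to see that the maximum is attained (the paper instead invokes Theorem \ref{thm:embedding-function-field} together with Lemma \ref{lem:bphi-comparison}).

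However, there is a genuine gap in how you propose to establish the counting itself. You cannot simply cite \cite[Corollary 1.6]{ALOWW}: that is a number-field theorem, and the content of this subsection is precisely to redo the inductive argument over $\F_q(t)$. More seriously, your proposed substitute for the upper bound --- the Ellenberg--Tran--Westerland bound \cite{ETW} --- only gives $N_{\F_q(t)}(G,X)=O(X^{1/a(G)+\epsilon})$, i.e.\ the \emph{weak} Malle upper bound. That loses all information about the power of $\log X$ and therefore cannot prove the asserted $\Theta(X^{1/a}\ln^{b-1}X)$ with $b=\prod_i p_i^{s_i}$; the entire point of the lemma is to pin down $b$. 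What the paper actually does is prove a function-field version of Wright's theorem for abelian extensions over a varying base $F$ (Theorem \ref{thm:abelian-function-field}) together with a \emph{uniform} upper bound $N_F(C_\ell,X)=O(C^{2g(F)})X^{1/a}\ln^{b-1}X$, where the dependence $O(C^{2g})$ comes from the trivial bound $\ell^{2g}$ on $\lvert\Cl_F[\ell]\rvert$ and from Weil's bounds on the zeta polynomial of $F(\mu_\ell)$; one then sums over all $C_d$-extensions $F$, and the hypothesis that $q$ is large relative to $G$ is exactly what makes $\sum_F C^{g(F)}\Disc(F)^{-\ell}$ converge. For the lower bound the paper does not use the harmonic-analysis machinery of \cite{alberts2021harmonic,alberts2022statistics} (which you cite, and which is set up over number fields), but rather Wright's function-field result with a local splitting condition at a prime $v$ split in $F$, ramifying exactly one place above $v$, to force the Galois group of $L/\F_q(t)$ to be the full wreath product. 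Without these uniform-in-$F$ estimates your summation over the infinitely many $C_d$-extensions $F$ is not justified, and without a log-sensitive upper bound the claimed value of $b$ does not follow.
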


Analogous with the number field setting, for each $X$, we denote $N_{\F_q(t)}(G, X)$ to be the number of all $G$-extensions over $\F_q(t)$ with discriminant bounded by $X$. For us, in order to complete parallel statements with number fields, the discriminant of a $G$-extension $K/\F_q(t)$ is the product of local discriminant over all primes away from the chosen infinity place. We will emphasize on the differences for counting those extensions over global function fields, from that over number fields in this section, as the version over number fields is proved in \cite{ALOWW}.

\begin{remark}\label{rmk:theta-function-field}
We cannot state an asymptotic distribution for $N_{\F_q(t)}(G, q^m)$ in general over function field. Given the fact that all discriminant now is $q^m$, a precise asymptotic does not exist whenever we get the non-existence (e.g. $G = C_3$ over $\F_q(t)$). This issue persists no matter if we choose to count extensions with discriminant bounded by $q^m$ or exactly equal to $q^m$. Due to this reason, we state the corresponding Malle's conjecture over function fields in Conjecture \ref{conj:Malle-function-field} with $\Theta(\cdot)$ instead of $\sim$.
\end{remark}

Before we start the proof, we first give the following reformulation of Wright's result \cite{Wri89} for abelian extensions when the abelian group $A$ has $(|A|,q)=1$. Notice that \cite[Theorem I.3]{Wri89} only states a certain weighted partial sum for abelian extensions. We now show it implies Conjecture \ref{conj:Malle-function-field} for abelian groups $A$.
\begin{theorem}\label{thm:abelian-function-field}
	Conjecture \ref{conj:Malle-function-field} holds for abelian group $A$ when $(|A|, q) = 1$. 
\end{theorem}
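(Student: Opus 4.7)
The plan is to convert Wright's analytic formula for abelian extensions into a rational generating function in $u = q^{-s}$ and extract the coefficient asymptotics directly, thereby promoting Wright's weighted partial sum to the unweighted $\Theta$-bound required by Conjecture \ref{conj:Malle-function-field}. First, I would introduce
$$Z_A(u) := \sum_{K/\F_q(t),\, \Gal(K/\F_q(t))\simeq A} u^{\deg\disc(K)}$$
and observe that, by class field theory together with the hypothesis $(|A|,q)=1$, this generating function factors as an Euler product over places of $\F_q(t)$ in which each local factor is a polynomial in $u$ of controlled degree. Hence $Z_A(u)$ is a rational function of $u$. Wright's Dirichlet-series computation (Theorem I.3 of \cite{Wri89}) identifies its dominant singularity: it occurs at $u = q^{-1/a(A)}$ with pole of order exactly $b_M(A,\F_q(t))$, and all other singularities lie strictly further from the origin or, if on the same circle $|u| = q^{-1/a(A)}$, have strictly smaller order.

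Second, a partial-fraction decomposition gives
$$Z_A(u) = \frac{P(u)}{(1 - q\, u^{a(A)})^{b_M(A,\F_q(t))}} + R(u),$$
with $R(u)$ contributing only lower-order terms to the coefficient asymptotics. Reading off coefficients yields
$$a_m := [u^m] Z_A(u) = c(m)\, q^{m/a(A)}\, m^{b_M(A,\F_q(t))-1} + o\!\left(q^{m/a(A)} m^{b_M(A,\F_q(t))-1}\right),$$
where $c(m)$ is a bounded, nonnegative, eventually-periodic function of $m$ that is strictly positive on a set of $m$ of positive density in $\Z_{\ge 0}$. Summing and setting $X = q^M$ produces
$$N_{\F_q(t)}(A,X) = \sum_{m\le M} a_m = \Theta\!\left(X^{1/a(A)} \ln^{b_M(A,\F_q(t))-1} X\right),$$
which is exactly the statement of Conjecture \ref{conj:Malle-function-field} for $A$.

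The main technical point is to establish strict positivity of $c(m)$ on a positive-density set of $m$, which supplies the lower bound; the upper bound follows mechanically from the pole-order statement of Wright. Positivity reduces to showing that the residue of Wright's Euler product at $u = q^{-1/a(A)}$ is nonzero, and this in turn follows because each local factor at a tame place contributes nonnegatively (it counts local abelian characters of minimal discriminant weight) while at least one place realizes such a minimal-weight character. Finally, as highlighted in Remark \ref{rmk:theta-function-field}, one cannot hope for a sharp $\sim$ asymptotic in this setting: $N_{\F_q(t)}(A,X)$ is constant on each interval $[q^m, q^{m+1})$ and can vanish for infinitely many $m$ if $a(A) \nmid m$, so only the $\Theta$-statement is available even though the generating-function analysis is exact.
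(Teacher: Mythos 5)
Your route is genuinely different from the paper's. The paper never re-opens the analytic machinery: it takes Wright's Theorem I.3 purely as a black-box statement about the weighted window sums $\sum_{j=0}^{b-1} a_{q^{m+j}}q^{-j/a}$, sums that identity over $m\le M$, and observes that (up to a bounded number of nonnegative boundary terms $a_{q^{M+k}}$) the left side telescopes into $N_Q(A,q^M)\cdot\sum_j q^{-j/a}$, while the right side sums to $\Theta(q^{M/a}M^{b-1})$; nonnegativity of the $a_{q^i}$ then gives the upper bound directly and the lower bound by truncating the summation at $M-b+1$. You instead work with the generating function $Z_A(u)$ and extract coefficient asymptotics from its singularities. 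Your approach buys an explicit description of the individual coefficients $a_m$ (including the periodic oscillation), which the paper does not need here but essentially carries out later for $C_\ell$ in the proof of Lemma 3.12; the paper's approach buys a proof that uses only the literal statement of Wright's theorem and no contour or partial-fraction analysis.

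Two points in your write-up need repair. First, rationality of $Z_A(u)$ does not follow merely from its being an Euler product with polynomial local factors (an infinite product of polynomials need not be rational); you would need to invoke class field theory to express the series as a finite combination of Hecke $L$-functions in $u$, each rational by Weil, or else settle for the weaker (and sufficient) statement that $Z_A(u)$ is meromorphic on a disc of radius $q^{-1/a+\delta}$ with poles only at $u=q^{-1/a}\mu_a^k$. Moreover, this analytic structure is an input to Wright's Theorem I.3 rather than its statement, so ``citing Theorem I.3'' does not by itself deliver the pole locations and orders: you must extract them from Wright's proof. Second, your description of the singularities on the circle $|u|=q^{-1/a}$ is internally inconsistent with your own $c(m)$: if all singularities other than $u=q^{-1/a}$ on that circle had strictly smaller order, then $c(m)$ would be asymptotically constant, not genuinely oscillating. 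In fact the poles at $u=q^{-1/a}\mu_a^k$ generically all have order $b$, and it is precisely their interference that forces $a_m=0$ when $a\nmid m$ (e.g.\ $A=C_3$, $a=2$). The argument survives because after summing over $m\le M$ the oscillating contributions average out and the $k=0$ residue, which is positive, dominates --- this is the same nonnegativity-plus-positive-main-term mechanism the paper uses --- but as written the singularity analysis and the positivity claim for $c(m)$ do not fit together.
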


\begin{proof}
Let $Q$ be a global function field and $a_{q^m}$ denotes the number of $A$-extensions over $k$ with discriminant exactly equal $q^m$. Recall that Wright has proved the statement that
	\begin{equation}
		\sum_{j=0}^{b-1} a_{q^{m+j}} \cdot q^{-j/a} \sim C (q^{m})^{1/a} \cdot P_{b-1}(m) + O(q^{m})^{1/a-\delta},
	\end{equation}
	where $a = a(A)$ and $b = b_M(A,Q)$, and $P_{b-1}$ is a polynomial with degree $b-1$ and $\delta>0$ is a small positive number. Summing the equation from $1$ to $m$ and rearranging the terms, we obtain
	\begin{equation}
		N_Q(A, q^m)\cdot \sum_j q^{-j/a} + \sum_{1\le j\le b-1} \sum_{1\le k\le j} a_{q^{m+k}} \cdot q^{-(b-j)/a}  \sim C \sum_{1\le i\le m} (q^{i})^{1/a} \cdot P_{b-1}(i) + O(q^{m})^{1/a-\delta}.
	\end{equation}
	Notice that the summation over $i$ on the right hand side is also $q^{m/a} \cdot \tilde{P}_{b-1}(m)$ with $\tilde{P}$ a degree $b-1$ polynomial. It immediately follows that $N_Q(A, q^m) \le C_2 q^{m/a} m^{b-1}$ for some $C_2$. 
	
	To see the lower bound, notice that
	\begin{equation}
		N_Q(A, q^m)\cdot \sum_j q^{-j/a} \ge C \sum_{1\le i\le m-b+1} (q^{i})^{1/a} \cdot P_{b-1}(i) + O(q^{m})^{1/a-\delta}.
	\end{equation}
	which implies that there exists $C_1$ such that
	\begin{equation}
		N_Q(A, q^m) \ge C_1 q^{m/a} m^{b-1}.
	\end{equation}
\end{proof}

To determine $b(G, \F_q(t))$ for $G = C_{\ell} \wr C_d\subset S_{\ell d}$, we follow the similar idea in \cite{ALOWW}. The plan is to apply Theorem \ref{thm:abelian-function-field} to $G = C_{\ell}$ over any $C_d$-extension  $F/\F_q(t)$. For each $F$ and corresponding $\phi$, by Remark \ref{rmk:b=b}, we have $b(\pi, \phi) = b_M(C_{\ell}, F) = b(C_{\ell}, F)$. Therefore it suffices to prove that: 
\begin{equation}\label{eqn:function-field-induction}
	b(G, \F_q(t)) = \max_{F, \Gal(F/\F_q(t)) = C_d} b(C_{\ell}, F) ,
\end{equation}
and
\begin{equation}\label{eqn:embedding}
	\max_{F, \Gal(F/\F_q(t)) = C_d}  b(C_{\ell}, F)  = \max_{\phi} \quad b(\pi, \phi) = b_T(G, \F_q(t)),
\end{equation}
where $\phi$ varies over all $G_{\F_q(t)} \to C_d$. In (\ref{eqn:embedding}), the first equality is tautological, and the second equality follows from Theorem \ref{thm:embedding-function-field}: indeed, all abelian quotient $G/N$ with $\ind(N) = \ind(G)$ must contain $N = C_{\ell}^d$, and if $\bar{\phi}$ with smaller quotient can be lifted to $\phi$ with $C_d$-quotient, then by Lemma \ref{lem:bphi-comparison}, it suffices to check all $C_d$-quotient. 
\begin{remark}
Over number fields, the second equality in (\ref{eqn:embedding}) does not hold. This is the exact reason leading to the difference of $b$-constants over function fields and number fields in this paper. See Section \ref{sec:embed} and Theorem \ref{thm:embedding-number-field} for more discussion. 
\end{remark}

Finally, it remains to prove (\ref{eqn:function-field-induction}), that is, the analogue of the inductive argument in \cite{ALOWW} over function field. We sketch the idea as following. For each $C_d$-extension, we need to count $N_F(C_{\ell},X$ over a general $C_d$-extension $F/\F_q(t)$ with the predicted $a$ and $b$-constants, in addition to an upper bound on $N_F(C_{\ell},F)$ with a uniform dependence on $\Disc(F)$ that is small. Finally, adding up all $N_F(C_{\ell}, X)$ with the uniform dependence to show the total counting satisfy (\ref{eqn:function-field-induction}). The uniform dependence on counting abelian extensions are exactly characterized by the size of the $\ell$-torsion in class groups of $F$. Over function fields, this connection still holds. The $\ell$-torsion of the class group for a function field $F/\F_q(t)$ has a trivial bound $\ell^{2g}$ where $g$ is the genus.
\begin{remark}
It is well-known that $\ell$-torsion in class groups over number fields has a trivial bound $\Disc(k)^{1/2+\epsilon}$. Although it is conjectured to be bounded by $\Disc(k)^{\epsilon}$, current best results provide a bound at $\Disc(k)^{1/2-\delta}$ with some small $\delta>0$ in general. See \cite{ALOWW} for relevant references. The discriminant of a function field is linearly related to genus $g$ via Riemann Hurwitz formula. When $q$ becomes larger and larger, the trivial bound over function field behaves better and better and is less than $\Disc(k)^{\epsilon}$ for arbitrary small $\epsilon$, as long as $q$ is large enough with respect to $\epsilon$. This means that most inequalities in \cite{ALOWW} can be met when $q$ is large enough. This leads to many more cases of Malle's conjecture being proved over function field. 
\end{remark}

Before we give the proof, let's first give a summary of class field theory over function fields. For us, given a finite function field $F/\F_q(t)$, we obtain the ring of integers $O_F$ that contains all elements that are integral over all finite places (not including the places above $\infty$), which is the integral closure of $\F_q[t]$ inside $F$. We define the ideal class group of $F$ to be the class group of the ring $O_F$. It is a classical theorem that the ideal class group is also finite for function fields. But unlike number fields, $F$ can have many infinite unramified extensions, which is just the constant finite field extension. Relating the ideal class group with id\'ele class group $C_F$, we obtain the following 

\begin{tikzcd}	
	&\prod_v O_v^{\times} \times F_{\infty}^{\times}  \arrow{r}&   C_F:= \prod_v^{\prime} (F_v^{\times}, O_v^{\times}) \times F_{\infty}^{\times} \arrow{r}/ F^{\times} &  \Cl_F \arrow{r} & 0,.
\end{tikzcd}

\noindent It follows that
\begin{theorem}
	The ideal class group of a global function field is isomorphic to the Galois group of the maximal abelian unramified extension that is split at all places above infinity. 
\end{theorem}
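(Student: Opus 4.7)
The plan is to deduce the theorem from global class field theory together with the exact sequence displayed just before the statement. By global class field theory for the global function field $F$, the Artin reciprocity map furnishes a canonical continuous homomorphism
\begin{equation*}
	\psi_F\colon C_F \longrightarrow \Gal(F^{ab}/F)
\end{equation*}
with dense image, and its composition with the projection $\Gal(F^{ab}/F) \to \Gal(L/F)$ for any finite abelian extension $L/F$ is surjective with kernel $\Nm_{L/F}(C_L)$. This reduces the problem to a purely local-global bookkeeping: identifying which subgroup of $C_F$ corresponds, under $\psi_F$, to the maximal abelian extension $H/F$ that is unramified at all finite places and completely split at all places above $\infty$.

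First I would recall the standard compatibility with local reciprocity: for each place $v$, the local Artin map $F_v^{\times} \to \Gal(F_v^{ab}/F_v)$ sends $O_v^{\times}$ onto the inertia subgroup when $v$ is finite, and sends the image of $F_\infty^{\times}$ onto the decomposition group at $v$ when $v\mid\infty$ (the archimedean places of a function field being the places above the chosen infinity place, whose local completions are local fields whose unit groups correspond to inertia and whose full multiplicative groups correspond to the full decomposition). Consequently, by the translation between conditions on a class field and conditions on a subgroup of $C_F$, the abelian extension $L/F$ corresponding to an open subgroup $U\subset C_F$ of finite index is unramified at a finite place $v$ exactly when $O_v^{\times} \subset U$, and totally split at a place $v\mid \infty$ exactly when the image of $F_{\infty,v}^{\times}$ lies in $U$.

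Next I would apply this to the distinguished subgroup
\begin{equation*}
	U_0 := \left( \prod_{v\nmid \infty} O_v^{\times} \times F_{\infty}^{\times} \right)\cdot F^{\times} / F^{\times}\subset C_F,
\end{equation*}
which is exactly the image of $\prod_v O_v^{\times} \times F_{\infty}^{\times}$ in $C_F$ in the notation of the exact sequence displayed before the theorem. By the local characterisation above, the abelian extension $H$ corresponding to $U_0$ is precisely the maximal abelian extension of $F$ which is unramified at every finite place and split at every infinite place, and reciprocity gives
\begin{equation*}
	C_F / U_0 \;\xrightarrow{\;\sim\;}\; \Gal(H/F).
\end{equation*}
Finally, the exact sequence
\begin{equation*}
	\prod_{v\nmid\infty} O_v^{\times} \times F_{\infty}^{\times} \longrightarrow C_F \longrightarrow \Cl_F \longrightarrow 0
\end{equation*}
identifies $C_F/U_0$ with $\Cl_F$, so composing gives the desired isomorphism $\Cl_F \cong \Gal(H/F)$.

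The only genuine subtlety, and what I expect to be the main obstacle to present cleanly, is the finiteness/continuity issue: unlike in the number field case there is no archimedean connected component to quotient out, but one must still argue that $U_0$ is open of finite index so that reciprocity produces a \emph{finite} extension $H/F$; this follows from finiteness of $\Cl_F$ recalled in the paragraph preceding the theorem, which makes $U_0$ open of finite index in $C_F$ and legitimises the application of the existence theorem. Once that is in place, the chain of identifications above completes the proof.
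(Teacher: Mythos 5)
Your proposal is correct and follows essentially the same route as the paper, which derives the statement directly from the displayed exact sequence relating $\prod_v O_v^{\times}\times F_{\infty}^{\times}$, the id\`ele class group $C_F$, and $\Cl_F$, together with the standard dictionary of class field theory (unramified at finite $v$ $\Leftrightarrow$ $O_v^{\times}\subset U$, split at $v\mid\infty$ $\Leftrightarrow$ $F_v^{\times}\subset U$). Your added remark that finiteness of $\Cl_F$ is what makes $U_0$ open of finite index, so that the class field is a finite extension, is exactly the point the paper flags by contrasting with the infinite constant-field tower.
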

\noindent On the other hand, by \cite[Proposition 14.1]{rosen2013number}, with $S = S_{\infty}$, we can relate the class group to the Jacobian of the curve $C_F$ corresponding to $F$ via
\begin{equation}
	 \Cl^0_{F}\to \Cl_F \to \Z/(d/i)\Z \to 0, 
\end{equation}
where $d = \gcd_{v|\infty} \{ \deg(v) \}$ and $i = \gcd_{v} \{ \deg(v) \}$, and $\Cl^0_{F} = \text{Jac}(C_F)(\F_q)$ is the Picard group. This means that we can bound $\lvert\Cl_F[\ell]\rvert\le O(\ell^{2g})$ by the structure of the Jacobian. 
\begin{lemma}\label{lem:uniformity-function-field}
	For any finite function field $F/\F_q(t)$, we have $\lvert \Cl_F[\ell]\rvert\le O_{\ell}(\ell^{2g})$.
\end{lemma}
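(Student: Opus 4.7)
The plan is to combine the exact sequence relating the ideal class group to the degree-zero class group (Picard group of the Jacobian) with the standard bound on $\ell$-torsion of abelian varieties. The bound on $|\mathcal{C}_F[\ell]|$ will come almost entirely from $\text{Jac}(C_F)(\F_q)[\ell]$, with the cyclic cokernel contributing only a constant depending on $\ell$.

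First I would invoke the exact sequence displayed just before the lemma, namely
\begin{equation*}
\Cl^0_F \longrightarrow \Cl_F \longrightarrow \Z/(d/i)\Z \longrightarrow 0,
\end{equation*}
together with the identification $\Cl^0_F \cong \text{Jac}(C_F)(\F_q)$. Taking $\ell$-torsion and using the snake lemma (or just the elementary fact that $\ell$-torsion of an extension is bounded by the product of the $\ell$-torsions of kernel and cokernel), we obtain
\begin{equation*}
|\Cl_F[\ell]| \;\le\; |\text{Jac}(C_F)(\F_q)[\ell]| \cdot |(\Z/(d/i)\Z)[\ell]|.
\end{equation*}
The second factor is at most $\ell$, and in any case depends only on $\ell$ (not on $F$), so it is absorbed into the implicit constant.

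Next I would bound the first factor. Since $\text{Jac}(C_F)$ is an abelian variety of dimension $g$ over $\F_q$, and $(\ell,q)=1$, the classical description of $\ell$-torsion of abelian varieties gives
\begin{equation*}
\text{Jac}(C_F)(\overline{\F_q})[\ell] \;\cong\; (\Z/\ell\Z)^{2g}.
\end{equation*}
Hence the $\F_q$-rational $\ell$-torsion is a subgroup of $(\Z/\ell\Z)^{2g}$ and has order at most $\ell^{2g}$. Combining these two estimates yields $|\Cl_F[\ell]| \le O_\ell(\ell^{2g})$, as required.

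The proof involves no real obstacle; the only subtle point is making sure one works with the correct class group (the one attached to the ring of $S_\infty$-integers, compatible with the earlier id\`ele-theoretic description) so that the exact sequence relating $\Cl_F$ to the Picard group of $C_F$ genuinely applies. Since this is set up in the paragraph immediately preceding the lemma, nothing further is needed.
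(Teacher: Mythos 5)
Your proof is correct and follows the same route the paper sketches in the paragraph preceding the lemma: the exact sequence $\Cl^0_F \to \Cl_F \to \Z/(d/i)\Z \to 0$ with $\Cl^0_F \cong \mathrm{Jac}(C_F)(\F_q)$, the bound $|\mathrm{Jac}(C_F)(\F_q)[\ell]| \le \ell^{2g}$ coming from $\mathrm{Jac}(C_F)(\overline{\F}_q)[\ell] \cong (\Z/\ell\Z)^{2g}$ for $(\ell,q)=1$, and absorbing the cyclic cokernel into the $O_\ell$ constant. Nothing to add.
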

\begin{lemma}
	Let $Q$ be a finite extension over $\F_q(t)$ and $\ell$ be a prime that is relatively prime to $q$. We then have
	$$N_Q(C_{\ell}, X) = O(C(\ell)^{2g}) X^{1/a(C_{\ell})} \ln^{b(C_{\ell}, Q)-1} X,$$
	where the constant $C(\ell)$ only depends on $\ell$.
\end{lemma}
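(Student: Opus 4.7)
The plan is to apply Wright's function-field abelian asymptotic, Theorem~\ref{thm:abelian-function-field}, to $A = C_\ell$ over $Q$, and then track the dependence of the leading constant on the genus $g$ of $Q$ via class field theory, ultimately reducing the genus-dependence to $|\Cl_Q[\ell]|$ and applying Lemma~\ref{lem:uniformity-function-field}.

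First, since $\gcd(\ell,q)=1$, every $C_\ell$-extension of $Q$ is tamely ramified, and its discriminant has the form $\prod_{v\in\mathrm{Ram}(K)}|v|^{\ell-1}$. By class field theory, $C_\ell$-extensions ramified inside a finite set $S$ of finite places correspond, up to scaling by $(\Z/\ell\Z)^\times$, to surjections from the ray class group $\Cl_Q^S$ onto $C_\ell$. A standard exact sequence gives
$$|\Cl_Q^S[\ell]|\le |\Cl_Q[\ell]|\cdot\prod_{v\in S}|\mathcal{O}_v^\times/(\mathcal{O}_v^\times)^\ell|,$$
with each local factor bounded by $\ell^2$. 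This reduces the counting to summing such cardinalities over $S$ with $\prod_{v\in S}|v|^{\ell-1}\le X$.

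Second, I would assemble the resulting generating Dirichlet series $D(s):=\sum_K\disc(K/Q)^{-s}$, which factors as $|\Cl_Q[\ell]|$ times an Euler product whose local factor at $v$ has the form $1+O(\ell^2)|v|^{-(\ell-1)s}$. By Theorem~\ref{thm:abelian-function-field} applied over $Q$, this series satisfies the asymptotic $N_Q(C_\ell,X) = c_Q \, X^{1/(\ell-1)} \log^{b(C_\ell,Q)-1}X \cdot(1+o(1))$; the key point is that $c_Q$ is expressible in terms of $|\Cl_Q[\ell]|$ and of the value of the Euler product at $s=1/(\ell-1)$. The first factor is bounded by $O_\ell(\ell^{2g})$ directly by Lemma~\ref{lem:uniformity-function-field}. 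The second is bounded by powers of $\zeta_Q$ at a fixed real point in its region of absolute convergence, and via the Weil bound $|\mathrm{Jac}(C_Q)(\F_q)|\le(1+\sqrt{q})^{2g}$, applied through the Euler product for $\zeta_Q$, this factor is at most $C(\ell,q)^{2g}$. Multiplying, the leading constant is at most $C(\ell)^{2g}$.

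The main obstacle is avoiding spurious genus factors in two places: from the residue extraction when converting the Dirichlet series pole into a partial sum, and from the precise matching of log-exponents to $b(C_\ell,Q)-1$ rather than the cruder $\ell-1$ one would get from the naive bound $|\Cl_Q^S[\ell]|\le|\Cl_Q[\ell]|\cdot\ell^{|S|}$. The first is handled by working with an inequality form of the Tauberian extraction in which smoothing constants depend only on $\ell$, not on $Q$. The second requires the same orbit-theoretic refinement underlying $b_M(C_\ell,Q)$ in Wright's theorem; being a purely Galois-theoretic statement over $Q$, it is already built into Theorem~\ref{thm:abelian-function-field} and thus imports no additional $g$-dependence, so all remaining genus-dependence is captured by the factor $|\Cl_Q[\ell]|\le C(\ell)^{2g}$.
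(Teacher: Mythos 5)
Your proposal is correct and follows essentially the same route as the paper: class field theory reduces the count to $|\Cl_Q[\ell]|\cdot|{\Hom}_{\le X}(\prod_v \mathcal{O}_v^{\times},C_{\ell})|$, the torsion is bounded by $\ell^{2g}$ via the Jacobian, the local data are packaged into an Euler product compared with $\zeta_{Q(\mu_{\ell})}((\ell-1)s)^{b(C_{\ell},Q)}$, and the genus-uniformity of both the polynomial coefficients and the error term in the coefficient extraction (Cauchy's formula in $u=q^{-s}$, i.e. your ``inequality form of the Tauberian extraction'') is controlled by the Weil bounds $|\alpha_i|=\sqrt{q}$. One small correction: the $q$-uniform constant $C(\ell)^{2g}$ comes from bounding the numerator $\prod_i(1-\alpha_i q^{-as})$ and its first $b$ derivatives at the pole and on the shifted contour, where $|\alpha_i q^{-as}|\le q^{-1/2+O(\delta)}<1$ gives a bound of the form $O(2^{2g})$; invoking $|\mathrm{Jac}(C_Q)(\F_q)|\le(1+\sqrt q)^{2g}$ evaluates the numerator at the wrong point and introduces a spurious $q$-dependence inconsistent with the statement that $C(\ell)$ depends only on $\ell$.
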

\begin{proof}
	Denote $a = a(C_{\ell})$ and $b = b(C_{\ell}, Q)$ for short in the proof. 
	It follows from class field theory over function field that 
	$$N_k(C_{\ell},q^m) \le O(\ell^{2g(k)})\cdot |\Hom_{\le X}(\prod_{v} O_v^{\times}, C_{\ell})|,$$
	where $\Hom_{\le X}(\prod_{v} O_v^{\times}, C_{\ell})$ denotes the number of continuous homomorphisms from $\prod_v O_v^{\times}$ to $C_{\ell}$ with bounded discriminant. It has a generating series which is usually called Malle-Bhargava series. The series is an Euler product and can be compared to standard zeta functions
	\begin{equation}
		f(s):= \prod_{|v|\equiv 1\mod \ell} (1+ (\ell-1)|v|^{-(\ell-1)s}) = H_{Q}(s) \cdot \zeta_{Q(\mu_{\ell})}((\ell-1)s)^{b(C_{\ell}, F)}.
	\end{equation}
	Here $H_Q(s)$ is a holomorphic factor that is uniformly converging at $\Re(s)>1/a-\epsilon$ for some small $\epsilon>0$. Now letting $u = q^{-s}$, we define $g(u) = f(u(s)) =  \sum_{m} a_m u^m$. Since $\zeta_{Q(\mu_{\ell})}(s)$ is meromorphic except at its poles, and $H_Q(s)$ is holomorphic at $\Re(s)>1/a-\epsilon$, the complex function $g(u)$ is holomorphic within the disc $|u|< q^{-(1/(\ell-1))}$ and $g(u)/u^{m+1}$ is holomorphic everywhere in the disc except at $u=0$. We then have for small $0<\delta< \epsilon$ that
	\begin{equation}\label{eqn:Perron}
		2\pi i\cdot	a_m = \int_{|u|= q^{-1/(\ell-1) -\delta}} \dfrac{g(u)}{u^{m+1}} du = \int_{|u| = q^{-1/(\ell-1)+\delta}}  \dfrac{g(u)}{u^{m+1}} du- \sum_{u_0^{\ell-1} = q^{-1}} \Res_{u= u_0} \dfrac{g(u)}{u^{m+1}},
	\end{equation}
	by shifting the contour integration from the smaller circle to the larger circle. The only possible poles for $f(s)$ with $\Re(s) > 1/a-\epsilon$ are at $s$ where $q^{as }= q$, i.e., $s_k = 1/a+ 2\pi i/a\log q \cdot k $, therefore the possible poles for $g(u)$ are $u(s_k) = q^{-1/a} \cdot \mu_{a}^k$. 
	
	We now estimate the terms in (\ref{eqn:Perron}). The residue of $g(u)/u^{m+1}$ at $u =u(s_k)$ will serve as the main term for $a_m$. We add $-1$ since $C_1$ reversed direction in the second contour integral. 
	\begin{equation}
		\begin{aligned}
		\sum_{u_0^{\ell-1} = q^{-1}} \Res_{u= u_0} \dfrac{g(u)}{u^{m+1}} =(\ln q)^b \frac{(m+b-1)!}{(b-1)!m!} u(s_0)^{-m} \cdot \sum_{k \mod a} \Res(s=s_k) \mu_{a}^{-km},
		\end{aligned}
	\end{equation}
	where $u(s_0) = q^{-1/a}$ and $(m+b-1)!/m!$ is a polynomial of degree $b-1$ in terms of $m$. The integration can be bounded by 
	\begin{equation}
		\max_{|u| = R = q^{-1/a +\delta}} |g(u)|  \cdot R^{-(m+1)} \cdot 2\pi R = O\Big(\max_{|u| = R = q^{-1/a +\delta}} |g(u)| \Big) \cdot q^{(1/a-\delta)m},
	\end{equation}
	which is power-saving from the leading terms. The dependence on the base field $Q$ is from $\max_{|u| = R = q^{-1/a +\delta}} |g(u)|$. Here $g(u) = \zeta_{Q(\mu_{\ell})}(as)^{b}$. By Weil's Conjecture, we have
	$$\zeta_{Q(\mu_{\ell})} = \dfrac{ \prod_{1\le i\le 2g} (1 - \alpha_i q^{-s})}{(1-q^{-s})(1-q^{1-s})},$$
	where $|\alpha_i| = \sqrt{q}$ and $g$ is the genus. Therefore on $|u| = q^{1/a+\delta}$ we have
	$$|g(u)|\le O(2^{2g}).$$
	Combining above, we obtain that there exists $\epsilon>0$ such that 
	\begin{equation}
		a_m = O(q^{m/a} P(m)) + O(2^{2g} q^{m/a-\epsilon}),
	\end{equation}
 where the polynomial $P(m)$ has degree at most $b-1$ in $m$ and the constants can be understood by expanding the rational polynomial. Via an explicit computation, we have
	$$\zeta_{Q(\mu_{\ell})}(s)\sim \prod (1-\alpha_j q^{-s}) \cdot \dfrac{ s-1}{(q^{1-s}-1)(1-q^{-s})} = \sum b_i (s-1)^i \cdot \sum a_j (s-1)^j,$$
	where $a_j$ and $b_i$ serve as the coefficients for the Taylor expansion around $s = 1$. The $a_j$ does not depend on $Q$. The polynomial $P(m)$ depends on the first $b$ coefficients of $\zeta_{Q(\mu_{\ell})}$. We can compute coefficients $b_j$ via taking derivatives of the $\zeta$-polynomials directly, as an example,
	$$b_0 = \prod (1-\alpha_j q^{-1}), \quad \quad b_1 = b_0 \cdot \sum \alpha_j\ln q \cdot q^{-1} (1-\alpha_jq^{-1})^{-1}.$$
	Similarly, we can compute $b_j$ and obtain that for $0\le j \le b$
	$$b_j= O(C^g)$$
	for some constants $C$ only depending on $b$ and $g$. This finishes the proof that $a_m = O(C^g)q^{m/a} m^{b-1}$. The statement follows by adding up over all $q^m\le X$.
\end{proof}

\begin{proof}[Proof of Lemma \ref{lem:counting-function-field-Cl-wr-Cd}]
	Firstly, it is easy to compute that $b_T(G, \F_q(t)) = \gcd(d, \ell-1)$  by Theorem \ref{thm:bt-simple} by only considering $\phi$ exactly corresponding to cyclotomic subfields of $\F_q(t)(\mu_{\ell})$.
	
	Next in order to show it is the true count, it suffices to prove the equality (\ref{eqn:function-field-induction}) from previous discussion, i.e., give the inductive argument for this case. 
	
	We need to firstly show that the number of $C_{\ell}$-extensions $L/F$ with $\Gal(L/\F_q(t)) = C_{\ell} \wr C_d$ for each $C_{d}$-extension $F/\F_q(t)$ is bounded from above and below by $X^{1/a(C_{\ell})} \ln^{b(C_{\ell}, F)-1} X$. It is clear from Theorem \ref{thm:abelian-function-field} that the upper bound holds. Let $v$ be a place in $\F_q(t)$ that becomes split in $F$, and $v = \prod_i w_i$. The number of $C_{\ell}$-extensions that are ramified at $w_1$ and unramified at all other $w_i$ can be counted by \cite[Theorem 7.2]{Wri89} (together with the remarks after Theorem $7.3$), and has a leading term with the same order with the total counting. All extensions satisfying this local condition have total Galois group $C_{\ell} \wr C_d$, since the Frobenius at $v$ generate $C_{\ell}^d$ in $C_{\ell} \wr C_d$. This implies that the lower bound for $N(G, X)$ is $X^{1/a} \ln^{b-1} X$ where $b = \max_F b(C_{\ell}, F)$. 
	
	Secondly, we need to show that we can add up each counting over all $F$. With the uniformity proven in Lemma \ref{lem:uniformity-function-field}, we have that
	$$N(G, X) \le  \sum_{F} N_F(C_{\ell}, X/\Disc(F)^{\ell}) = \sum_{F} O((C)^{g(F)} \Disc(F)^{-\ell}) X^{1/a} \ln^{b-1} X,$$
	where 
	$$O\left(\sum_{F} C^{g(F)} q^{-\ell(2g-2)}\right) = O(1)$$
	as long as $q$ is large enough comparing to $\ell$. Therefore we prove $N(G, X)$ has an upper bound and lower bound with the same order
	$$a = a(C_{\ell}), \quad\quad b= \max_{F} b(C_{\ell}, F). $$
\end{proof}

\section{Embedding Cyclotomic Extensions}\label{sec:embed}
In this section, we would like to consider Problem \ref{que:embedding-cyclotomic}. Precisely, given a surjective group homomorphism $\pi: G\to B$ and a cyclotomic $B$-extension $F/Q$ (equivalently a surjective continous group homomorphism $\phi: G_{Q} \to B$), we say the embedding problem $\mathscr{E}(G_Q, \phi, \pi)$ is \emph{solvable} if there exists a continuous group homomorphism $\tilde{\phi}$ so that the following diagram commutes. We say $\mathscr{E}(G_Q, \phi, \pi)$ is \emph{properly solvable} if moreover $\tilde{\phi}$ is surjective. 
\begin{center}\label{diag:embedding-3}
	\begin{tikzcd}	
		&  &  & G_Q \arrow[ld, "\tilde{\phi}" ', dashed]\arrow[d,"\phi",two heads] &  \\		
		0\arrow{r} & \Ker(\pi) \arrow{r} & G \arrow[r, "\pi"] & B \arrow{r} & 0 \\
	\end{tikzcd}
\end{center}

We are going to see that the crucial difference between global function fields and number fields is that $\Gal(Q^{cyc}/Q)$ is projective for function fields, but not for number fields! Theorem \ref{thm:embedding-function-field} and Theorem \ref{thm:embedding-number-field} serves the purpose to give a flavor of this question on two sides. 


\subsection{Function Fields}
Over function field, all cyclotomic embedding problem has solutions, due to the fact that the constant extension is projective. However, it is not known in general that ``solvable implies properly solvable''. The existence of a proper solution of such cyclotomic embedding problem can be considered as a decorated inverse Galois problem over global function field. We now give many cases when $G$ is solvable, based on extensive results on inverse Galois problem for solvable groups. 
\begin{theorem}\label{thm:embedding-function-field}
	Let $Q$ be a global function field with maximal constant field $\F_q$. For any cyclotomic $B$-extension $\phi: G_Q\twoheadrightarrow B$ and a surjective group homomorphism $\pi: G\to B$, the embedding problem $\mathscr{E}(G_Q,\phi, \pi)$ is always solvable. If $(\Ker(\phi), q(q-1)) = 1$ and $\Ker(\phi)$ is solvable, then $\mathscr{E}(G_Q,\phi, \pi)$ is always properly solvable.
\end{theorem}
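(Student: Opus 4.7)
The argument splits into the two assertions.

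For solvability, the decisive feature of function fields is that every root of unity lies in $\bar\F_q$, so every cyclotomic extension of $Q$ sits inside $Q\cdot\bar\F_q$. Hence the given $\phi:G_Q\twoheadrightarrow B$ factors through the constant-field quotient
\[
G_Q \twoheadrightarrow \Gal(\bar\F_q\cdot Q/Q)\cong\Gal(\bar\F_q/\F_q)\cong\hat\Z,
\]
and $\hat\Z$ is the free profinite group of rank one, hence projective in the category of profinite groups. The surjection $\hat\Z\twoheadrightarrow B$ therefore lifts along $\pi:G\to B$ to a continuous $\hat\Z\to G$; precomposing with $G_Q\twoheadrightarrow\hat\Z$ yields a solution $\tilde\phi$. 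Its image is procyclic and typically proper in $G$, so this alone does not settle proper solvability.

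For proper solvability, I would start from such a $\tilde\phi_0$ and enlarge its image inside $N:=\Ker(\pi)$ by twisting, reading the coprimality and solvability hypotheses as pertaining to the finite group $N$. Fix a chief series $1=N_k\subset\cdots\subset N_0=N$ with elementary abelian quotients $V_i:=N_{i-1}/N_i$ of order prime to $q$. Inductively assume a surjective lift $\tilde\phi^{(i)}:G_Q\twoheadrightarrow G/N_i$ compatible with $\phi$ is in hand. The next embedding problem has abelian kernel $V_i$ of order prime to the characteristic, and such problems are solvable over a global function field by classical Shafarevich-style arguments: tame ramification at auxiliary primes on an affine model of $Q$ yields enough independent $V_i$-characters, because the tame fundamental group of a smooth affine curve over $\F_q$ surjects onto arbitrarily large elementary abelian groups of exponent coprime to $q$. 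The hypothesis $(|N|,q-1)=1$ guarantees that the $|V_i|$-th roots of unity required for the Kummer-type construction already lie in $\F_q$, so the new characters are built over $Q$ without enlarging the constant field and in particular without disturbing the restriction of $\phi$ to $\Gal(\bar\F_q/\F_q)$.

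The hard part is the synchronization at each inductive step: one must enlarge the image in $V_i$ to fill the entire $G/N_{i-1}$-module $V_i$ while preserving the pinned cyclotomic quotient. My plan is to choose the auxiliary ramified primes with Frobenius in carefully selected cosets of $\tilde\phi^{(i)}(G_Q)\subset G/N_i$ via Chebotarev, so that the corresponding tame ramification generators land in $G/N_{i-1}$-conjugates which collectively span the missing $V_i$-submodule, while their contribution to the constant extension is trivial by the coprimality condition. Iterating the $k$ steps assembles the desired surjective $\tilde\phi:G_Q\twoheadrightarrow G$ extending $\phi$, proving proper solvability.
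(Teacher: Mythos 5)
Your argument for solvability is exactly the paper's: every cyclotomic extension of $Q$ is a constant extension, so $\phi$ factors through $\Gal(\bar\F_q\cdot Q/Q)\cong\hat\Z$, and projectivity of $\hat\Z$ produces a (generally non-surjective) lift. That half is correct and complete. (You are also right to read the hypotheses ``$(\Ker(\phi),q(q-1))=1$ and $\Ker(\phi)$ solvable'' as conditions on the finite kernel $N=\Ker(\pi)$ of the group extension.)

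For proper solvability the paper does something much shorter than what you attempt: it simply invokes the Iwasawa--Shafarevich theorem \cite[Corollary (9.5.8)]{NSW}, which says that over a global field a solvable finite embedding problem whose kernel is solvable and of order prime to the characteristic and to $\#\mu(Q)=q-1$ admits a proper solution. The coprimality and solvability hypotheses in the theorem are precisely the hypotheses of that citation; nothing further is proved. Your proposal instead tries to reprove this citation by induction on a chief series, and here there is a genuine gap together with a sign error in the heuristics. First, the claim that $(|N|,q-1)=1$ ``guarantees that the $|V_i|$-th roots of unity required for the Kummer-type construction already lie in $\F_q$'' is backwards: $\mu_n\subset\F_q$ iff $n\mid q-1$, so coprimality to $q-1$ \emph{excludes} the relevant roots of unity from the base field (for $|V_i|>1$). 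In the standard proof the hypothesis is used in the opposite direction: it forces the minimal kernels, as Galois modules, to have no invariants/coinvariants of the wrong kind, which kills the local and $\Sha^2$-type obstructions. Second, the step you flag as ``the hard part'' --- producing a \emph{surjective} lift at a non-split minimal abelian kernel while keeping the constant-field quotient pinned --- is exactly the content of the theorem being cited, and your Chebotarev-plus-tame-ramification plan only addresses the split (semidirect product) case. As written, the second half of your proposal does not constitute a proof; the efficient route is to prove solvability as you did and then quote the Iwasawa--Shafarevich result for proper solvability.
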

\begin{proof}
	Firstly, by \cite[Corollary $(9.5.8)$]{NSW}, it suffices to show that $\mathscr{E}(G_Q,\phi, \pi)$ is solvable. We are going to show that it is always solvable. Notice that we can decompose $\phi = \phi_1\circ \phi_0$ as a composition of $\phi_0$ and $\phi_1$, therefore it suffices to show that $\mathscr{E}(G^{cyc}_Q, \phi_1, \pi)$ is solvable.
	\begin{center}\label{diag:embedding-function-field}
		\begin{tikzcd}	
			&  &  & G_Q \arrow[d,"\phi_0",two heads] &  \\		
			&  &  & G^{cyc}_Q \arrow[ld, "\tilde{\phi_1}" ', dashed]\arrow[d,"\phi_1",two heads] &  \\		
			0\arrow{r} & \Ker(\pi) \arrow{r} & G \arrow[r, "\pi"] & B \arrow{r} & 0 \\
		\end{tikzcd}
	\end{center}
The group $G^{cyc}_Q\simeq \hat{\Z}$ is a projective profinite group, therefore $\mathscr{E}(G^{cyc}_Q, \phi_1, \pi)$ is always solvable.
\end{proof}

\subsection{Number Fields}
What makes the key difference between function fields and number fields is that $G^{cyc}_{\Q}$ is no longer projective as a profinite group, therefore it is not always solvable $\mathscr{E}(G_Q,\phi, \pi)$.

We firstly give an analogue of Theorem \ref{thm:embedding-function-field}. The analogue of constant extension from function field on the number field side is the $\Z_p$-cyclotomic tower. This leads to:
\begin{theorem}
Given the embedding problem $\mathscr{E}(G_k,\phi, \pi)$ where $k$ is a number field. Assume $\phi:G_k \to B$ cut out a subfield $k(\phi)$ of the unique $\hat{\Z}$-extension contained in $k(\mu_{\infty})$, and $\Ker(\pi)$ is solvable with $(\mu(k),|\Ker(\pi)|) =1$. Then the problem $\mathscr{E}(G_{\Q},\phi, \pi)$ is properly solvable.
\end{theorem}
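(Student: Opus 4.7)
The plan is to mirror the argument for Theorem~\ref{thm:embedding-function-field}, using that although $G_k^{cyc}$ is not projective for a number field $k$, the Galois group $\Gamma := \Gal(k_\infty/k) \cong \hat{\Z}$ of the unique $\hat{\Z}$-extension $k_\infty$ inside $k(\mu_\infty)$ still is. By hypothesis the field $k(\phi)$ cut out by $\phi$ is contained in $k_\infty$, so $\phi$ factors through $\Gamma$: write $\phi = \phi_1\circ\phi_0$, where $\phi_0 : G_k \twoheadrightarrow \Gamma$ is the canonical projection and $\phi_1 : \Gamma \twoheadrightarrow B$ is the induced cyclotomic quotient. This plays exactly the role that splitting off the constant-field tower played in the function-field proof.

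Next, since $\hat{\Z}$ is projective as a profinite group, the embedding problem $\mathscr{E}(\Gamma,\phi_1,\pi)$ admits a continuous solution $\psi: \Gamma \to G$ with $\pi\circ\psi = \phi_1$. Pulling back along $\phi_0$ gives a continuous lift $\tilde{\phi} := \psi\circ\phi_0 : G_k \to G$ satisfying $\pi\circ\tilde{\phi} = \phi$, so that $\mathscr{E}(G_k,\phi,\pi)$ is (at least) solvable. Note the image $\tilde{\phi}(G_k)$ is a subgroup of $G$ which already surjects onto $B$, so the only remaining obstacle to proper solvability is to enlarge the image so that it covers $\Ker(\pi)$.

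To upgrade solvability to proper solvability, I would invoke the number field analogue of \cite[Corollary (9.5.8)]{NSW}: a finite embedding problem over a number field whose kernel is solvable and of order coprime to $|\mu(k)|$ is properly solvable whenever it is solvable. The hypotheses $\Ker(\pi)$ solvable and $(|\mu(k)|,|\Ker(\pi)|)=1$ are exactly what this theorem demands, and the coprimality rules out the only Grunwald--Wang-type anomaly (at the $2$-primary part) in which the passage from solvability to proper solvability can fail.

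The main obstacle is this final step: over function fields, upgrading a solution to a proper one is essentially formal once the kernel is solvable, but over number fields one must genuinely handle local obstructions at real and $2$-adic places. In practice I would carry this out by induction along the derived series of $\Ker(\pi)$. At each stage the problem reduces to an abelian embedding problem whose Galois cohomological obstruction lies in a finite group that can be killed by twisting the existing lift by an auxiliary abelian extension of $k$ ramified only at primes disjoint from those ramifying in $k_\infty$ and in $k(\tilde{\phi})$. The coprimality $(|\mu(k)|,|\Ker(\pi)|)=1$ guarantees that such a twisting extension exists without any root-of-unity obstruction, completing the induction.
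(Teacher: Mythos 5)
Your proposal is correct and follows essentially the same route as the paper: factor $\phi$ through $\Gal(k_\infty/k)\cong\hat{\Z}$, use projectivity of $\hat{\Z}$ to get solvability, and then pass from solvability to proper solvability via \cite[Corollary (9.5.8)]{NSW} (which is already stated for number fields, so no ``analogue'' is needed), whose hypotheses are exactly the solvability of $\Ker(\pi)$ and the coprimality $(|\mu(k)|,|\Ker(\pi)|)=1$. Your closing paragraph merely sketches the proof of that cited result and is not needed.
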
 
\begin{proof}
Firstly, by \cite[Corollary $(9.5.8)$]{NSW}, it suffices to show that $\mathscr{E}(G_Q,\phi, \pi)$ is solvable. Since $\phi$ factors through the unique $\hat{\Z}$-extension over $k$ and $\hat{\Z}$ is projective, it is always solvable. 
\end{proof}

Next, we give a criteria for $\mathscr{E}(G_\Q, \phi, \pi)$ when $G$ is abelian. It can also be generalized to some number fields. Notice that for each place $v$ of $k$, after fixing a prime above $v$ in $\bar{k}$, the map $\phi$ induces a local map $\phi_v: G_{k_v} \to G_k\to B$, and induces a local group extension $\pi_v$
\begin{equation}
    0 \to \Ker(\pi) \to \pi^{-1}(\phi_v(G_{k_v})) \to \phi_v(G_{k_v}) \to 0.
\end{equation}
We now obtain many local embedding problems $\mathscr{E}(G_{k_v},\phi_v, \pi_v)$. In most cases when $\Ker(\pi)$ is contained in the center of $G$, we have local-global principle, that is:
\begin{lemma}\label{lem:central-embedding}
Consider the embedding problem $\mathscr{E}(G_k,\phi, \pi)$ where $k$ is a global field. Assume $\Ker(\pi)$ is contained in the center of $G$. If one of the following conditions holds:
\begin{enumerate}
    \item 
    $k$ is global function field
    \item 
    $|\Ker(\pi)|$ is odd
    \item 
    Let $e$ be the exponent of $2$-primary part of $\Ker(\pi)$ and $e\nmid 8$.
    \item 
    $e|8$ and $\Gal(k(\mu_{e})/k) = D_{\mathfrak{p}}$ for some $\mathfrak{p}$ in $k$.
\end{enumerate}
then $\mathscr{E}(G_k,\phi, \pi)$ is solvable if and only if $\mathscr{E}(G_{k_v},\phi_v, \pi_v)$ is solvable for each $v$. 
\end{lemma}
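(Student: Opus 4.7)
The plan is to translate the embedding problem into Galois cohomology and then reduce the local-global principle to the vanishing of a Tate-Shafarevich group, which I would settle by Poitou-Tate duality combined with the Grunwald-Wang theorem. Set $A := \Ker(\pi)$. Since $A$ is central in $G$, the sequence $1\to A\to G \to B \to 1$ is a central extension with trivial $B$-action on $A$, and it is classified by a cohomology class $\epsilon \in H^2(B,A)$. By the standard correspondence between embedding problems with abelian kernel and $H^2$-classes (see, e.g., NSW Chapter III.5), the global problem $\mathscr{E}(G_k,\phi,\pi)$ is solvable if and only if $\phi^*\epsilon = 0$ in $H^2(G_k,A)$, and similarly $\mathscr{E}(G_{k_v},\phi_v,\pi_v)$ is solvable iff $\phi_v^*\epsilon \in H^2(G_{k_v},A)$ vanishes. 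The forward direction of the lemma is therefore immediate by functoriality of $H^2$.

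For the converse, assume all local obstructions vanish; then $\phi^*\epsilon$ lies in the kernel of the localization map, i.e.\ in
\[
\Sha^2(k,A) := \Ker\Bigl( H^2(G_k,A) \longrightarrow \prod_v H^2(G_{k_v},A) \Bigr).
\]
So it suffices to show that $\Sha^2(k,A) = 0$ under each of the hypotheses (1)--(4). By the nine-term Poitou-Tate exact sequence, there is a canonical perfect duality $\Sha^2(k,A) \cong \Sha^1(k,A^*)^{\vee}$, where $A^* = \Hom(A,\mu_\infty)$ is the Cartier dual. Vanishing of $\Sha^1(k,A^*)$ for finite abelian $A^*$ is exactly the content of the Grunwald-Wang theorem, whose only obstruction is the so-called $2$-primary special case.

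Finally, I would verify that each of the four hypotheses places us outside the special case. Cases (1) and (2) are standard: over a global function field the Grunwald-Wang special case does not occur (there are no archimedean places, and the $2$-primary phenomenon is absent when the characteristic is positive and coprime to $|A|$), while if $|A|$ is odd then $A^*$ has odd order and no $2$-primary obstruction can arise. Case (3) assumes that the $2$-primary exponent $e$ of $A$ does not divide $8$; this takes the pair $(k,A^*)$ outside the modulus range in which Wang's counterexamples live. Case (4) is the most delicate: the hypothesis $\Gal(k(\mu_e)/k) = D_{\mathfrak{p}}$ for some prime $\mathfrak{p}$ says that $\mathfrak{p}$ is inert in $k(\mu_e)/k$, and the existence of such an inert prime is precisely the classical criterion ruling out the Grunwald special case for the modulus $e$.

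The main obstacle is matching the decomposition-group condition in (4) with the standard special-case description (usually phrased in terms of the square classes of $-1$, $2$, $-2$ in $k$, or equivalently in terms of the cyclicity of $\Gal(k(\mu_{2^s})/k)$); once that equivalence is established, the conclusion follows directly from the classical Grunwald-Wang theorem. A secondary subtlety is the passage through Tate duality: one must check that the Cartier dual $A^*$ is unramified outside a finite set and that the Poitou-Tate sequence applies with the correct local pairings, but these are routine once the central/abelian-kernel reformulation has been set up.
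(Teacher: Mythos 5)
Your proposal follows essentially the same route as the paper's proof: identify the obstruction with the inflation of the extension class $\epsilon\in H^2(B,\Ker(\pi))$, observe that local solvability places it in $\Sha^2(k,\Ker(\pi))$, dualize via Poitou--Tate to $\Sha^1(k,\Ker(\pi)')$ with $\Ker(\pi)'$ a product of $\mu_{n_i}$'s, and invoke the Grunwald--Wang circle of results (NSW Theorem 9.1.9) so that the only possible failure is the special case. The one step you flag as remaining --- matching hypothesis (4) to the non-special case --- is exactly what the paper disposes of by citing NSW, Remark 3, p.~528.
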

\begin{proof}
    Firstly, for central embedding problem, solvable implies properly solvable since one can twist solutions by $H^1(k, \Ker(\pi))$, i.e. abelian extensions over $k$ with Galois group a subgroup of $\Ker(\pi)$, to make a surjective lifting. 

    Secondly, by \cite[Proposition 3.5.9]{NSW}, the embedding problem is solvable if and only if $inf(\epsilon) = 0 \in H^2(k, \Ker(\pi))$ where $\epsilon \in H^2(B, \Ker(\pi))$ corresponds to the group extension $\pi$. By the assumption, we know that $inf(\epsilon) = 0 \in H^2(k_v, \ker(\pi))$. This means that $inf(\epsilon) \in \Sha^2(k, \Ker(\pi))$. By Poitou-Tate duality we know that $\Sha^2(k, \Ker(\pi)) \simeq \Sha^1(k, \Ker(\pi)')^{\vee}$. If $\Ker(\pi) \simeq \prod_i \Z/{n_i}\Z$, then $\Ker(\pi) = \prod_i \mu_{n_i}$. By \cite[Theorem 9.1.9]{NSW}, we know that $\Sha^1(k, \prod_i\mu_{n_i}) = \prod_i \Sha^1(k, \mu_{n_i}) =0$ if we are not in the special case. By \cite[Remark 3, p.528]{NSW}, non special case is equivalent to the condition that $\Gal(k(\mu_{e})/k) = D_{\mathfrak{p}}$ for some $\mathfrak{p}$ in $k$. 
\end{proof}
\begin{remark}
The common references of such a local-global principle for central kernel is \cite[Corollary 10.2]{malle1999inverse}. It only includes the case when the kernel is elementary abelian.
\end{remark}

From Lemma \ref{lem:central-embedding}, we obtain the following theorem:
\begin{theorem}\label{thm:nice-k}
Let $k$ be a number field. If one of the following condition holds:
\begin{enumerate}
    \item 
     $\Gal(k(\mu_{2^{\infty}})/k) = \hat{\Z}_2$,
    \item 
    $\Gal(k(\mu_{2^{\infty}})/k) = \hat{\Z}_2 \times \Z/2\Z$ and there exists a prime $v$ above $2$ with $D_v = \Gal(k(\mu_{2^{\infty}})/k)$.
\end{enumerate}
then $\Sha^2(k, A) =0$ for all trivial module $A$, and all central embedding problem over $k$ satisfy local-global principle. 
\end{theorem}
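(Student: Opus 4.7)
The plan is to establish the vanishing of $\Sha^2(k,A)$ for every finite trivial $G_k$-module $A$, and then to invoke Lemma \ref{lem:central-embedding} to conclude the local-global principle for any central embedding problem. By Poitou--Tate duality,
$$\Sha^2(k,A)\simeq \Sha^1(k,A^{\vee})^{\vee},$$
with $A^{\vee}=\Hom(A,\mu_{\infty})$. Writing a finite trivial module as $A\simeq \bigoplus_i \Z/n_i\Z$ gives $A^{\vee}\simeq \bigoplus_i \mu_{n_i}$, so the problem reduces to showing $\Sha^1(k,\mu_n)=0$ for every $n\geq 1$.

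Next, I would decompose $\mu_n$ into its $p$-primary pieces. For each odd prime $p$, classical Grunwald--Wang yields $\Sha^1(k,\mu_{p^r})=0$ unconditionally, so only the $2$-primary part remains. By the criterion already invoked in the proof of Lemma \ref{lem:central-embedding} (namely \cite[Remark~3, p.~528]{NSW}), the vanishing $\Sha^1(k,\mu_{2^r})=0$ is equivalent to the existence of a prime $\mathfrak{p}$ of $k$ whose decomposition group in $\Gal(k(\mu_{2^r})/k)$ is the whole group. Thus I only need to produce such a prime for every $r\geq 1$, under each of the two hypotheses of the theorem.

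Under condition~(1), $\Gal(k(\mu_{2^{\infty}})/k)\simeq \hat{\Z}_2$, so each finite layer $\Gal(k(\mu_{2^r})/k)$ is cyclic, and Chebotarev density supplies infinitely many unramified primes whose Frobenius generates that cyclic group; any such prime has $D_{\mathfrak{p}}=\Gal(k(\mu_{2^r})/k)$. Under condition~(2), the hypothesis directly gives a prime $v\mid 2$ whose decomposition subgroup inside $\Gal(k(\mu_{2^{\infty}})/k)$ is the whole profinite group, and its image in each finite quotient $\Gal(k(\mu_{2^r})/k)$ is surjective because a continuous surjection of profinite groups remains surjective on every finite quotient.

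Once $\Sha^2(k,A)=0$ is established for every finite trivial module $A$, the conclusion for central embedding problems is immediate from the argument in Lemma \ref{lem:central-embedding}: the inflation of the extension class lies in $\Sha^2(k,\Ker(\pi))$ as soon as all local obstructions vanish, and a central kernel is automatically a trivial $G_k$-module. The step I expect to require the most care is the verification under condition~(2): one must check that the single designated prime $v$ above $2$, which is necessarily ramified in the $2$-cyclotomic tower, nonetheless has full decomposition group at every finite level, and that the Grunwald--Wang criterion really is insensitive to ramification and only sees surjectivity of $D_v$ onto the relevant cyclotomic quotient.
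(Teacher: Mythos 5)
Your proposal is correct and follows essentially the same route as the paper, which derives Theorem \ref{thm:nice-k} directly from the Poitou--Tate and Grunwald--Wang argument in the proof of Lemma \ref{lem:central-embedding}: reduce $\Sha^2(k,A)$ to $\Sha^1(k,\mu_{2^r})$ and check, via the decomposition-group criterion of \cite[Remark 3, p.~528]{NSW}, that the special case is excluded under either hypothesis. Your explicit verification of the two conditions (Chebotarev in the cyclic case, surjectivity of $D_v$ onto finite quotients in the second case) matches what the paper leaves implicit.
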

Notice that latter condition just need to be checked at the smallest $r$ where $\Gal(k(\mu_{2^r})/k)\simeq C_2\times C_2$ by a Frattini argument applied to the decomposition group. Moreover the decomposition group $D_v$ is the full Galois group for one prime $v$ above $2$ holds exactly when it holds for all primes $v|2$ by Galois theory. 

Now notice that if a prime $v$ in unramified in $k(\phi)/k$, then locally $\mathscr{E}(\mu(\Q_v),\phi_v, \pi_v)$ is always solvable, we further simplify to:
\begin{theorem}\label{thm:embedding-number-field}
Let $G$ be an abelian group and $k$ be a number field satisfying Theorem \ref{thm:nice-k}. Then the problem $\mathscr{E}(G_{k},\phi, \pi)$ is properly solvable if and only if $\mathscr{E}(G_{k_v},\phi_v, \pi_v)$ is solvable at each ramified $v$. 
\end{theorem}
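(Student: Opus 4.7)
The plan is to invoke Lemma \ref{lem:central-embedding} and then trim its local hypothesis down to the ramified places. Because $G$ is abelian, $\Ker(\pi)$ is central in $G$, so the central-kernel hypothesis of Lemma \ref{lem:central-embedding} is satisfied. Theorem \ref{thm:nice-k} provides $\Sha^{2}(k,A)=0$ for every trivial $G_{k}$-module $A$; applying this with $A=\Ker(\pi)$ inside the obstruction-class argument from the proof of Lemma \ref{lem:central-embedding} shows that the global $H^{2}(k,\Ker(\pi))$-obstruction vanishes as soon as every local obstruction does. Hence $\mathscr{E}(G_{k},\phi,\pi)$ is solvable if and only if $\mathscr{E}(G_{k_{v}},\phi_{v},\pi_{v})$ is solvable at \emph{every} place $v$.

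Next I would reduce from ``all places'' to ``ramified places.'' Fix $v$ unramified in $k(\phi)/k$. If $v$ is finite, then $\phi_{v}$ factors through the maximal unramified quotient $G_{k_{v}}^{\mathrm{ur}}\cong\widehat{\Z}$; since $\widehat{\Z}$ is projective as a profinite group, the induced map $\phi_{v}^{\mathrm{ur}}$ lifts to some $\tilde{\phi}_{v}^{\mathrm{ur}}:\widehat{\Z}\to G$ satisfying $\pi\circ\tilde{\phi}_{v}^{\mathrm{ur}}=\phi_{v}^{\mathrm{ur}}$, and precomposing with $G_{k_{v}}\twoheadrightarrow G_{k_{v}}^{\mathrm{ur}}$ produces a local solution. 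If $v$ is archimedean and unramified in $\phi$, then $\phi_{v}$ is trivial and the trivial map tautologically lifts it. Combined with the preceding paragraph, this yields the equivalence ``globally solvable $\iff$ locally solvable at every ramified $v$.''

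Finally I would upgrade solvability to proper solvability via the torsor structure of lifts. The set of liftings of $\phi$ through $\pi$ is a torsor under $H^{1}(k,\Ker(\pi))=\mathrm{Hom}_{\mathrm{cts}}(G_{k},\Ker(\pi))$, and class field theory realizes arbitrarily prescribed continuous homomorphisms $G_{k}\to\Ker(\pi)$. Given any lift $\tilde{\phi}$, one successively twists by characters whose images cut out a complement of $\mathrm{Im}(\tilde{\phi})\cap\Ker(\pi)$ inside $\Ker(\pi)$ until the twisted lift is surjective onto $G$. The reverse implication---proper solvability implies local solvability at every ramified place---is immediate by restriction to decomposition subgroups. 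The one subtle point I expect is verifying that Theorem \ref{thm:nice-k}'s condition on $\Gal(k(\mu_{2^{\infty}})/k)$ genuinely kills $\Sha^{2}(k,\Ker(\pi))$ for a not-necessarily-cyclic $\Ker(\pi)\cong\prod_{i}\Z/n_{i}\Z$; via Poitou--Tate and Cartier duality this unpacks to the simultaneous vanishing of $\prod_{i}\Sha^{1}(k,\mu_{n_{i}})$, and the hypotheses in Theorem \ref{thm:nice-k} are crafted precisely to exclude the ``special case'' of \cite[Theorem 9.1.9]{NSW} for every $n_{i}$, which is the one place where care is required.
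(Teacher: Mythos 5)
Your proposal is correct and follows exactly the route the paper takes (the paper leaves the proof implicit as a consequence of Lemma \ref{lem:central-embedding}, Theorem \ref{thm:nice-k}, and the remark that unramified places pose no local obstruction): central kernel plus $\Sha^{2}(k,\Ker(\pi))=0$ gives the local-global principle, unramified places are automatically locally solvable via projectivity of $\widehat{\Z}$, and twisting by $H^{1}(k,\Ker(\pi))$ upgrades solvability to proper solvability. No substantive differences to report.
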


We do have $\Sha^1(\Q, \mu_{2^r}) = 0$ for all $r$, since $2$ has full decomposition group. Therefore central embedding problem satisfy local-global principle over $\Q$. 
\begin{example}\label{example:embedding}
Let $Q = \Q$, an odd prime $\ell$, and $d>0$. $\Sha^2(\Q, A)=0$ for all trivial modules $A$. Let $n| \gcd(d, \ell-1)$, and $B = C_n$ and $\phi: G_{\Q} \to C_n$ corresponds to the unique $C_n$-sub extension $F$ contained in $\Q(\mu_{\ell})$. By Theorem \ref{thm:embedding-number-field}, it suffices to check the local solvability by $ v= \ell$ and $v = \infty$. Notice that $(\ell, \ell-1) = 1$, the prime $\ell$ is totally tamely ramified in $F$. The local map $\phi_v$ then maps $\mu(\Q_{\ell}) = \{ \mu_{\ell-1} \} \simeq \Z/(\ell-1)\Z$ by sending the generator to generator of $C_{n}$. In order to lift $\phi$, the only way is to map the generator of $\mu(\Q_{\ell})$ to $g\in C_d$ such that $\bar{g}$ generate $C_n$ and $(\ell-1)g = 0$. Writing $d = \prod_i p_i^{r_i}$, $\ell-1 = \prod_i p_i^{u_i}$, $\gcd(d, \ell-1) = \prod_i p_i^{s_i}$ and $n = \prod_i p_i^{t_i}$. We consider all abelian groups here as a direct product of cyclic $p_i$-groups. If $\phi(g_i) = 1$ for every $p_i| n$, then we need $(\ell-1) g_i = 0$ for a generator $g_i$ in $C_d$, equivalently, this means that $u_i\ge r_i$. At $v = \infty$, if $\phi$ is totally real, i.e., $n| (\ell-1)/2$, then always solvable. If $n\nmid (\ell-1)/2$, then it suffices that $2g_i = 0$ and $\phi(g_i)$ has order $2$. This requires $2$-Sylow subgroup for $C_d$ and $C_n$ being the same.  
\end{example}

\section{Acknowledgement}
The author was partially supported by Foerster-Bernstein Fellowship at Duke University and NSF grant DMS-2201346. A significant part of this work is done during my graduate school, so I would like to thank my advisor Melanie Matchett Wood for many inspiring questions and helpful discussions on the topic, especially on understanding the statement of T\"urkelli's modification. I would like to thank Brandon Alberts, Jordan Ellenberg, J\"urgen Kl\"uners, Aaron Landesman, Robert J. Lemke Oliver, Yuan Liu and Bianca Viray for helpful conversations. I would also like to thank Peter Koymans, Aaron Landesman, Daniel Loughran, Gunter Malle, Even O'Dorney, and Tim Santens for helpful comments on a previous preprint.

\Addresses
\end{document}